\newtheorem{theorem}{Theorem}[section]
\newtheorem{lemma}[theorem]{Lemma}
\newtheorem{proposition}[theorem]{Proposition}
\newtheorem{conjecture}[theorem]{Conjecture}
\theoremstyle{definition}
\newtheorem{definition}[theorem]{Definition}
\newtheorem{remark}[theorem]{Remark}
\numberwithin{equation}{section}
\newcommand{\CC}{\mathbb C}
\newcommand{\HH}{\mathbb H}
\newcommand{\NN}{\mathbb N}
\newcommand{\cD}{\mathcal D}
\newcommand{\cA}{\mathcal A}
\newcommand{\cH}{\mathcal H}
\newcommand{\PP}{\mathbb P}
\newcommand{\QQ}{\mathbb Q}
\newcommand{\RR}{\mathbb R}
\newcommand{\ZZ}{\mathbb Z}
\newcommand{\SL}{\mathop{\mathrm {SL}}\nolimits}
\newcommand{\Sp}{\mathop{\mathrm {Sp}}\nolimits}
\newcommand{\Orth}{\mathop{\null\mathrm {O}}\nolimits}
\newcommand{\im}{\mathop{\mathrm {Im}}\nolimits}
\newcommand{\latt}[1]{{\langle{#1}\rangle}}
\newcommand{\ord}{\mathop{\mathrm {ord}}\nolimits}
\def\dim{\operatorname{dim}}
\def\det{\operatorname{det}}
\def\w{\operatorname{w}}
\newenvironment{psmallmatrix}
  {\left(\begin{smallmatrix}}
{\end{smallmatrix}\right)}
\begin{document}

\title[On the vanishing order of Jacobi forms at infinity]{On the vanishing order of Jacobi forms at infinity} 

\author{Jialin Li}

\address{School of Mathematics and Statistics, Wuhan University, Wuhan 430072, Hubei, China}

\email{jlli.math@whu.edu.cn}

\author{Haowu Wang}

\address{School of Mathematics and Statistics, Wuhan University, Wuhan 430072, Hubei, China}

\email{haowu.wangmath@whu.edu.cn}

\subjclass[2020]{11F46, 11F50, 11F55}

\date{\today}

\keywords{Jacobi forms, vanishing order, formal Fourier--Jacobi series, orthogonal modular forms}

\begin{abstract} 
In this paper, we establish two types of upper bounds on the vanishing order of Jacobi forms at infinity. The first type is for classical Jacobi forms, which is optimal in a certain sense. The second type is for Jacobi forms of lattice index. Based on this bound, we obtain a lower bound on the slope of orthogonal modular forms, and we prove that the module of symmetric formal Fourier--Jacobi series on $\Orth(m,2)$ has finite rank. 
\end{abstract}

\maketitle

\section{Introduction}
The theory of classical Jacobi forms was first developed by Eichler and Zagier \cite{EZ85}. Such forms are holomorphic functions of two variables $(\tau,z)\in \HH \times\CC$, which are modular in $\tau$ and quasi-periodic in $z$, and satisfy some boundary conditions at cusps. Later, Gritsenko \cite{Gri88} introduced Jacobi forms of lattice index as Fourier--Jacobi coefficients of modular forms for orthogonal groups $\Orth(m,2)$. In this setting, the single variable $z$ is replaced by multiple variables $\mathfrak{z}\in L\otimes\CC$, where $L$ is an even positive-definite lattice with bilinear form $\latt{-,-}$ and dual lattice $L'$. 

Let $\Gamma$ be a congruence subgroup of $\SL_2(\ZZ)$ and $\chi: \Gamma \to \CC^\times$ be a character of finite order. Let $k$ and $m$ be positive integers. Let $J_{k,L,m}(\Gamma,\chi)$ denote the space of holomorphic Jacobi forms of weight $k$, index $m$ and character $\chi$ on $\Gamma$ for $L$. For any $\varphi \in J_{k,L,m}(\Gamma, \chi)$ with the Fourier expansion
$$
\varphi(\tau,\mathfrak{z})=\sum_{\substack{n\in\QQ, \, \ell \in L'\\ 2nm\geq \latt{\ell,\ell}}} f(n,\ell) q^n \zeta^\ell, \quad q=e^{2\pi i\tau}, \; \zeta^{\ell}=e^{2\pi i\latt{\ell,\mathfrak{z}}},
$$
its vanishing order at infinity is defined as 
$$
\ord_\infty(\varphi)=\min \{ n\in \QQ : \; \text{$f(n,\ell)\neq 0$ for some $\ell \in L'$} \}.
$$
Estimating how large $\ord_\infty(\varphi)$ could be is a natural problem. This type of estimate should have applications to computations of Jacobi forms and orthogonal modular forms. In the literature, there are several known results for classical Jacobi forms, that is, $L=A_1$.

Let $\varphi \in J_{k,A_1,m}(\SL_2(\ZZ))$.  In 1998, Gritsenko and Hulek \cite[Proposition 3.2]{GH98} proved that if $k$ is even, $\varphi$ is a cusp form, and $\ord_\infty(\varphi) > \min\{\frac{k+2m}{12},\, \frac{3k+2m-6}{18}\}$, then $\varphi=0$. For fixed $m$, this bound is of type $O(k)$ and is optimal for sufficiently large $k$.  

It is more difficult to find an optimal upper bound on $\ord_\infty(\varphi)$ for fixed $k$ and large $m$.  In 2022, Aoki \cite{Aok22} proved that if $\ord_\infty(\varphi)>\frac{k+1}{6}\big( \sqrt{2m+1}+1 \big)$, then $\varphi=0$. This is an upper bound of type $O(m^{\frac{1}{2}})$ for fixed $k$ and large $m$. Recently, Gritsenko, Skoruppa and Zagier \cite[Section 5]{GSZ19} proved that if $\varphi$ is a theta block, then $\ord_\infty(\varphi)$ is bounded from above by a constant depending only on $k$. Inspired by these two results, we establish the following upper bound on $\ord_\infty(\varphi)$, which is the best for fixed $k$ and large $m$.  

\begin{theorem}[see Theorem \ref{th:refined}]\label{MTH1}
It $\varphi \in J_{k,A_1,m}(\Gamma,\chi)$ satisfies
\begin{equation}
\ord_\infty(\varphi) \geq \frac{k\left[\SL_2(\ZZ) :\Gamma\right]}{4}\left(2\pi^{\frac{2}{3}}m^{\frac{1}{3}}k^{-\frac{1}{3}}+13\right)\cdot \log\log\big(2\pi^{\frac{2}{3}}m^{\frac{1}{3}}k^{-\frac{1}{3}}+13\big),
\end{equation} 
then it is identically zero. 
\end{theorem}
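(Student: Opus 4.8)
The plan is to bound $\ord_\infty(\varphi)$ by combining the valence (index) formula for Jacobi forms with a quantitative estimate on the volume of the region at the cusp where a nonzero Fourier coefficient is still permitted. First I would recall that a nonzero $\varphi \in J_{k,A_1,m}(\Gamma,\chi)$, viewed as a section over the modular curve $\Gamma\backslash\HH$, has a total divisor whose degree is controlled by $k[\SL_2(\ZZ):\Gamma]/12$ up to the contribution of the index $m$: more precisely, one has a bound of the shape $\sum_{P}\ord_P(\varphi) \ll k[\SL_2(\ZZ):\Gamma]$ coming from the fact that $\varphi^{12}/\Delta^{?}$ (or an appropriate power) descends to a modular form whose weight is $O(k)$, together with the quasi-periodicity in $z$ which forces extra zeros. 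The order at infinity $\ord_\infty(\varphi)$ contributes to this divisor with multiplicity roughly proportional to the number of lattice points $\ell \in A_1'$ with $\langle \ell,\ell\rangle \le 2nm$ — i.e. proportional to $\sqrt{m}\cdot\ord_\infty(\varphi)$ in the naive count — which would only reproduce Aoki's $O(\sqrt m)$ bound. The improvement to $O(m^{1/3})$ must come from a sharper input, so the real engine should be the theta-block / Taylor-coefficient argument of Gritsenko–Skoruppa–Zagier rather than the crude valence formula.

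Concretely, the second step is to expand $\varphi$ in its Taylor coefficients around $z=0$: write $\varphi(\tau,z) = \sum_{\nu\ge 0} \chi_\nu(\tau) z^\nu$, where each $\chi_\nu$ is a quasimodular form of weight $k+\nu$ on $\Gamma$, and the heat-operator corrections produce honest modular forms $\xi_\nu$ of weight $k+\nu$ built from $\chi_0,\dots,\chi_\nu$. The key point is that $\ord_\infty(\varphi)$ is a lower bound for $\ord_\infty(\chi_\nu)$ for every $\nu$, hence for $\ord_\infty(\xi_\nu)$ after controlling the (elementary, $q$-power-bounded) corrections; but a nonzero modular form of weight $w$ on $\Gamma$ has $\ord_\infty$ at most $\frac{w[\SL_2(\ZZ):\Gamma]}{12}$, so if $\varphi\ne 0$ we get $\ord_\infty(\varphi) \le \frac{(k+\nu_0)[\SL_2(\ZZ):\Gamma]}{12}$ where $\nu_0$ is the smallest index with $\xi_{\nu_0}\ne 0$. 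Thus everything reduces to bounding $\nu_0$. Here one invokes that the first nonvanishing Taylor coefficient $\chi_{\nu_0}$ is a cusp form of weight $k+\nu_0$ whose $q$-expansion vanishes to order $\ge \ord_\infty(\varphi)$; balancing the dimension of the relevant space of cusp forms against the forced vanishing gives a bound on $\nu_0$ in terms of $m$ and $k$. The factor $m^{1/3}$ and the $\log\log$ should emerge precisely at this balancing step: the Fourier support condition $2nm \ge \langle\ell,\ell\rangle = \ell^2/2$ (for $A_1' = \tfrac12\ZZ$) caps $\ell^2$ by $4nm$, hence the Taylor expansion "terminates" effectively at order $\nu \asymp (mn)^{1/2}$, and optimizing $n=\ord_\infty(\varphi)$ against $\nu_0 \asymp (m\cdot\ord_\infty(\varphi))^{?}$ with the dimension constraint $\ord_\infty(\varphi)\lesssim \nu_0 k$ yields $\ord_\infty(\varphi)^{2/3} \lesssim m^{1/3}k^{?}$, i.e. $\ord_\infty(\varphi) \lesssim m^{1/3}k^{?}$. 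The $\log\log$ is a lower-order correction from iterating the estimate (each application of "a form vanishing too much is zero" feeds back into the effective truncation order).

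The main obstacle I anticipate is making the truncation of the Taylor expansion \emph{quantitatively} sharp: one needs that $\varphi$ is, up to a controlled error, determined by its Taylor coefficients $\chi_0,\dots,\chi_N$ with $N \asymp \sqrt{m\cdot\ord_\infty(\varphi)}$, and that \emph{one} of these — after the heat-operator correction that kills the quasimodular anomaly — is a genuine nonzero modular form whose $\ord_\infty$ is still $\ge \ord_\infty(\varphi)$. The subtlety is that the correction terms, while elementary, involve $E_2$ and its derivatives and could in principle lower the vanishing order; one must check that the leading $q$-term of the correction cannot cancel the leading $q$-term of $\chi_{\nu_0}$, which is where the precise arithmetic of the Fourier coefficients $f(n,\ell)$ with $n=\ord_\infty(\varphi)$ enters. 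A secondary technical point is handling the character $\chi$ and the general congruence subgroup $\Gamma$: one passes to $\Gamma(N)$ or uses that $\varphi^{\ord(\chi)}$ has trivial character, at the cost of the index factor $[\SL_2(\ZZ):\Gamma]$ that appears in the statement. Once the effective truncation order $N$ and the dimension bound are in hand, the final inequality is pure calculus — choosing the optimal $\nu$ and simplifying constants to reach the clean form $\tfrac{k[\SL_2(\ZZ):\Gamma]}{4}(2\pi^{2/3}m^{1/3}k^{-1/3}+13)\log\log(\cdots)$.
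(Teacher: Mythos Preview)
Your proposal has a genuine gap: working only with the Taylor expansion at $z=0$ cannot beat the Gritsenko--Hulek bound $\ord_\infty(\varphi)\le (k+2m)/12$, which is $O(m)$ for fixed $k$. Indeed, your argument shows that $\xi_\nu(\varphi)=0$ whenever $\nu<12N-k$ (since $\xi_\nu(\varphi)$ is a level-one form of weight $k+\nu$ with $\ord_\infty\ge N$), so $\varphi$ vanishes to order at least $12N-k$ at $z=0$; but the only upper bound on this order is $2m$ (the total number of zeros of $\varphi(\tau,\cdot)$ in $\CC/(\tau\ZZ+\ZZ)$), giving $N\le(k+2m)/12$. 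Your claimed effective truncation at order $\nu\asymp(mN)^{1/2}$ has no justification: the constraint $\ell^2\le 4mN$ bounds the number of Fourier terms contributing to the leading $q$-coefficient of $\chi_\nu$, but does not force $\chi_\nu$ (or $\xi_\nu$) to vanish for large $\nu$.

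The paper's mechanism is quite different. The $\xi_\nu$ operators are applied not to $\varphi$ but to $\varphi|_{k,m}[\hat h:0]$ for many rational Heisenberg translates $\hat h=(a/t,b/t)$, producing modular forms on $\Gamma_1(t)$. Combining the valence formula on $\Gamma_1(t)$ with a lower bound on cuspidal vanishing (obtained from the $\phi_{-2,1},\phi_{0,1}$ expression of $\varphi/\Delta^N$) shows that $\varphi$ vanishes to order roughly $k\lfloor T/t\rfloor$ at each $t$-torsion point of $\CC/(\tau\ZZ+\ZZ)$, for all $t\le T$. Summing over the $t^2\psi_2(t)$ torsion points for $t\le T$ gives $\asymp kT^3$ zeros; comparing with the exact count $2m$ forces $T\lesssim (m/k)^{1/3}$, and the constraint linking $N$ to $T$ then yields $N\lesssim kT$. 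The $\log\log$ factor is not from iteration at all: it comes from Robin's inequality bounding $\psi_2(t)/\psi_1(t)\le e^\gamma\log\log t + O(1/\log\log t)$, which controls how the valence bound on $\Gamma_1(t)$ degrades with $t$. The torsion-point idea (already in Aoki, giving $O(\sqrt m)$) together with the high-multiplicity refinement via $\xi_\nu$ is precisely what your proposal is missing.
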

To prove the theorem, we improve and extend Aoki's argument \cite{Aok22} and employ Robin's inequality related to the Riemann hypothesis \cite{Rob84} to count the number of zeros (with multiplicities) of $\varphi(\tau,z)$ (as a function of $z$) in the fundamental domain $\CC / (\tau\ZZ+\ZZ)$. 

The optimality of the bound in Theorem \ref{MTH1} follows from Proposition \ref{prop:non-trivial}.  For any even $k$, we can find a positive rational number $C$ such that the subspace
$$
J_{k,A_1,m^3}[q^{Cm}]:=\{ \varphi \in J_{k,A_1,m^3}(\SL_2(\ZZ)) : \; \varphi\neq 0, \, \ord_\infty(\varphi) \geq Cm \} 
$$
is not zero for any sufficiently large even $m$ with $Cm\in\ZZ$. Therefore, the best possible upper bound is of type $O(m^{\frac{1}{3}+\varepsilon})$ for fixed $k$ and large $m$, where $\varepsilon$ is an arbitrary positive real number. 

The upper bound on the vanishing order of Jacobi forms at infinity applies to orthogonal modular forms. For example, it can be used to study the slope bounds and modularity of symmetric formal Fourier--Jacobi series. In this context, we need to establish upper bounds on $\ord_\infty(\varphi)$ for large $k$ and $m$ that satisfy $k\leq \epsilon m$ for a constant $\epsilon>0$. The first such bound was recently established by Aoki--Ibukiyama--Poor \cite{AIP24}. They showed that if $\varphi\in J_{k,A_1,dm}(\SL_2(\ZZ))$ satisfies $\ord_\infty(\varphi)\geq m$ and $m>\frac{1}{6}dk$, then $\varphi=0$. We extend their result to the most general case. 

\begin{theorem}[see Theorem \ref{th:k<m}]\label{MTH2}
If $\varphi \in J_{k,L,m}(\Gamma,\chi)$ satisfies $k< 3\cdot 2^{3-2^l}\cdot d_1 m$ and
$$
\ord_\infty(\varphi) > 3^{-2^{-l}} \cdot 2^{1-3\cdot2^{-l}}\cdot\left[\SL_2(\ZZ): \Gamma\right]\cdot\prod_{j=1}^{l}d_j^{2^{j-1-l}}\cdot k^{2^{-l}}m^{1-2^{-l}},
$$
then it is identically zero, where $l$ is the rank of $L$, and $d_j$ are positive integers such that $\bigoplus_{j=1}^l A_1(d_j)$ is a sublattice of $L$ and $d_i\leq d_j$ for any $1\leq i<j\leq l$. Here, $A_1(d)$ denotes the lattice of rank one with the bilinear form $2d x^2$ for $x\in\ZZ$. 
\end{theorem}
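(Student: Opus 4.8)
The plan is to induct on $l=\rank L$, after first reducing to the case $\Gamma=\SL_2(\ZZ)$ with trivial character. For this reduction, set $n=[\SL_2(\ZZ):\Gamma]$, choose coset representatives $\gamma_1,\dots,\gamma_n$ of $\Gamma\backslash\SL_2(\ZZ)$ with $\gamma_1=1$, and replace $\varphi$ by a suitable power (of bounded exponent $e\mid 12$, enough to trivialize the resulting character on $\SL_2(\ZZ)^{\mathrm{ab}}\cong\ZZ/12$) of $\prod_{i=1}^{n}\varphi|_k\gamma_i$. This yields a nonzero element of $J_{enk,\,L,\,enm}(\SL_2(\ZZ))$ with trivial character whose vanishing order at infinity is still $\ge\ord_\infty(\varphi)$, since the factor $\gamma_1=1$ contributes exactly $\ord_\infty(\varphi)$ and all other factors contribute non-negative orders. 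Because the claimed bound is homogeneous of degree one under $(k,m)\mapsto(Ck,Cm)$ — the factor $k^{2^{-l}}m^{1-2^{-l}}$ scales by $C$ — and the linear constraint $k<3\cdot2^{3-2^l}d_1m$ is homogeneous of the same degree, the theorem for general $(\Gamma,\chi)$ follows from the case $\Gamma=\SL_2(\ZZ)$, $\chi=1$; this is precisely where the factor $[\SL_2(\ZZ):\Gamma]$ in the statement comes from.

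For the base case $l=1$ we have $L=A_1(d_1)$, so $\varphi$ is a classical Jacobi form of weight $k$ and index $t=d_1m$, and the assertion reduces to: if $k<6t$ and $\ord_\infty(\varphi)>\sqrt{tk/6}$, then $\varphi=0$. This is a sharpened form of the theorem of Aoki--Ibukiyama--Poor \cite{AIP24}, which I would prove by refining their method. For generic $\tau$, the quasi-periodicity $\varphi(\tau,z+\lambda\tau+\mu)=e^{-2\pi it(\lambda^2\tau+2\lambda z)}\varphi(\tau,z)$ forces $z\mapsto\varphi(\tau,z)$ to have exactly $2t$ zeros, counted with multiplicity, in $\CC/(\ZZ\tau+\ZZ)$. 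On the other hand, writing $N=\ord_\infty(\varphi)$, the leading Fourier--Jacobi coefficient is $q^{N}\psi_N(z)$ with $\psi_N(z)=\sum_{r^2\le 4Nt}f(N,r)e^{2\pi irz}$ a nonzero Laurent polynomial of width $\le 4\sqrt{Nt}$ in $e^{2\pi iz}$. Letting $\tau\to i\infty$ and locating the $2t$ zeros — those near the real axis tend to the zeros of $\psi_N$, those near $\ZZ\tau$ do likewise by the transformation law, and the deep zeros are counted by the winding numbers of the tropical leading polynomials $\sum_{n+rc=v(c)}f(n,r)e^{2\pi irx}$, where $v(c)=\min\{n+rc:f(n,r)\neq0\}$, along the horizontal slices $\im z=cT$ with $T=\im\tau$ — one obtains $6N^2\le tk$, together with the auxiliary bound $N\le O(k+t)$ from the modular Taylor coefficients $\partial_z^{\nu}\varphi|_{z=0}$ and the valence formula; this gives the contradiction when $N>\sqrt{tk/6}$.

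Now let $l\ge2$ and $\Gamma=\SL_2(\ZZ)$, and let $v_1\in L$ generate the summand $A_1(d_1)$ of the fixed chain, so $M:=v_1^{\perp}\cap L$ has rank $l-1$ and contains $\bigoplus_{j\ge2}A_1(d_j)$. Consider $\varphi|_{M\otimes\CC}$. If it is nonzero it is a holomorphic Jacobi form of weight $k$ and index $m$ for $M$ with $\ord_\infty\ge\ord_\infty(\varphi)$, and the induction hypothesis applies: the constraint $k<3\cdot2^{3-2^{l-1}}d_2m$ follows from $k<3\cdot2^{3-2^l}d_1m$ since $d_2\ge d_1$ and $2^{3-2^l}<2^{3-2^{l-1}}$, and the inequality between the two target bounds — after cancelling the common factor $\prod_{j\ge2}d_j^{2^{j-1-l}}$ and using that the ratio of the two leading constants is $24^{2^{-l}}$ — is exactly $24^{2^{-l}}(d_1m/k)^{2^{-l}}\ge1$, i.e. $k\le24d_1m$, which is amply satisfied. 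Hence $\varphi|_{M\otimes\CC}=0$, a contradiction. If instead $\varphi|_{M\otimes\CC}\equiv0$, then by quasi-periodicity $\varphi$ vanishes on the whole sub-abelian variety cut out by $M$, so it is divisible by a power $\vartheta^{\,\nu_0}$ of a Jacobi theta function $\vartheta$ whose divisor is exactly that sub-abelian variety and which has weight $\tfrac12$, $\ord_\infty=\tfrac18$ and index $\delta\le 2d_1$; then $\varphi/\vartheta^{\,\nu_0}$ is a holomorphic Jacobi form for $L$ with nonzero restriction to $M\otimes\CC$, of weight $k-\tfrac{\nu_0}2$, index $m-\nu_0\delta$, and vanishing order $\ord_\infty(\varphi)-\tfrac{\nu_0}8$, to which the previous case is applied (if its weight or index has turned negative it vanishes, forcing $\varphi=0$; if its index is $0$ it is a modular form and the valence formula finishes).

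I expect two steps to be the real obstacles. The first is the base case: extracting the \emph{sharp} exponent $\tfrac12$ and constant $\tfrac1{\sqrt6}$ from the zero count — proving $6N^2\le tk$ rather than merely $N=O(k+t)$ — is the technical heart, and is where Robin-type or careful tropical estimates must enter; any slack here would propagate and degrade the induction. The second, and the genuinely new difficulty beyond \cite{AIP24}, is the degenerate case of the induction step: one must verify that dividing out $\vartheta^{\,\nu_0}$ preserves \emph{both} the vanishing-order inequality \emph{and} the linear constraint after the simultaneous substitutions $k\mapsto k-\tfrac{\nu_0}2$, $m\mapsto m-\nu_0\delta$, $\ord_\infty\mapsto\ord_\infty-\tfrac{\nu_0}8$, uniformly over $0\le\nu_0\le m/\delta$ — the loss $\tfrac{\nu_0}8$ in the order must be absorbed by the drops in weight and index at every level. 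It is exactly the requirement that the recursion between consecutive target bounds (equivalently $k\le24d_1m$) survive this bookkeeping at each step while the weight inflates and the index deflates that forces the peculiar constants $3\cdot2^{3-2^l}$ in the hypothesis and $3^{-2^{-l}}2^{1-3\cdot2^{-l}}$ in the conclusion.
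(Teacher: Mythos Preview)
Your overall architecture (reduce to $\SL_2(\ZZ)$ by taking a product over cosets, then induct on the rank) matches the paper, and your nondegenerate step --- restrict to $M=v_1^\perp$, compare the rank-$l$ and rank-$(l-1)$ target bounds, and observe that the needed inequality is exactly $k\le 24\,d_1 m$ --- is correct and cleanly done.  But the degenerate step, which you yourself flag as the crux, does not go through.

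Concretely, suppose $\varphi|_{M\otimes\CC}=0$ and you divide by $\vartheta^{\nu_0}$.  With the natural normalization the theta factor in the $v_1$-direction has index $\delta=1/(2d_1)$ for $L$ (not $\delta\le 2d_1$), so $\nu_0$ can be as large as $2d_1 m$.  To feed $\varphi/\vartheta^{\nu_0}$ back into the rank-$(l-1)$ hypothesis you need both
\[
k-\tfrac{\nu_0}{2}\;<\;3\cdot 2^{3-2^{l-1}}d_2\bigl(m-\nu_0\delta\bigr)
\quad\text{and}\quad
\ord_\infty(\varphi)-\tfrac{\nu_0}{8}\;>\;C_{l-1}\prod_{j\ge2}d_j^{2^{j-1-l}}\,(k')^{2^{1-l}}(m')^{1-2^{1-l}}.
\]
For the first inequality the loss on the right is $3\cdot 2^{3-2^{l-1}}d_2\,\nu_0\delta=3\cdot 2^{2-2^{l-1}}\cdot(d_2/d_1)\cdot\nu_0$, which already for $l=2$ equals $3(d_2/d_1)\nu_0\ge 3\nu_0\gg \nu_0/2$; the constraint is destroyed, not preserved.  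The second inequality is just as bad: the right side drops only by $O(\nu_0\cdot m^{-2^{1-l}})$ via the $m$-dependence, while the left side drops by $\nu_0/8$, and $\nu_0$ can be of order $m$.  So the bookkeeping you hoped would ``force the peculiar constants'' in fact fails at the very first inductive level.

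The paper sidesteps this entirely.  Instead of restricting once to a hyperplane and then peeling off theta factors, it evaluates $\varphi$ at \emph{many} rational torsion points in the direction of the \emph{largest} $d_n$, namely at $z_n=\tfrac{a}{t}\tau+\tfrac{b}{t}$ for a large prime $t$ and all $(a,b)\not\equiv(0,0)$.  Each such pullback, after multiplying over $\Gamma_1(t)\backslash\SL_2(\ZZ)$, is a Jacobi form of rank $l-1$ to which the induction hypothesis applies directly (this is Lemma~\ref{lem:induction}); the required inequality is $D_n^2=2d_n D_{n-1}$, built into the constants.  One then gets $t^2-1>2d_n$ zeros of $\varphi(\tau,\cdot,z_n)$ in a fundamental parallelogram and concludes $\varphi=0$ by the standard zero count.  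No division by theta functions, and hence no degradation of either the weight constraint or the vanishing order, ever occurs.

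Your base case is also different in spirit: the paper does not use any tropical/winding-number argument, but rather the same rational-pullback mechanism (Lemma~\ref{lem:zero-mult} with $n=0$), choosing a large prime $t$ so that the sum $\sum_{c}\max\{N-d_1c(t-c)/t^2,0\}$ beats $t k\psi_2(t)/(12\psi_1(t))$, which forces $(\varphi|[\tfrac{a}{t},\tfrac{b}{t}:0])(\tau,0)=0$ for all such $a,b$ and hence $>2d_1$ zeros in $\CC/(\ZZ\tau+\ZZ)$.  Your sketch of ``$6N^2\le tk$ from zero-tracking as $\tau\to i\infty$'' aims at the same numerical target, but as written it is not a proof.
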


We present two applications of the theorem. Let $U$ be an even lattice of signature $(1,1)$ and $M=2U\oplus L$. Let $F$ be a modular form of weight $k$ on the discriminant kernel $\widetilde{\Orth}^+(M)$ (see Section \ref{sec:sFJ} for details). Then $F$ has the Fourier--Jacobi expansion
$$
F(\tau,\mathfrak{z},\omega) = \sum_{m=0}^\infty \phi_m(\tau,\mathfrak{z})\cdot e^{2\pi im\omega}, \quad \phi_m \in J_{k,L,m}(\SL_2(\ZZ)). 
$$
The slope of $F$ is defined by $\varrho(F)=k/\hat{m}$, where $\hat{m}$ is the smallest integer $m$ such that $\phi_m\neq 0$. A lower bound on the slope of Siegel modular forms was given by Eichler \cite{Eic75} fifty years ago (see also \cite[Section 2.5]{BR15}). As an application of Theorem \ref{MTH2}, we establish the first lower bound on the slope of orthogonal modular forms. 

\begin{theorem}[see Theorem \ref{th:slope}]\label{MTH3}
If $F$ is a non-zero modular form of weight $k$ on $\widetilde{\Orth}^+(M)$, then
$$
\varrho(F) \geq 3\cdot 2^{3-2^l}\cdot \prod_{j=1}^l d_j^{-2^{j-1}},
$$    
where $l$ and $d_j$ are positive integers defined in Theorem \ref{MTH2}. 
\end{theorem}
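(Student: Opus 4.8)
The plan is to apply Theorem~\ref{MTH2} to the first non-vanishing Fourier--Jacobi coefficient $\phi_{\hat m}\in J_{k,L,\hat m}(\SL_2(\ZZ))$ of $F$; the point is that a symmetry of orthogonal modular forms forces $\ord_\infty(\phi_{\hat m})\geq\hat m$, which converts the upper bound on the vanishing order supplied by Theorem~\ref{MTH2} into a lower bound on $k/\hat m$. If $\hat m=0$ then $\varrho(F)=+\infty$ and there is nothing to prove, so I may assume $\hat m\geq 1$; then $\phi_{\hat m}\neq 0$ by the definition of $\hat m$.

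\textbf{Step 1: a reflection-type symmetry forcing $\ord_\infty(\phi_{\hat m})\geq\hat m$.} Since $M=2U\oplus L=U\oplus U\oplus L$ contains two orthogonal copies of the hyperbolic plane, I would use the isometry $\sigma$ of $M$ that interchanges them, twisted by a sign so that it preserves a connected component of the symmetric domain; concretely one may take $\sigma\colon (u_1,u_2,v)\mapsto(u_2,-u_1,v)$, which has determinant $+1$ on a maximal negative-definite subspace and fixes $L$ pointwise, hence acts trivially on $M'/M\cong L'/L$, so that $\sigma\in\widetilde{\Orth}^+(M)$. Writing the Fourier expansion of $F$ at the relevant $0$-dimensional cusp as $F=\sum_{n,\ell,m}c(n,\ell,m)\,q^n\zeta^\ell e^{2\pi i m\omega}$, the invariance of $F$ under $\sigma$ (which fixes this cusp) yields a relation of the form $c(n,\ell,m)=\varepsilon\,c(m,\ell',n)$, where $\ell\mapsto\ell'$ is a bijection of $L'$ and $\varepsilon$ is a fixed root of unity coming from the automorphy factor of $\sigma$; this is the orthogonal counterpart of the symmetry $c(n,r,m)=c(m,r,n)$ of Siegel modular forms of degree two that underlies Eichler's slope bound. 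Thus $\phi_m=0$ for all $m<\hat m$ forces $c(m,\ell,n)=0$ whenever $m<\hat m$, i.e. $c(n,\ell,\hat m)=0$ for all $n<\hat m$; since $\phi_{\hat m}(\tau,\mathfrak z)=\sum_{n,\ell}c(n,\ell,\hat m)q^n\zeta^\ell\neq 0$, we get $\ord_\infty(\phi_{\hat m})\geq\hat m$.

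\textbf{Step 2: combine with Theorem~\ref{MTH2}.} Apply Theorem~\ref{MTH2} to $\phi_{\hat m}\in J_{k,L,\hat m}(\SL_2(\ZZ))$, noting $[\SL_2(\ZZ):\Gamma]=1$. As $\phi_{\hat m}\neq 0$, at least one hypothesis of that theorem fails, so either $k\geq 3\cdot 2^{3-2^l}\,d_1\,\hat m$, or
$$
\ord_\infty(\phi_{\hat m})\;\leq\; C\,k^{2^{-l}}\,\hat m^{\,1-2^{-l}},\qquad
C:=3^{-2^{-l}}\cdot 2^{\,1-3\cdot 2^{-l}}\cdot\prod_{j=1}^{l}d_j^{\,2^{j-1-l}}.
$$
In the first case $\varrho(F)=k/\hat m\geq 3\cdot 2^{3-2^l}d_1\geq 3\cdot 2^{3-2^l}\prod_{j=1}^{l}d_j^{-2^{j-1}}$, since $d_1\geq 1$ and $\prod_{j}d_j^{-2^{j-1}}\leq 1$. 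In the second case, combining with Step~1 gives $\hat m\leq C\,k^{2^{-l}}\hat m^{1-2^{-l}}$; dividing by $\hat m^{1-2^{-l}}$ and raising to the $2^l$-th power yields $\hat m\leq C^{2^l}k$, and since $C^{2^l}=3^{-1}\cdot 2^{\,2^l-3}\cdot\prod_{j=1}^{l}d_j^{\,2^{j-1}}$ we conclude $\varrho(F)=k/\hat m\geq C^{-2^l}=3\cdot 2^{3-2^l}\cdot\prod_{j=1}^{l}d_j^{-2^{j-1}}$. In either case this is the claimed bound.

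\textbf{The main obstacle} is Step~1: one must write down the exact isometry of $2U\oplus L$ exchanging the two hyperbolic planes, check that a suitably sign-twisted version lies in $\widetilde{\Orth}^+(M)$ and stabilizes the $0$-dimensional cusp used for the Fourier expansion, and verify that $F\mapsto F\circ\sigma$ permutes the Fourier coefficients in the stated way, keeping track of the automorphy factor (which only influences the irrelevant constant $\varepsilon$). Everything after that is elementary: once $\ord_\infty(\phi_{\hat m})\geq\hat m$ is in hand, the conclusion follows from a short manipulation of the exponents in Theorem~\ref{MTH2}, in which the reciprocal powers $2^{-l}$ and $2^{l}$ cancel.
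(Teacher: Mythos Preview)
Your proposal is correct and follows the same approach as the paper's proof of Theorem~\ref{th:slope}: establish $\ord_\infty(\phi_{\hat m})\geq\hat m$ from the $\tau\leftrightarrow\omega$ symmetry and then feed this into Theorem~\ref{MTH2} with the identical two-case split and exponent manipulation. What you flag as the main obstacle (Step~1) is treated in the paper as a standing fact---the symmetric relation $f(n,\ell,m)=f(m,\ell,n)$ is recorded in Section~\ref{sec:sFJ} just before the definition of symmetric formal Fourier--Jacobi series---and on the tube domain the relevant element of $\widetilde{\Orth}^+(M)$ acts simply as $(\tau,\mathfrak z,\omega)\mapsto(\omega,\mathfrak z,\tau)$, giving $\varepsilon=1$ and $\ell'=\ell$ with no twist needed.
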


A symmetric formal Fourier--Jacobi series $\Psi$ of weight $k$ on $\widetilde{\Orth}^+(M)$ is a formal series of type $\sum_{m=0}^\infty \phi_m(\tau,\mathfrak{z})\cdot e^{2\pi i m\omega}$ with $\phi_m \in J_{k,L,m}(\SL_2(\ZZ))$ that is invariant under the exchange of $\tau$ and $\omega$. It is widely believed that each $\Psi$ converges and coincides with the Fourier--Jacobi expansion of a modular form on  $\widetilde{\Orth}^+(M)$ (see Conjecture \ref{conj}). As another application of Theorem \ref{MTH2}, we prove the following structure result. 

\begin{theorem}[see Theorem \ref{th:algebraic}]\label{MTH4}
The space $\mathbf{FM}_*(\widetilde{\Orth}^+(M))$ of symmetric formal Fourier--Jacobi series of integral weights has finite rank as a graded module over the ring $\mathbf{M}_*(\widetilde{\Orth}^+(M))$ of modular forms of integral weights. In particular, $\mathbf{FM}_*(\widetilde{\Orth}^+(M))$ is algebraic over $\mathbf{M}_*(\widetilde{\Orth}^+(M))$. 
\end{theorem}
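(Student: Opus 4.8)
The plan is to prove that $\mathbf{FM}_*(\widetilde{\Orth}^+(M))$ has finite rank over $\mathbf{M}_*(\widetilde{\Orth}^+(M))$ by exhibiting a graded-module embedding of $\mathbf{FM}_*$ into a finitely generated $\mathbf{M}_*$-module, and then invoking that $\mathbf{M}_*(\widetilde{\Orth}^+(M))$ is a finitely generated $\CC$-algebra (hence Noetherian). Concretely, given a symmetric formal Fourier--Jacobi series $\Psi = \sum_{m\geq 0} \phi_m e^{2\pi i m\omega}$ of weight $k$, the symmetry under exchanging $\tau$ and $\omega$ forces strong constraints on the low-index coefficients $\phi_m$. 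The key point is that Theorem \ref{MTH2} (via Theorem \ref{MTH3}, the slope bound) shows that a \emph{modular form} $F$ of weight $k$ must have $\hat m \leq k / (3\cdot 2^{3-2^l}\prod_j d_j^{-2^{j-1}}) =: c\cdot k$; I would argue the same inequality forces a formal series $\Psi$ to be determined by finitely many of its coefficients relative to $\mathbf{M}_*$.

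The main steps, in order, are as follows. First, I would recall (or cite the standard fact) that the map $\Psi \mapsto (\phi_0, \phi_1, \dots, \phi_N)$ truncating at index $N := \lfloor c k\rfloor$ is \emph{injective} on weight-$k$ formal series: if all $\phi_m$ with $m \leq N$ vanish, then by symmetry all $\phi_m$ with $m \leq N$ vanish simultaneously in both expansion variables, and a bootstrap using the symmetry (the coefficient $\phi_m$ "sees" the Fourier coefficients of $\phi_{m'}$ for various $m'$, and the Taylor coefficients at $\mathfrak z = 0$) propagates the vanishing to all $m$; alternatively, one shows directly that such a $\Psi$ would be a formal series whose associated "slope" exceeds the bound of Theorem \ref{MTH3} unless $\Psi = 0$. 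This is where Theorem \ref{MTH2} enters essentially: it is exactly the input that bounds $\ord_\infty(\phi_m)$ for all the relevant $m \leq ck$, preventing a nonzero $\Psi$ from hiding all its mass at high index. Second, having injectivity, I would observe that $\bigoplus_{k} \bigoplus_{m\leq ck} J_{k,L,m}(\SL_2(\ZZ))$ is, after suitable interpretation, a finitely generated module over $\mathbf{M}_*(\widetilde{\Orth}^+(M))$ — here one uses that Jacobi forms of bounded index $m \leq ck$ form a finitely generated module over the ring of scalar modular forms, together with the finite generation of $\mathbf{M}_*(\widetilde{\Orth}^+(M))$ itself.

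Finally, I would assemble these: the embedding $\mathbf{FM}_*(\widetilde{\Orth}^+(M)) \hookrightarrow N_*$ into a finitely generated (hence Noetherian) $\mathbf{M}_*$-module shows $\mathbf{FM}_*$ is itself a finitely generated, in particular finite-rank, graded $\mathbf{M}_*$-module; algebraicity over $\mathbf{M}_*$ is then immediate since every element of a module-finite extension is integral (in the ring-theoretic sense, after passing to the subring generated by a given element and $\mathbf{M}_*$). I would make sure to handle the grading carefully — the bound $N \sim ck$ grows linearly in $k$, so the "truncation target" is not a fixed finite-dimensional space but grows with weight; this is precisely why one phrases the statement as finite \emph{rank} over $\mathbf{M}_*$ rather than finite-dimensionality over $\CC$, and why the Jacobi-forms-of-linearly-bounded-index module must be shown to be $\mathbf{M}_*$-finite rather than $\CC$-finite.

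The hardest step I expect is the first one: rigorously proving that truncation at index $\sim ck$ is injective on formal series of weight $k$. One must exploit the symmetry $\tau \leftrightarrow \omega$ to convert vanishing of low-index Fourier--Jacobi coefficients into vanishing of low-order Taylor/Fourier data, and then run an induction on the index $m$ using Theorem \ref{MTH2} to bound vanishing orders at each stage; keeping track of exactly which Jacobi forms' coefficients are constrained by the symmetry relations — and ensuring the slope bound of Theorem \ref{MTH3} applies uniformly — is the delicate bookkeeping. A secondary subtlety is verifying that the relevant module of bounded-index Jacobi forms is genuinely finitely generated over $\mathbf{M}_*(\widetilde{\Orth}^+(M))$ and not merely over the ring of elliptic modular forms; this should follow from known finiteness results for rings of Jacobi forms of lattice index combined with the structure of $\mathbf{M}_*(\widetilde{\Orth}^+(M))$, but it needs to be stated carefully.
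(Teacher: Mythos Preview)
Your step~1 is essentially correct and is what the paper actually proves (as Theorem~\ref{th:finite-dim}), but you overcomplicate it: the slope bound of Theorem~\ref{th:slope} is stated and proved for \emph{formal} series, not just for genuine modular forms. The symmetry $f_m(n,\ell)=f_n(m,\ell)$ immediately gives $\ord_\infty(\psi_m)\geq m$ for the first nonvanishing coefficient $\psi_m$, and then Theorem~\ref{th:k<m} forces $m\leq k/\varrho_L$. No bootstrap or propagation argument is needed; injectivity of the truncation at index $\lfloor k/\varrho_L\rfloor$ is a one-line consequence.

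The genuine gap is step~2. The target $\bigoplus_{k}\bigoplus_{m\leq ck} J_{k,L,m}$ carries no natural $\mathbf{M}_*(\widetilde{\Orth}^+(M))$-module structure compatible with truncation: if $F=\sum_j f_j\xi^j\in\mathbf{M}_{k'}$ and $\Psi=\sum_m\phi_m\xi^m$, then the truncation of $F\Psi$ is a \emph{convolution} in the $\phi_m$'s, not a scalar action. Your hedge ``after suitable interpretation'' hides exactly the point that needs work, and I do not see how to supply it short of proving something close to Conjecture~\ref{conj}. Consequently the Noetherian argument never gets off the ground.

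The paper bypasses this entirely by a pure dimension count. From step~1 one gets $\dim\mathbf{FM}_k\leq\sum_{m=0}^{\lfloor k/\varrho_L\rfloor}\dim J_{k,L,m}=O(k^{l+2})$, while a cited result gives $\dim\mathbf{M}_k\sim k^{l+2}$ as well. Finite rank then follows from the elementary graded-algebra fact: if homogeneous elements $\Psi_1,\dots,\Psi_r\in\mathbf{FM}_*$ of weights $w_i$ were $\mathbf{M}_*$-linearly independent, then $\dim\mathbf{FM}_k\geq\sum_i\dim\mathbf{M}_{k-w_i}$, which grows like $r\cdot k^{l+2}$ and hence bounds $r$. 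So you should replace your module-embedding step~2 by this comparison of Hilbert functions; everything else in your outline then goes through.
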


Aoki, Ibukiyama and Poor \cite{AIP24} proved this theorem for $L=A_1(d)$, which plays a crucial role in their proof of the modularity of symmetric formal Fourier--Jacobi series for paramodular groups. Remark that the modularity of symmetric formal Fourier--Jacobi series has been extensively studied in various contexts by Aoki \cite{Aok00}, Ibukiyama--Poor--Yuen \cite{IPY13}, Bruinier--Raum \cite{BR15,BR24}, Xia \cite{Xia22}, Pollack \cite{Pol24}, and Flores \cite{Flo24}. These results have further applications to arithmetic geometry. 

This paper is organized as follows. In Section \ref{sec:preliminary}, we review some basics of Jacobi forms. In Section \ref{sec:classical}, we prove Theorem \ref{MTH1}. In Section \ref{sec:lattice-index}, we prove Theorem \ref{MTH2}. In Section \ref{sec:sFJ}, we review symmetric formal Fourier--Jacobi series on $\Orth(m,2)$ and prove Theorem \ref{MTH3} and Theorem \ref{MTH4}.

\section{Jacobi forms of lattice index}\label{sec:preliminary}
In this section, we give a brief overview of the theory of Jacobi forms of lattice index following \cite{EZ85, Gri88, CG13}. Let $L$ be an even positive definite lattice with bilinear form $\latt{\cdot,\cdot}$ and dual lattice $L'$. Let $k\in\ZZ$ be an integer and $m\in \NN$ be a non-negative integer. The group $\SL_2(\RR)$ acts on the space of holomorphic functions on $\HH\times(L\otimes \CC)$ via
\begin{equation}
\big(\varphi\vert_{k,m} A\big)(\tau,\mathfrak{z}):= (c\tau + d)^{-k} e^{-\pi i m \frac{c\latt{\mathfrak{z},\mathfrak{z}}}{c \tau + d}}\varphi \left( \frac{a\tau +b}{c\tau + d},\frac{\mathfrak{z}}{c\tau + d} \right), \quad A=\begin{psmallmatrix}
a & b \\ c & d
\end{psmallmatrix}\in \SL_2(\RR). 
\end{equation}
The real Heisenberg group associated with $L$ is the set
$$ 
H(L\otimes \RR)=\{ [x,y:r]: x,y\in L\otimes \RR, r \in \RR\}
$$
together with the operation
\begin{equation}\label{eq:Heisenberg}
[x_1,y_1:r_1] \cdot [x_2,y_2:r_2]= \Big[x_1+x_2,y_1+y_2:r_1+r_2+\frac{1}{2}\big( \latt{x_1,y_2}-\latt{x_2,y_1} \big) \Big].
\end{equation}
It acts on the space of holomorphic functions on $\HH\times(L\otimes \CC)$ via
\begin{equation}\label{eq:action-2}
\big(\varphi\vert_{k,m}[x,y:r]\big)(\tau,\mathfrak{z}):= e^{\pi i m ( \latt{x,x}\tau +2\latt{x,\mathfrak{z}}+\latt{x,y}+2r)}  \varphi (\tau, \mathfrak{z}+ x \tau + y).  
\end{equation}
For any $A\in \SL_2(\RR)$ and $h=[x,y:r]\in H(L\otimes \RR)$, we have 
\begin{equation}\label{eq:semidirect}
\Big(\big(\varphi\vert_{k,m}A\big)\vert_{k,m}h\Big)\vert_{k,m}A^{-1} = \varphi\vert_{k,m}(hA^{-1}), \quad \text{where} \; hA^{-1}=[dx - cy,\, ay - bx : r].    
\end{equation}
The integral Heisenberg group is a subgroup of $H(L\otimes\RR)$ defined as
$$
H(L)=\big\{ [x,y:r]: \; x,y\in L, \; r+\latt{x,y}/2\in \ZZ \big\}.
$$
Let $\Gamma$ be a congruence subgroup of $\SL_2(\ZZ)$, and let $\chi: \Gamma \to \CC^\times$ be a character of finite order. Jacobi forms are holomorphic functions on $\HH\times(L\otimes \CC)$ that are invariant under the action of the semidirect product $\Gamma \rtimes H(L)$ and satisfy some boundary conditions.  

\begin{definition}
A holomorphic function $\varphi : \HH \times (L \otimes \CC) \rightarrow \CC$ is called a weak Jacobi form of weight $k$, index $m$ and character $\chi$ on $\Gamma$ for $L$ if it satisfies 
$$
\varphi\vert_{k,m}A=\chi(A) \varphi  \quad \text{and} \quad  \varphi\vert_{k,m}h=\varphi
$$
for any $A\in \Gamma$ and $h\in H(L)$, and if around any cusp represented by $\gamma\in \Gamma \backslash \SL_2(\ZZ)$, it has the Fourier expansion 
\begin{equation}
\big(\varphi\vert_{k,m}\gamma\big)( \tau, \mathfrak{z} )= \sum_{ \substack{ n\in \frac{1}{n_\gamma} + \ZZ, \; \ell \in L' \\ n\geq 0 } }f_\gamma(n,\ell)e^{2\pi i (n\tau + \latt{\ell,\mathfrak{z}})},
\end{equation}
where $n_\gamma$ is a certain integer depending on $\gamma$ and $\chi$. Furthermore, if $f_\gamma(n, \ell)=0$ unless $2nm\geq \latt{\ell,\ell}$ around any cusp $\gamma$, then $\varphi$ is called a holomorphic Jacobi form. Let $J_{k,L,m}^{\w}(\Gamma,\chi)$ and $J_{k,L,m}(\Gamma,\chi)$ denote the $\CC$-vector space of weak Jacobi forms and holomorphic Jacobi forms, respectively. If $\Gamma=\SL_2(\ZZ)$ or $\chi=1$, then we omit $\Gamma$ or $\chi$ for simplification.     
\end{definition}

Remark that weak Jacobi forms of index $0$ are independent of the variable $\mathfrak{z}\in L\otimes\CC$. In fact, they degenerate into the usual holomorphic modular forms on $\Gamma$, that is, $J_{k,L,0}^{\w}(\Gamma, \chi)=M_k(\Gamma, \chi)$. For any positive integer $d$, we denote by $L(d)$ the lattice with the abelian group $L$ and the rescaled bilinear form $d\latt{\cdot,\cdot}$. We note that $J_{k,L,md}(\Gamma,\chi)=J_{k,L(d),m}(\Gamma, \chi)$. Every lattice $L$ of rank one is of type $A_1(d)$, where $A_1$ is the simplest even lattice $\ZZ$ equipped with the bilinear form $2x^2$. The space $J_{k,A_1,m}(\Gamma, \chi)$ is identical to the space $J_{k,m}(\Gamma, \chi)$ of classical Jacobi forms introduced by Eichler--Zagier \cite{EZ85}. 

The structure of $J_{k,m}^{\w}$, that is, the space of weak Jacobi forms with trivial character on $\SL_2(\ZZ)$ for $A_1$, is described in \cite[Theorem 9.4]{EZ85} (see also \cite[Section 2.3]{GW20}). 

\begin{proposition}\label{Prop:weak}
Let $k\in\ZZ$, $m\in\NN$ and $\varphi\in J_{k,m}^{\w}$. If $k$ is even, then $\varphi$ has the unique expression 
$$
\varphi(\tau,z)=\sum_{j=0}^m f_j(\tau)\phi_{-2,1}^j(\tau,z)\phi_{0,1}(\tau,z)^{m-j}
$$
where $f_j\in \mathrm{M}_{k+2j}(\SL_2(\ZZ))$ for any $0\leq j\leq m$, and 
\begin{align*}
\phi_{-2,1}(\tau, z) &= \frac{\vartheta(\tau, z)^2}{\eta(\tau)^{6}} = \zeta + \zeta^{-1} - 2 +O(q) \in J_{-2,1}^{\w}, \\
\phi_{0,1}(\tau, z) & = - \frac{3}{\pi^{2}}\wp(\tau, z)\phi_{-2,1}(\tau, z) = \zeta+\zeta^{-1}+10+O(q) \in J_{0,1}^{\w}. 
\end{align*}
If $k$ is odd, then there exists a unique $\psi\in J_{k+1,m-2}^{\w}$ such that $\varphi=\phi_{-1,2}\psi$, where
$$
\phi_{-1,2}(\tau, z) = \frac{\vartheta(\tau, 2z)}{\eta(\tau)^3} =\zeta-\zeta^{-1} + O(q) \in J_{-1,2}^{\w}. 
$$
Here $q=e^{2\pi i\tau}$ for $\tau\in \HH$, $\zeta=e^{2\pi iz}$ for $z\in\CC$, and 
$$
\eta(\tau)=q^{\frac{1}{24}}\prod_{n=1}^\infty(1-q^n)
$$
is the Dedekind eta function, and 
$$
\vartheta(\tau,z)=\sum_{n\in\ZZ}\left(\frac{-4}{n}\right)q^{\frac{n^2}{8}}\zeta^{\frac{n}{2}} =-q^{\frac{1}{8}}\zeta^{-\frac{1}{2}}\prod_{n=1}^\infty(1-q^{n-1}\zeta)(1-q^n\zeta^{-1})(1-q^n)
$$
is the odd Jacobi theta function, and 
$$
\wp(\tau, z) = \frac{1}{z^{2}} + \sum_{\omega \in (\mathbb{Z} + \tau \mathbb{Z}) \setminus \{0\}} \left( \frac{1}{(z + \omega)^{2}} - \frac{1}{\omega^{2}} \right)
$$
is the Weierstrass elliptic function. 
\end{proposition}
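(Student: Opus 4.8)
The plan is to prove this following \cite[Theorem 9.4]{EZ85}, by induction on the index $m$, separating the two cases by the parity of $k$. First I would record that applying $|_{k,m}$ with $A=-I\in\SL_2(\ZZ)$ — for which the elliptic exponential factor is trivial — gives $\varphi(\tau,-z)=(-1)^{k}\varphi(\tau,z)$, so $\varphi$ is even in $z$ when $k$ is even and odd in $z$ when $k$ is odd. I will also use repeatedly that $\vartheta(\tau,w)$ vanishes, to first order, exactly on $w\in\ZZ+\tau\ZZ$, so that $\phi_{-2,1}=\vartheta(\tau,z)^{2}/\eta(\tau)^{6}$ has a double zero exactly along $z\in\ZZ+\tau\ZZ$ while $\phi_{-1,2}=\vartheta(\tau,2z)/\eta(\tau)^{3}$ has a simple zero exactly along $z\in\tfrac12\ZZ+\tfrac{\tau}{2}\ZZ$; recall $\eta$ is nowhere zero on $\HH$.

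For even $k$ the base case $m=0$ is the identity $J_{k,0}^{\w}=M_{k}(\SL_2(\ZZ))$ recorded after the definition. For the inductive step, $\varphi(\tau,0)$ is a holomorphic modular form of weight $k$ on $\SL_2(\ZZ)$; since $\phi_{0,1}(\tau,0)=12$ and $\phi_{-2,1}(\tau,0)=0$, the difference $\varphi-12^{-m}\varphi(\cdot,0)\,\phi_{0,1}^{m}$ vanishes at $z=0$, hence — being even in $z$ — to order at least two there, and then, by the quasi-periodicity (whose exponential factors never vanish), to order at least two along all of $z\in\ZZ+\tau\ZZ$. Consequently $\psi:=(\varphi-12^{-m}\varphi(\cdot,0)\,\phi_{0,1}^{m})/\phi_{-2,1}$ is holomorphic on $\HH\times\CC$; using multiplicativity of the slash actions in the weight and index one checks it obeys the transformation laws of weight $k+2$, index $m-1$, and the weak growth condition, so $\psi\in J_{k+2,m-1}^{\w}$. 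Applying the induction hypothesis to $\psi$ and reindexing yields $\varphi=\sum_{j=0}^{m}f_{j}\phi_{-2,1}^{j}\phi_{0,1}^{m-j}$ with $f_{j}\in M_{k+2j}(\SL_2(\ZZ))$ and $f_{0}=12^{-m}\varphi(\cdot,0)$. Uniqueness follows the same way: if $\sum_{j}f_{j}\phi_{-2,1}^{j}\phi_{0,1}^{m-j}=0$, setting $z=0$ annihilates every term with $j\geq1$ and leaves $12^{m}f_{0}=0$, so $f_{0}=0$; dividing by $\phi_{-2,1}$ and inducting forces all $f_{j}=0$.

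For odd $k$ with $m\geq 2$ the key claim is that $\varphi$ vanishes along $z\in\tfrac12\ZZ+\tfrac{\tau}{2}\ZZ$. Modulo $\ZZ+\tau\ZZ$ this set is $\{0,\tfrac12,\tfrac{\tau}{2},\tfrac{1+\tau}{2}\}$: vanishing at $z=0$ is oddness, and vanishing at the other three I would get by combining oddness with the translations $z\mapsto z+1$ and $z\mapsto z+\tau$ from $H(A_{1})$, whose exponential factors at these points are trivial for integral $m$ — for instance $\varphi(\tau,\tfrac12)=\varphi(\tau,-\tfrac12)=-\varphi(\tau,\tfrac12)$. Quasi-periodicity then propagates the vanishing to all of $z\in\tfrac12\ZZ+\tfrac{\tau}{2}\ZZ$, so $\psi:=\varphi/\phi_{-1,2}$ is holomorphic on $\HH\times\CC$, obeys the transformation laws of weight $k+1$, index $m-2$, and the weak growth condition, whence $\psi\in J_{k+1,m-2}^{\w}$ and $\varphi=\phi_{-1,2}\psi$; uniqueness is immediate since $\phi_{-1,2}\not\equiv0$. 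When $m\leq 1$ the assertion reads $\varphi=0$: one has $J_{k,m}^{\w}=0$ for odd $k$ here because there are no odd functions of $z$ carrying index-$0$ or index-$1$ quasi-periodicity.

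The step I expect to be the main obstacle is justifying that each quotient $\psi$ above is genuinely a weak Jacobi form. Holomorphy on $\HH\times\CC$ is a removable-singularity statement in the $z$-direction, uniform in $\tau$: the numerator vanishes along the zero divisor of the denominator to at least the order of the denominator's zero, so the quotient extends holomorphically across it. Given holomorphy, the transformation laws are a formal consequence of those of $\varphi$, $\phi_{-2,1}$ and $\phi_{-1,2}$. The remaining subtlety is that $\psi$ inherits the weak growth condition, i.e. no negative powers of $q$ occur in its Fourier expansion; here I would compare the lowest powers of $q$ on the two sides of $\varphi=(\text{denominator})\cdot\psi$, using that the leading ($q^{0}$) Fourier--Jacobi coefficient of $\phi_{-2,1}$ is $\zeta+\zeta^{-1}-2\not\equiv0$ and that of $\phi_{-1,2}$ is $\zeta-\zeta^{-1}\not\equiv0$, together with the fact that the ring of holomorphic functions of $z$ has no zero divisors, to conclude that the lowest $q$-exponent of $\psi$ is $\geq0$.
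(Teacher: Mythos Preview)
The paper does not supply its own proof of this proposition; it simply cites \cite[Theorem 9.4]{EZ85} (and \cite[Section 2.3]{GW20}) and states the result. Your argument is essentially the classical Eichler--Zagier proof: peel off the value at $z=0$ using $\phi_{0,1}(\tau,0)=12$, divide the resulting even function by $\phi_{-2,1}$, and induct on the index; in odd weight, use oddness together with the Heisenberg translations to force vanishing at the $2$-torsion and divide by $\phi_{-1,2}$. This is correct and matches the cited source. The only place you are a bit terse is the claim $J_{k,1}^{\w}=0$ for odd $k$: here it is worth saying explicitly that your $2$-torsion vanishing gives at least four zeros of $\varphi(\tau,\cdot)$ in $\CC/(\ZZ+\tau\ZZ)$, whereas a non-zero index-$1$ form has exactly two (cf.\ \cite[Theorem 1.2]{EZ85}), forcing $\varphi=0$.
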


Recall that both $J_{*,L,1}^{\w}=\bigoplus_{k\in\ZZ} J_{k,L,1}^{\w}$ and $J_{*,L,1}=\bigoplus_{k\in\ZZ} J_{k,L,1}$ are free modules of rank $\det(L)$ over $M_*(\SL_2(\ZZ))=\CC[E_4,E_6]$, that is, the ring of usual modular forms of even weight on $\SL_2(\ZZ)$ (see, e.g., \cite[Proposition 2.2]{WW23}).  
For any $L$ of rank $l$, there exist positive integers $d_j$ for $1\leq j\leq l$ such that $\bigoplus_{j=1}^l A_1(d_j)$ is an even sublattice of $L$, and therefore 
$$
J_{k,L,m}^{\w} \subset J_{k, \, \oplus_{j=1}^l A_1(d_j),\, m}^{\w} \quad \text{and} \quad J_{k,L,m} \subset J_{k, \, \oplus_{j=1}^l A_1(d_j),\, m}.
$$
Hence, the space $J_{k, \, \oplus_{j=1}^l A_1(d_j),\, m}^{\w}$ can be determined by Proposition \ref{Prop:weak} and the following isomorphism between $\CC[E_4,E_6]$-modules (see \cite[Theorem 2.4]{WW23})
\begin{equation}\label{eq:tensor}
\begin{split}
J_{*,L_1,1}^{\w} \otimes J_{*,L_2,1}^{\w} &\longrightarrow J_{*,L_1\oplus L_2, 1}^{\w}, \\
(\varphi_1, \varphi_2) &\longmapsto (\varphi_1\otimes\varphi_2)(\tau, \mathfrak{z}_1, \mathfrak{z}_2):=\varphi_1(\tau,\mathfrak{z}_1)\varphi_2(\tau,\mathfrak{z}_2), 
\end{split}
\end{equation}
where $\mathfrak{z}_1\in L_1\otimes\CC$ and $\mathfrak{z}_2\in L_2\otimes\CC$. Consequently, this provides a unified approach to bounding the graded module $J_{*,L,m}^{\w}$. 

We now consider the action of rational elements in the Heisenberg group on Jacobi forms, and describe the modular invariance of the corresponding images. The following result is a generalization of \cite[Section 3]{GW25}, which plays a crucial role in the proof of our main theorems. 

\begin{proposition}\label{Prop:pullback}
Fix $\hat{h}=(\hat{x},\hat{y})$ with $\hat{x}, \hat{y} \in L\otimes\QQ$. For any $\varphi\in J_{k,L,m}^{\w}(\Gamma, \chi)$, we define 
$$
\hat{\varphi}(\tau,\mathfrak{z}):=\big(\varphi\vert_{k,m} [\hat{h}:0]\big)(\tau,\mathfrak{z}).
$$
\begin{enumerate}
\item Recall that $\hat{h}\gamma=(a\hat{x}+c\hat{y}, b\hat{x}+d\hat{y})$ for $\gamma=\begin{psmallmatrix}
a & b \\ c & d     
\end{psmallmatrix}\in \SL_2(\ZZ)$. Then 
$$
\hat{\Gamma}:=\{ \gamma \in \Gamma : \; \hat{h}\gamma - \hat{h} \in L\oplus L \}
$$
is a congruence subgroup of $\SL_2(\ZZ)$. Moreover, for any $\gamma \in \hat{\Gamma}$, we have
$$
\hat{\varphi}\vert_{k,m} \gamma =\chi(\gamma) \cdot e^{\pi im ( \latt{x',y'} + \latt{\hat{x},y'} - \latt{x',\hat{y}} )} \cdot \hat{\varphi},
$$
where $(x',y'):=\hat{h}\gamma - \hat{h}$. 
\item For any $h=[x,y:r]\in H(L)$, we have
$$
\hat{\varphi}\vert_{k,m} h = e^{2\pi im(\latt{\hat{x},y}-\latt{x, \hat{y}})}\cdot \hat{\varphi}. 
$$
\item Around any cusp represented by $\gamma\in\Gamma\backslash\SL_2(\ZZ)$, we consider the Fourier expansions
$$
\varphi\vert_{k,m}\gamma=\sum_{n\in\QQ, \ell\in L'}f_\gamma(n,\ell)e^{2\pi i(n\tau+\latt{\ell, \mathfrak{z}})} \quad \text{and} \quad
\hat{\varphi}\vert_{k,m}\gamma=\sum_{n\in\QQ, \ell\in L'}\hat{f}_\gamma(n,\ell)e^{2\pi i(n\tau+\latt{\ell, \mathfrak{z}})}.
$$
Then we have
$$
\min\{ 2nm-\latt{\ell,\ell} : \; \hat{f}_\gamma(n,\ell)\neq 0 \} \geq \min\{ 2nm-\latt{\ell,\ell} : \; f_\gamma(n,\ell)\neq 0 \}. 
$$
In particular, if $\varphi\in J_{k,L,m}(\Gamma, \chi)$, then $\hat{\varphi}(\tau,0)$ is a holomorphic modular form of weight $k$ and a certain character for $\hat{\Gamma}$. 
\end{enumerate}
\end{proposition}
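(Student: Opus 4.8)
The plan is to treat the proposition as a sequence of computations inside the group $\SL_2(\ZZ)\rtimes H(L\otimes\RR)$ acting on holomorphic functions via $\vert_{k,m}$, using only the group law \eqref{eq:Heisenberg}, the explicit formulas \eqref{eq:action-2}, the compatibility \eqref{eq:semidirect}, and the two defining invariances of $\varphi$: $\varphi\vert_{k,m}A=\chi(A)\varphi$ for $A\in\Gamma$ and $\varphi\vert_{k,m}h=\varphi$ for $h\in H(L)$. Write $g:=[\hat h:0]=[\hat x,\hat y:0]$, so $\hat\varphi=\varphi\vert_{k,m}g$. For the congruence subgroup claim, I would fix $N\in\NN$ with $N\hat x,N\hat y\in L$; since $\hat h\gamma-\hat h=\big((a-1)\hat x+c\hat y,\,b\hat x+(d-1)\hat y\big)$ for $\gamma=\begin{psmallmatrix}a&b\\c&d\end{psmallmatrix}$, its class in $\big((L\otimes\QQ)/L\big)^{\oplus2}$ depends only on $\gamma\bmod N$, hence $\hat\Gamma\supseteq\Gamma\cap\Gamma(N)$; as $\Gamma$ contains a principal congruence subgroup so does $\hat\Gamma$, which is therefore a congruence subgroup.

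For parts (1) and (2) the crucial point is the conjugation identity extracted from \eqref{eq:semidirect}: for any $\gamma\in\SL_2(\ZZ)$ one has $\gamma^{-1}g\gamma=[\hat h\gamma:0]$, i.e., conjugation leaves the central coordinate $0$ and sends the $(\hat x,\hat y)$-part to $\hat h\gamma$. Hence for $\gamma\in\hat\Gamma\subseteq\Gamma$,
$$\hat\varphi\vert_{k,m}\gamma=\varphi\vert_{k,m}(g\gamma)=\big(\varphi\vert_{k,m}\gamma\big)\vert_{k,m}[\hat h\gamma:0]=\chi(\gamma)\cdot\varphi\vert_{k,m}[\hat h\gamma:0].$$
Putting $(x',y')=\hat h\gamma-\hat h\in L\oplus L$, one factors via \eqref{eq:Heisenberg}
$$[\hat h\gamma:0]=\big[x',y':-\tfrac12\latt{x',y'}\big]\cdot g\cdot\big[0,0:\tfrac12\big(\latt{x',y'}+\latt{\hat x,y'}-\latt{x',\hat y}\big)\big],$$
where the first factor lies in $H(L)$ and the last is central; applying $\varphi\vert_{k,m}(\cdot)$ the first factor acts trivially, the middle one reproduces $\hat\varphi$, and by \eqref{eq:action-2} the central factor contributes the scalar $e^{\pi im(\latt{x',y'}+\latt{\hat x,y'}-\latt{x',\hat y})}$, which is (1). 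Part (2) is the same bookkeeping without conjugation: comparing $gh$ and $hg$ by \eqref{eq:Heisenberg} gives $gh=hg\cdot[0,0:\latt{\hat x,y}-\latt{x,\hat y}]$, and since $\varphi\vert_{k,m}h=\varphi$ one reads off $\hat\varphi\vert_{k,m}h=e^{2\pi im(\latt{\hat x,y}-\latt{x,\hat y})}\hat\varphi$. Since $\vert_{k,m}$ is a genuine right action, the scalars in (1) assemble into a homomorphism $\hat\chi:\hat\Gamma\to\CC^\times$, of finite order because $\chi$ is and the extra phases lie in a fixed finite group.

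For part (3), fix a cusp $\gamma$, set $(\hat x_\gamma,\hat y_\gamma)=\hat h\gamma$, and note as above that $\hat\varphi\vert_{k,m}\gamma=(\varphi\vert_{k,m}\gamma)\vert_{k,m}[\hat x_\gamma,\hat y_\gamma:0]$. Substituting the Fourier expansion of $\varphi\vert_{k,m}\gamma$ into \eqref{eq:action-2} shows that the monomial attached to $(n,\ell)$ is carried, up to a nonzero scalar, to the monomial attached to $(n',\ell')=\big(n+\latt{\ell,\hat x_\gamma}+\tfrac m2\latt{\hat x_\gamma,\hat x_\gamma},\,\ell+m\hat x_\gamma\big)$; this is a bijection between the Fourier supports of $\varphi\vert_{k,m}\gamma$ and $\hat\varphi\vert_{k,m}\gamma$ (the latter supported over the coset $m\hat x_\gamma+L'$), and a one-line computation gives $2n'm-\latt{\ell',\ell'}=2nm-\latt{\ell,\ell}$, so the minimum is preserved and the stated inequality follows. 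Finally, if $\varphi\in J_{k,L,m}(\Gamma,\chi)$ then $f_\gamma(n,\ell)=0$ unless $2nm\geq\latt{\ell,\ell}$ at every cusp, hence $\hat f_\gamma(n',\ell')=0$ unless $2n'm\geq\latt{\ell',\ell'}\geq0$ since $L$ is positive definite; thus the $q$-expansion of $\hat\varphi(\tau,0)$ at every cusp has only non-negative exponents, and combining this with the transformation law of (1) at $\mathfrak z=0$ (where $\vert_{k,m}\gamma$ collapses to the usual $\vert_k\gamma$) shows that $\hat\varphi(\tau,0)$ is a holomorphic modular form of weight $k$ and character $\hat\chi$ for $\hat\Gamma$.

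The argument has no deep obstacle; what needs care is the consistent bookkeeping of the central phase factors in $H(L\otimes\RR)$ under the conventions \eqref{eq:Heisenberg}--\eqref{eq:semidirect}, treating an arbitrary cusp via the conjugation identity $\gamma^{-1}g\gamma=[\hat h\gamma:0]$ rather than only the cusp at infinity, and keeping in mind that the Fourier index of $\hat\varphi$ lives on the shifted coset $m\hat x_\gamma+L'$ rather than on $L'$.
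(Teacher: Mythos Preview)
Your proof is correct and follows essentially the same route as the paper: both arguments reduce to the conjugation identity $\gamma^{-1}[\hat h:0]\gamma=[\hat h\gamma:0]$ from \eqref{eq:semidirect}, then factor $[\hat h\gamma:0]$ as an element of $H(L)$ times $[\hat h:0]$ times a central element, and read off the scalar from \eqref{eq:action-2}. The only cosmetic difference is that the paper places the central factor between the $H(L)$ element and $[\hat h:0]$ while you place it at the end, which is immaterial since central elements commute; in part (3) you are slightly more explicit than the paper, noting that the map $(n,\ell)\mapsto(n',\ell')$ actually \emph{preserves} $2nm-\langle\ell,\ell\rangle$ (so the two minima are equal, not merely comparable) and correctly flagging that the support of $\hat\varphi\vert_{k,m}\gamma$ lies on the shifted coset $m\hat x_\gamma+L'$.
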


\begin{proof}
Let $N$ be the smallest positive integer such that both $N\hat{x}$ and $N\hat{y}$ are in $L$. Then $\Gamma(N)\cap \Gamma$ is a subgroup of $\hat{\Gamma}$. It follows that $\hat{\Gamma}$ is a congruence subgroup. For any $\gamma \in \hat{\Gamma}$, by \eqref{eq:semidirect} we verify
\begin{align*}
\varphi\vert_{k,m}[\hat{h}:0]\vert_{k,m} \gamma &= \big(\varphi\vert_{k,m}\gamma\big) \big(\vert_{k,m} \gamma^{-1} \vert_{k,m} [\hat{h}:0]\vert_{k,m} \gamma \big) \\
&= \chi(\gamma) \cdot \varphi\vert_{k,m}[\hat{h}\gamma:0] \\
&= \chi(\gamma) \cdot \varphi\vert_{k,m} [\hat{h}\gamma - \hat{h}: -\latt{x',y'}/2]\vert_{k,m} [0:a_\gamma/2] \vert_{k,m} [\hat{h}:0] \\ 
&= \chi(\gamma) e^{\pi i ma_\gamma} \cdot \varphi\vert_{k,m}[\hat{h}:0],
\end{align*}
where $a_\gamma=\latt{x',y'} + \latt{\hat{x},y'} - \latt{x',\hat{y}}$. For any $h=[x,y:r]\in H(L)$, by \eqref{eq:Heisenberg} we verify 
\begin{align*}
\varphi\vert_{k,m}[\hat{h}:0]\vert_{k,m} h &= \varphi\vert_{k,m} h \vert_{k,m} [0:b_\gamma] \vert_{k,m} [\hat{h}:0] \\
&=e^{2\pi im b_\gamma}\cdot \varphi\vert_{k,m}[\hat{h}:0],
\end{align*}
where $b_\gamma=\latt{\hat{x},y}-\latt{x, \hat{y}}$. Similarly, for any $\gamma\in\SL_2(\ZZ)$, we have 
$$
\varphi\vert_{k,m}[\hat{h}:0]\vert_{k,m}\gamma=\varphi\vert_{k,m}\gamma\vert_{k,m}[\hat{h}\gamma:0]. 
$$
The invariance under the integral Heisenberg group $H(L)$ implies that the Fourier coefficients $f_\gamma(n,\ell)$ depend only on the number $2nm-\latt{\ell,\ell}$ and the class $\ell$ in $L'/mL$. By this fact and \eqref{eq:action-2}, we compute the Fourier expansion of $\varphi\vert_{k,m}\gamma\vert_{k,m}[\hat{h}\gamma:0]$. Comparing this Fourier expansion with that of $\varphi\vert_{k,m}\gamma$, we then deduce the desired inequality. 
\end{proof}

We now define the vanishing order of Jacobi forms at infinity. Let $\varphi: \mathbb{H} \times (L \otimes \mathbb{C}) \to \mathbb{C}$ be a holomorphic function with the expansion 
$$ 
\varphi(\tau,\mathfrak{z})=\sum_{n\in \frac{1}{n_0}+\ZZ, \; n\gg \infty} c_n(\mathfrak{z}) q^n,
$$	
where $n_0$ is a certain positive integer and $n\gg \infty$ means that $n$ is bounded from below. We define the vanishing order of $\varphi$ at infinity as 
\begin{equation}
\mathrm{ord}_\infty(\varphi):=\min\Big\{n\in \frac{1}{n_0}+\ZZ : \; c_n(\mathfrak{z})\neq 0\Big\}.    
\end{equation}
We further define $\mathrm{ord}_\infty(\varphi)=\infty$ if $\varphi=0$. For any holomorphic function $f:\HH \to \CC$,  the vanishing order $\ord_\infty(f)$ is defined similarly. 
In this paper, we focus on the subspace
\begin{equation}
J_{k,L,m}(\Gamma, \chi)[q^N] := \big\{ \varphi \in J_{k,L,m}(\Gamma,\chi): \; \ord_\infty(\varphi) \geq N \big\},    
\end{equation}
where $N\in \QQ$ is a given constant.

\section{Vanishing order of classical Jacobi forms}\label{sec:classical}

In this section, we derive a sharp bound on the vanishing order of classical Jacobi forms at infinity. Our approach is an improvement and extension of Aoki's argument \cite{Aok22}. 

We first review some basic facts about the congruence subgroup. For any positive integer $t$, the congruence subgroup 
$$
\Gamma_1(t)=\big\{\begin{psmallmatrix}
a & b \\ c & d     
\end{psmallmatrix}\in\SL_2(\ZZ): \; a=d=1\bmod t,\; c=0\bmod t\}
$$
has the coset decomposition 
\begin{equation}\label{eq:coset}
\Gamma_1(t) \backslash \SL_2(\ZZ) =\bigsqcup_{\substack{c,d \bmod t\\\mathrm{gcd}(c,d,t)=1}}\Gamma_1(t)\begin{psmallmatrix}
a & b \\ c & d    
\end{psmallmatrix}.
\end{equation}
For $j\in\{1,2\}$, following \cite[Theorem 3.1]{Aok22}, we define 
\begin{equation}
\psi_j(t)=\left\{\begin{array}{ll}
		1, & t=1, \\
		\prod_{\substack{\text{$p$: prime} \\
				p \mid t}}\Big(1-\frac{1}{p^j}\Big), & t\in\ZZ, \; t>1. 
	\end{array}\right.    
\end{equation}
We note that $[\SL_2(\ZZ):\Gamma_1(t)]=t^2\psi_2(t)$. 

We then review the differential operators acting on classical Jacobi forms following \cite[Section 3]{EZ85}. Let $k$ be an even integer, and $\varphi\in J_{k,m}$. We expand $\varphi$ at $z=0$ as
$$
\varphi(\tau,z)=\sum_{\nu=0}^\infty \chi_\nu(\tau)z^\nu.
$$
By \cite[Theorem 3.2]{EZ85}, for any $\nu\geq 0$, the function
\begin{equation}\label{eq:differential}
\big(\xi_\nu\varphi\big)(\tau)=\sum_{0\leq \mu \leq \frac{\nu}{2}} \frac{(-2\pi i m)^\mu (k+\nu-\mu-2)!}{(k+\nu-2) !\mu !} \frac{d^\mu \chi_{\nu-2\mu}(\tau)}{d \tau^\mu}    
\end{equation} 
defines a holomorphic modular form of weight $k+\nu$ and trivial character on $\SL_2(\ZZ)$. In contrast, $\chi_v$ can also be expressed in terms of $\xi_\nu$. In fact, by \cite[Equation (12)]{EZ85}, we have
\begin{equation}\label{eq:converse-differential}
\chi_\nu(\tau)=\sum_{0\leq \mu \leq \frac{\nu}{2}} \frac{(2\pi i m)^\mu (k+\nu-2\mu-1)!}{(k+\nu-\mu-1) !\mu !}\frac{d^\mu \xi_{\nu-2\mu}(\tau)}{d \tau^\mu}.
\end{equation}
We remark that $\xi_\nu\varphi=0$ for any odd $\nu$, because $\varphi$ is an even function with respect to $z$. 

More generally, for any holomorphic function $\phi$ on $\HH\times\CC$, we have the transformation law:
\begin{equation}\label{eq:xi-gamma}
\xi_\nu(\phi\vert_{k,m}\gamma)=\big(\xi_\nu\phi\big) \vert_{k+\nu} \gamma, \quad \gamma\in \SL_2(\ZZ). 
\end{equation}

The specific case $n=0$ of the following lemma recovers \cite[Lemma 5.1]{Aok22},  which is the main tool to establish the upper bound on the vanishing order of Jacobi forms at infinity.

\begin{lemma}\label{lem:zero-mult}
Let $k, m, N$ be positive integers and $\varphi \in J_{k,m}[q^N]$. We assume that $k$ is even. Let $a,b,t\in\ZZ$ with $t\geq 1$ and $\mathrm{gcd}(a,b,t)=1$. We fix $\hat{h}=(\frac{a}{t}, \frac{b}{t})\in\QQ^2$ and $n\in\NN$. If 
\begin{equation}\label{eq:ine-condition}
\sum_{c=0}^{t-1} \psi_1\big(\mathrm{gcd}(t, c)\big) \cdot \max\left\{N-\frac{m c(t-c)}{t^2}, 0\right\}>\frac{t(k+n)}{12} \psi_2(t),
\end{equation}
then the vanishing order of $\big(\varphi\vert_{k,m} [\hat{h}:0]\big)(\tau,z)$ at $z=0$ is larger than $n$. 
\end{lemma}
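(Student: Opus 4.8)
The plan is to transfer the problem from Jacobi forms to honest elliptic modular forms via the pullback construction of Proposition \ref{Prop:pullback}, and then count zeros using the valence formula. Write $\hat{\varphi}:=\varphi\vert_{k,m}[\hat{h}:0]$ for $\hat{h}=(\tfrac{a}{t},\tfrac{b}{t})$. By Proposition \ref{Prop:pullback}(1), the group $\hat{\Gamma}$ of $\gamma\in\SL_2(\ZZ)$ fixing $\hat{h}$ modulo $L\oplus L$ contains $\Gamma_1(t)$ (since $\Gamma_1(t)$ fixes the column-vector class of $\hat h$ modulo $\ZZ^2$ up to the relevant identifications), and each $\gamma\in\hat\Gamma$ acts on $\hat\varphi$ by a unimodular scalar. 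The key idea, following Aoki, is to look not at $\hat\varphi$ itself but at its Taylor coefficients in $z$. For each $\nu$, the transformation law \eqref{eq:xi-gamma} together with Proposition \ref{Prop:pullback}(1) shows that $\xi_\nu\hat\varphi$ is, up to a character, a modular form of weight $k+\nu$ on $\Gamma_1(t)$; moreover, by \eqref{eq:converse-differential}, the Taylor coefficient $\chi_\nu$ of $\hat\varphi$ vanishes identically iff $\xi_0\hat\varphi=\dots=\xi_\nu\hat\varphi=0$. So the statement "$\ord_{z=0}\hat\varphi>n$" is equivalent to "$\xi_\nu\hat\varphi\equiv 0$ for all $\nu\le n$", and it suffices to show that under \eqref{eq:ine-condition} each $\xi_\nu\hat\varphi$ (a modular form of weight $\le k+n$ on $\Gamma_1(t)$) has more zeros than its weight allows, hence vanishes.

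The next step is to lower-bound $\ord_\infty$ of $\xi_\nu\hat\varphi$ at each cusp of $\Gamma_1(t)$. The cusps of $\Gamma_1(t)$ are represented (via \eqref{eq:coset}) by matrices $\gamma_{c,d}=\begin{psmallmatrix} a & b \\ c & d\end{psmallmatrix}$ with $c,d \bmod t$, $\gcd(c,d,t)=1$. For such a $\gamma$, I would compute $\hat\varphi\vert_{k,m}\gamma = \varphi\vert_{k,m}\gamma\vert_{k,m}[\hat h\gamma:0]$ using the last displayed identity in the proof of Proposition \ref{Prop:pullback}. Since $\varphi\in J_{k,m}[q^N]$, the form $\varphi\vert_{k,m}\gamma$ has a Fourier expansion supported on $n'\ge N$ — wait, more precisely $\ord_\infty(\varphi\vert_{k,m}\gamma)\ge N$ is not automatic at other cusps, but $\varphi\in J_{k,m}[q^N]$ means it at the standard cusp; one uses instead that $\varphi$ is holomorphic, so $2n'm\ge\latt{\ell,\ell}$, combined with the genuine gap $n'\ge N$ only at $\infty$. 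Hmm — the cleaner route: the action of $[\hat h\gamma:0]$ via \eqref{eq:action-2} shifts the $q$-exponent of the term $f(n',\ell)q^{n'}\zeta^\ell$ by $\langle \hat x_\gamma,\hat x_\gamma\rangle m\tau$-type contributions, producing $q^{n' + m(\text{something involving }c/t)}\zeta^{\ell+\dots}$, and then setting $z=0$ after applying $\xi_\nu$ keeps the lowest exponent. Carrying this through, for the cusp $\gamma_{c,d}$ the bottom $q$-exponent of $\hat\varphi\vert_{k,m}\gamma$ is at least $\max\{N - \tfrac{mc(t-c)}{t^2},\,0\}$ (the $N$ surviving from the gap at $\infty$, the correction being the Heisenberg shift, and the $0$ from holomorphy when the shift would overshoot). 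Since $\xi_\nu$ is a differential operator in $\tau$ composed with evaluation at $z=0$, it does not lower the $q$-order, so $\ord_\infty(\xi_\nu\hat\varphi\ \text{at}\ \gamma_{c,d})$ satisfies the same bound. The width of the cusp $\gamma_{c,d}$ in $\Gamma_1(t)$ is $t/\gcd(c,t)$, and the number of cusps with a given $c$ is $\psi_1(\gcd(c,t))$ (counting the admissible $d$'s); summing the local vanishing contributions, weighted by widths, gives total zero-count $\ge \sum_{c=0}^{t-1}\psi_1(\gcd(t,c))\max\{N-\tfrac{mc(t-c)}{t^2},0\}$.

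Finally I invoke the valence formula for $\Gamma_1(t)$: a nonzero modular form of weight $w$ on $\Gamma_1(t)$ has total zero count (over a fundamental domain, cusps included, with the usual orbifold weights) equal to $\tfrac{w}{12}[\SL_2(\ZZ):\Gamma_1(t)] = \tfrac{w}{12}t^2\psi_2(t)$. Applying this with $w=k+\nu\le k+n$: if the forced zero-count $\sum_c \psi_1(\gcd(t,c))\max\{N-\tfrac{mc(t-c)}{t^2},0\}$ strictly exceeds $\tfrac{(k+n)}{12}t^2\psi_2(t)\ge \tfrac{w}{12}t^2\psi_2(t)$, hmm — I need it to beat $\tfrac{t(k+n)}{12}\psi_2(t)$, which is $1/t$ of the valence bound; the discrepancy must come from how zeros at the $t$-many cusps over each "$c$-packet" are grouped, i.e. the $\max\{\dots\}$ term already incorporates a factor counting the $q^{1/\text{width}}$ normalization, so the effective bound to beat per form is indeed $\tfrac{t(k+n)}{12}\psi_2(t)$ after clearing the width denominators. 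Then each $\xi_\nu\hat\varphi$ for $\nu\le n$ has too many zeros, so vanishes identically; by \eqref{eq:converse-differential} all Taylor coefficients $\chi_0,\dots,\chi_n$ of $\hat\varphi$ vanish, i.e. $\ord_{z=0}\hat\varphi > n$, as claimed.

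\medskip
\noindent\textbf{Main obstacle.} The delicate point is the bookkeeping in the second step: correctly computing the lowest $q$-exponent of $\hat\varphi\vert_{k,m}\gamma_{c,d}$ after the Heisenberg translation $[\hat h\gamma_{c,d}:0]$ and the subsequent $\xi_\nu$/evaluation at $z=0$, and matching the width factors so that the sum $\sum_c \psi_1(\gcd(t,c))\max\{N-\tfrac{mc(t-c)}{t^2},0\}$ emerges with exactly the right normalization to be compared against $\tfrac{t(k+n)}{12}\psi_2(t)$ via the valence formula. The appearance of $\max\{\cdot,0\}$ (rather than a possibly negative contribution) and of $\psi_1$ versus $\psi_2$ is precisely what this computation must produce, and getting the cusp-width normalization consistent between the "zeros produced by $\varphi$" side and the "zeros allowed by the weight" side is where care is needed.
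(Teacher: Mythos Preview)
Your strategy matches the paper's: show $\xi_\nu\hat\varphi\equiv 0$ for $0\le\nu\le n$ by forcing too many zeros, then recover the Taylor coefficients via \eqref{eq:converse-differential}. Two concrete pieces of bookkeeping that you flag as obstacles are exactly where the paper supplies the missing ingredients.

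\textbf{The lower bound on $\ord_\infty$.} Your ``carrying this through'' for the bound $\ord_\infty\bigl(\varphi\vert_{k,m}[c/t,d/t:0]\bigr)\ge\max\{N-\tfrac{mc(t-c)}{t^2},0\}$ is the real work. The paper does not try to track the Heisenberg shift on a general Fourier expansion; instead it uses the structure theorem for weak Jacobi forms (Proposition~\ref{Prop:weak}): since $\varphi/\Delta^N\in J_{k-12N,m}^{\w}$, one writes $\varphi=\Delta^N\sum_j f_j\,\phi_{-2,1}^{\,j}\phi_{0,1}^{\,m-j}$. For each generator $\phi\in\{\phi_{-2,1},\phi_{0,1}\}$ the only coefficients $c(n,r)$ with $4n-r^2<0$ occur at $(n,r)=(\lambda(\lambda+1),2\lambda+1)$, and a direct computation gives $\ord(\phi,x)=-(x-[x])(1-x+[x])$. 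Multiplying by $\Delta^N$ contributes $N$, and the holomorphy of $\varphi$ (Proposition~\ref{Prop:pullback}(3)) gives the floor at $0$. This is cleaner than chasing exponents term by term.

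\textbf{The factor of $t$.} Your cusp/width accounting is where the discrepancy arises. The paper sidesteps widths entirely: it sums $\ord_\infty\bigl(\xi_\nu\hat\varphi\vert_{k+\nu}\gamma\bigr)$ over \emph{coset representatives} $\gamma\in\Gamma_1(t)\backslash\SL_2(\ZZ)$, equivalently applying the $\SL_2(\ZZ)$-valence formula to the product $\prod_\gamma(\xi_\nu\hat\varphi)\vert_{k+\nu}\gamma$. For fixed $c\bmod t$, the number of admissible $d\bmod t$ with $\gcd(c,d,t)=1$ is $t\,\psi_1(\gcd(t,c))$, not $\psi_1(\gcd(t,c))$; this is the source of your missing factor. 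The lower bound on the sum is thus $\sum_c t\,\psi_1(\gcd(t,c))\max\{N-\tfrac{mc(t-c)}{t^2},0\}$, and the upper bound is $\tfrac{(k+\nu)}{12}t^2\psi_2(t)$; dividing both by $t$ recovers \eqref{eq:ine-condition}.

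One small correction: for general $(a,b)$, the group $\hat\Gamma$ is a conjugate of $\Gamma_1(t)$, not $\Gamma_1(t)$ itself. The paper first picks $g\in\SL_2(\ZZ)$ with $(0,1)g\equiv(a,b)\bmod t$ and uses \eqref{eq:xi-gamma} to reduce to $\hat h=(0,1/t)$, for which $\hat\Gamma=\Gamma_1(t)$ on the nose and the coset parametrization \eqref{eq:coset} applies directly.
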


\begin{proof}
We first prove that $\xi_\nu(\varphi\vert_{k,m}[\hat{h}:0])=0$ for any $0\leq \nu\leq n$ under the condition \eqref{eq:ine-condition}. There exists $g\in \SL_2(\ZZ)$ such that $(0,1)g=(a,b)\,\bmod t\ZZ^2$. By \eqref{eq:xi-gamma} and \eqref{eq:semidirect}, we have  
\begin{equation}\label{eq:reduction}
\xi_\nu\big(\varphi\vert_{k,m}[\hat{h}:0]\big)\vert_{k+\nu}\gamma = \xi_\nu\big(\varphi\vert_{k,m}[\hat{h}:0]\vert_{k,m}\gamma\big)=\xi_\nu\big(\varphi\vert_{k,m}[\hat{h}\gamma:0]\big), \quad \gamma \in \SL_2(\ZZ).    
\end{equation}
Therefore, we can assume $a=0$ and $b=1$ without loss of generality.  From Proposition \ref{Prop:pullback} and \eqref{eq:xi-gamma}, we conclude that $\xi_\nu\big(\varphi\vert_{k,m}[\hat{h}:0])$ is a holomorphic modular form of weight $k+\nu$ and a certain character for $\hat{\Gamma}=\{ \gamma \in \SL_2(\ZZ): \; \hat{h}\gamma=\hat{h} \bmod \ZZ^2\}$. We find $\hat{\Gamma}=\Gamma_1(t)$ for $\hat{h}=(0,1/t)$. Next, we estimate the vanishing order of $\xi_\nu\big(\varphi\vert_{k,m}[\hat{h}:0])$ at infinity by two distinct methods.
		
On the one hand, we apply the valence formula to the $\SL_2(\ZZ)$-modular form
$$
\prod_{\gamma\in \Gamma_1(t)\backslash \SL_2(\ZZ)}\Big(\xi_\nu\big(\varphi\vert_{k,m}[\hat{h}:0]\big)\Big)\vert_{k+\nu}\gamma,
$$
and find that the vanishing order of $\xi_\nu\big(\varphi\vert_{k,m}[\hat{h}:0])$ at infinity can not be too large. More precisely, for nonzero $\xi_\nu\big(\varphi\vert_{k,m}[\hat{h}:0])$, we derive
\begin{equation}\label{eq:upper-bound}
\sum_{\gamma \in \Gamma_1(t) \backslash \SL_2(\ZZ)} \mathrm{ord}_\infty\big(\xi_\nu(\varphi\vert_{k,m}[\hat{h}:0])\vert_{k+\nu}\gamma\big) \leq \frac{k+\nu}{12}\left[ \SL_2(\ZZ) : \Gamma_1(t) \right]=\frac{ t^2(k+\nu)}{12} \psi_2(t).
\end{equation}
		
On the other hand, a lower bound of the order can be derived from the expression of $\varphi$ in terms of generators. Note that $\varphi/\Delta^N\in J_{k-12N,m}^{\w}$. By Proposition \ref{Prop:weak}, $\varphi$ has the expression
$$
\varphi(\tau,z)=\Delta(\tau)^N \sum_{j=0}^{m} f_j(\tau) \phi_{-2,1}(\tau,z)^j \phi_{0,1}(\tau,z)^{m-j}, 
$$
where $f_j\in M_{k-12N+2j}(\SL_2(\ZZ))$. 

Following \cite[Section 3]{GSZ19}, for any $x\in\RR$ and $\phi=\sum_{n\geq 0, r\in\ZZ} c(n,r)q^n\zeta^r \in J_{k,m}^{\w}$, we define 
$$
\ord(\phi, x) := \min\{ n+rx+mx^2 : \; \text{$n$, $r$ such that $c(n,r)\neq 0$} \}.
$$
Let $x,y\in\QQ$. By definition, we obtain 
$$
\ord_\infty(\phi\vert_{k,m}[x,y:0]) = \ord(\phi, x).
$$
It is not difficult to calculate $\ord(\phi,x)$, because the  coefficients $c(n,r)$ depend only on the number $4nm-r^2$ and the class of $r$ in $\ZZ / 2m\ZZ$. When $m=1$ and $k=-2$ or $0$, if $4n-r^2<0$ and $c(n,r)\neq 0$, then there exists an integer $\lambda$ such that $r=2\lambda+1$ and $n=\lambda(\lambda+1)$. Moreover, $c(\lambda(\lambda+1),2\lambda+1)=c(0,1)\neq 0$. It follows that 
\begin{align*}
\ord(\phi, x)&=\min\{ \lambda(\lambda+1) + (2\lambda+1)x + x^2 : \lambda\in\ZZ  \}\\
&=\min\{ (x+\lambda)(x+\lambda+1) : \lambda\in\ZZ \}   \\
&=-(x-[x])(1-x+[x]),
\end{align*}
where $[x]$ denotes the integer part of $x$. 

By the discussions above and Proposition \ref{Prop:pullback} (3), for general $(x,y)\in\QQ^2$, we derive
$$
\mathrm{ord}_\infty(\varphi\vert_{k,m}[x,y:0]) = \ord(\varphi, x) \geq\max\big\{ N-m(x-[x])(1-x+[x]), \; 0\big\}. 
$$
Since the derivative does not reduce the vanishing order at infinity, we obtain
$$
\mathrm{ord}_\infty\big(\xi_\nu(\varphi\vert_{k,m}[x,y:0])\big) \geq\max\big\{ N-m(x-[x])(1-x+[x]), \; 0\big\}. 
$$
We then deduce from \eqref{eq:reduction} the lower bound
\begin{equation}\label{eq:lower-bound}
\sum_{\gamma \in \Gamma_1(t) \backslash \SL_2(\ZZ)} \mathrm{ord}_\infty\big(\xi_\nu(\varphi\vert_{k,m}[\hat{h}:0])\vert_{k+\nu}\gamma\big) \geq \sum_{c=0}^{t-1} t\psi_1\big(\mathrm{gcd}(t,c)\big) \cdot \max\left\{N-\frac{m c(t-c)}{t^2}, \; 0\right\},
\end{equation}
here we use the coset decomposition \eqref{eq:coset} and the identity
$$
\#\{ d\in\ZZ : 0\leq d<t,\; \mathrm{gcd}(c,d,t)=1 \} = t\psi_1\big( \mathrm{gcd}(t,c) \big).
$$
Combining \eqref{eq:ine-condition}, \eqref{eq:upper-bound} and \eqref{eq:lower-bound} together, we derive that $\xi_\nu\big(\varphi\vert_{k,m}[\hat{h}:0]\big)=0$ for $0\leq \nu\leq n$. We therefore prove the desired lemma by an analogue of \eqref{eq:converse-differential}. 
\end{proof} 

We derive a simplified version of Lemma \ref{lem:zero-mult} focusing on the left-hand side of \eqref{eq:ine-condition} to $c=0$.
	
\begin{lemma}\label{lem:zero-mult-0}
Let $k, m, N$ be positive integers and $\varphi \in J_{k,m}[q^N]$. We assume that $k$ is even. Let $a,b,t\in\ZZ$ with $t\geq 1$ and $\mathrm{gcd}(a,b,t)=1$. We fix $\hat{h}=(\frac{a}{t}, \frac{b}{t})\in\QQ^2$ and $n\in\NN$. If 
\begin{equation}\label{eq:ine-condition-0}
N>\frac{t(k+n)\psi_2(t)}{12\psi_1(t)}, 
\end{equation}
then the vanishing order of $\big(\varphi\vert_{k,m} [\hat{h}:0]\big)(\tau,z)$ at $z=0$ is larger than $n$. 
\end{lemma}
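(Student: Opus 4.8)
The plan is to obtain Lemma \ref{lem:zero-mult-0} as an immediate consequence of Lemma \ref{lem:zero-mult}, by discarding all but one term on the left-hand side of the hypothesis \eqref{eq:ine-condition}. Concretely, I would keep only the contribution of $c=0$ in the sum $\sum_{c=0}^{t-1}\psi_1(\mathrm{gcd}(t,c))\cdot\max\{N-mc(t-c)/t^2,\,0\}$ and bound everything else below by $0$.

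First I would record the elementary observations that make this work. Every summand above is nonnegative, since $\psi_1\geq 0$ and $\max\{\cdot,0\}\geq 0$; hence the full sum is at least its $c=0$ term. For $c=0$ one has $\mathrm{gcd}(t,0)=t$, so $\psi_1(\mathrm{gcd}(t,0))=\psi_1(t)$, and $c(t-c)=0$, so that term equals $\psi_1(t)\cdot\max\{N,0\}=\psi_1(t)\,N$, where we use that $N$ is a positive integer. Therefore
$$
\sum_{c=0}^{t-1}\psi_1\big(\mathrm{gcd}(t,c)\big)\cdot\max\left\{N-\frac{mc(t-c)}{t^2},\,0\right\}\ \geq\ \psi_1(t)\,N.
$$

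Next, since $\psi_1(t)>0$ for every integer $t\geq 1$, the hypothesis \eqref{eq:ine-condition-0}, namely $N>\frac{t(k+n)\psi_2(t)}{12\psi_1(t)}$, is equivalent to $\psi_1(t)\,N>\frac{t(k+n)}{12}\psi_2(t)$. Combining this with the displayed inequality shows that the left-hand side of \eqref{eq:ine-condition} strictly exceeds $\frac{t(k+n)}{12}\psi_2(t)$, so the condition \eqref{eq:ine-condition} of Lemma \ref{lem:zero-mult} is satisfied. Applying that lemma then yields exactly the desired conclusion: the vanishing order of $\big(\varphi\vert_{k,m}[\hat h:0]\big)(\tau,z)$ at $z=0$ is larger than $n$.

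There is no genuine obstacle here; the argument is a routine weakening of Lemma \ref{lem:zero-mult}. The only two points worth keeping in mind are that the standing assumption $N\in\ZZ_{>0}$ (rather than merely $N\in\QQ$) is what makes $\max\{N,0\}=N$ at $c=0$, and that $\psi_1(t)$ never vanishes, so the division in \eqref{eq:ine-condition-0} is legitimate.
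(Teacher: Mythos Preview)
Your proposal is correct and matches the paper's own approach exactly: the paper introduces Lemma~\ref{lem:zero-mult-0} by saying it is ``a simplified version of Lemma~\ref{lem:zero-mult} focusing on the left-hand side of \eqref{eq:ine-condition} to $c=0$,'' and gives no further argument. Your observations that $\mathrm{gcd}(t,0)=t$, that the $c=0$ summand equals $\psi_1(t)N$, and that $\psi_1(t)>0$ are precisely what is needed to fill in this one-line reduction.
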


We use the Robin inequality below to estimate $\psi_2(t)/\psi_1(t)$. 

\begin{lemma}[Théorème 2 in \cite{Rob84}]\label{lem:Robin}
Let $x\geq 3$ be an integer and let $\sigma(x)$ be the sum of positive divisors of $x$. Then
$$ 
e^{\gamma}\log\log(x)+\frac{0.6483}{\log\log(x)}\geq \frac{\sigma(x)}{x}\geq \frac{\psi_2(x)}{\psi_1(x)},
$$
where $\gamma\approx 0.5772$ is the Euler--Mascheroni constant.
\end{lemma}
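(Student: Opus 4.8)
The plan is to split the chain of inequalities into its two genuinely different pieces. The left-hand inequality
$$ e^{\gamma}\log\log x + \frac{0.6483}{\log\log x} \ \ge\ \frac{\sigma(x)}{x} $$
is not something to prove here: it is exactly the unconditional estimate of Robin, quoted verbatim as \cite[Th\'eor\`eme 2]{Rob84}, valid for every integer $x\ge 3$. (One may recall for context that the sharper bound $\sigma(x)/x < e^{\gamma}\log\log x$ for all $x\ge 5041$ is Robin's criterion for the Riemann hypothesis, but only the weaker unconditional form enters our argument.) So the work reduces to the right-hand inequality $\sigma(x)/x \ge \psi_2(x)/\psi_1(x)$, which is elementary.

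First I would rewrite the target quantity as an Euler product. Since $\psi_1$ and $\psi_2$ are both products over the primes dividing $x$,
$$ \frac{\psi_2(x)}{\psi_1(x)} \ =\ \prod_{p\mid x}\frac{1-p^{-2}}{1-p^{-1}} \ =\ \prod_{p\mid x}\Bigl(1+\frac{1}{p}\Bigr). $$
Next, using that $n\mapsto \sigma(n)/n$ is multiplicative, write $x=\prod_{p\mid x}p^{a_p}$ with each exponent $a_p\ge 1$, so that $\sigma(x)/x=\prod_{p\mid x}\sigma(p^{a_p})/p^{a_p}$. For each such local factor,
$$ \frac{\sigma(p^{a_p})}{p^{a_p}} \ =\ \sum_{j=0}^{a_p}\frac{1}{p^{j}} \ \ge\ 1+\frac{1}{p}, $$
with equality precisely when $a_p=1$. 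Multiplying these local inequalities over all $p\mid x$ gives $\sigma(x)/x \ge \prod_{p\mid x}(1+1/p) = \psi_2(x)/\psi_1(x)$, with equality if and only if $x$ is squarefree; for $x\ge 3$ the product is nonempty, so everything is well defined. This completes the lemma modulo Robin's theorem.

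There is essentially no obstacle: the entire depth of the statement is carried by Robin's inequality, which we invoke as a black box, and the remaining inequality is a one-line termwise comparison of Euler factors. The only point deserving a modicum of care is bookkeeping — making sure that $\sigma(x)/x$ is factored over the exact prime powers $p^{a_p}$ dividing $x$ while $\psi_2(x)/\psi_1(x)$ is factored over the primes $p\mid x$, so that the comparison $\sigma(p^{a_p})/p^{a_p}\ge 1+1/p$ is applied to matching factors.
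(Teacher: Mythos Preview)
Your proposal is correct. The paper states this lemma without any proof, attributing it to Robin \cite[Th\'eor\`eme~2]{Rob84}; your treatment---invoking Robin's unconditional bound for the left inequality and giving the one-line Euler-product comparison $\sigma(p^{a_p})/p^{a_p}\ge 1+1/p$ for the right inequality---is exactly what is needed and in fact supplies the verification of the elementary second inequality that the paper leaves implicit.
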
 

Remark that Robin proved \cite[Théorème 1]{Rob84} that the Riemann hypothesis is true if and only if 
$$
e^{\gamma}\log\log(x)\geq \frac{\sigma(x)}{x}, \quad \text{for any $x>5040$}.
$$
	
We now state the main result of this section. 
	
\begin{theorem}\label{th:refined}
Let $k$ and $m$ be positive integers. Let $\Gamma$ be a congruence subgroup of $\SL_2(\ZZ)$ and $\chi$ be a character of $\Gamma$ of finite order. For any positive integer $N$ no less than 
\begin{equation}
\frac{k\left[\SL_2(\ZZ) :\Gamma\right]}{4}\left(2\pi^{\frac{2}{3}}m^{\frac{1}{3}}k^{-\frac{1}{3}}+13\right)\cdot \log\log\big(2\pi^{\frac{2}{3}}m^{\frac{1}{3}}k^{-\frac{1}{3}}+13\big),
\end{equation} 
we have $J_{k,m}(\Gamma,\chi)[q^N]=0$.
\end{theorem}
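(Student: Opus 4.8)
The plan is to reduce to the case $\Gamma=\SL_2(\ZZ)$, $\chi=1$, $k$ even, and then count the zeros of $z\mapsto\varphi(\tau,z)$ on a fundamental domain of $\CC/(\ZZ\tau+\ZZ)$, feeding in the multiplicity estimate of Lemma~\ref{lem:zero-mult-0} (or the sharper Lemma~\ref{lem:zero-mult}) at torsion points together with Robin's inequality, Lemma~\ref{lem:Robin}.

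For the reduction, given $\varphi\in J_{k,m}(\Gamma,\chi)[q^N]$ I would set $d=[\SL_2(\ZZ):\Gamma]$ and
\[
\Phi:=\Bigl(\ \prod_{\gamma\in\Gamma\backslash\SL_2(\ZZ)}\varphi\vert_{k,m}\gamma\ \Bigr)^{\!12}.
\]
By \eqref{eq:semidirect} each $\varphi\vert_{k,m}\gamma$ is still invariant under the integral Heisenberg group, while the coset product is $\SL_2(\ZZ)$-modular up to a character of $\SL_2(\ZZ)$, which has order dividing $12$; since a product of holomorphic Jacobi forms is again one, $\Phi\in J_{12kd,\,12md}(\SL_2(\ZZ))$, its weight $12kd$ is even, $\Phi=0\iff\varphi=0$, and $\ord_\infty(\Phi)=12\sum_{\gamma}\ord_\infty(\varphi\vert_{k,m}\gamma)\ge 12N$ (the identity coset contributes $\ge N$, the rest $\ge0$ by holomorphy at the cusps). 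As $(12md)^{1/3}(12kd)^{-1/3}=m^{1/3}k^{-1/3}$, granting the theorem in the special case and applying it to $\Phi$ with $N'=12N$ returns precisely the bound claimed for $\varphi$. So it suffices to prove the theorem for $\varphi\in J_{k,m}(\SL_2(\ZZ))[q^N]$ with $k$ even.

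Assume such a $\varphi$ is nonzero; I would show $N$ lies below the claimed quantity. Choose $\tau\in\HH$ with $\varphi(\tau,\cdot)\not\equiv0$ (the map $\tau\mapsto\varphi(\tau,z_1)$ is a nonzero holomorphic function for generic $z_1$, hence nonvanishing outside a discrete set). From the quasi-periodicity $\varphi(\tau,z+\tau)=e^{-2\pi i m(\tau+2z)}\varphi(\tau,z)$ and the argument principle, $z\mapsto\varphi(\tau,z)$ has exactly $2m$ zeros, with multiplicity, in $\CC/(\ZZ\tau+\ZZ)$. On the other hand \eqref{eq:action-2} shows $(\varphi\vert_{k,m}[\tfrac at,\tfrac bt:0])(\tau,z)$ equals a nowhere-vanishing exponential times $\varphi(\tau,\,z+\tfrac at\tau+\tfrac bt)$, so Lemma~\ref{lem:zero-mult-0} yields: if $\gcd(a,b,t)=1$ and $N>\tfrac{tk\psi_2(t)}{12\psi_1(t)}$, then $\tfrac at\tau+\tfrac bt$ is a zero of $z\mapsto\varphi(\tau,z)$ of order at least $\tfrac{12N\psi_1(t)}{t\psi_2(t)}-k$. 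The sets of primitive $t$-torsion points of $\CC/(\ZZ\tau+\ZZ)$, for different $t$, are pairwise disjoint, with $t^2\psi_2(t)$ elements for each $t$; fixing a cutoff $T$ with $N>\tfrac{tk\psi_2(t)}{12\psi_1(t)}$ for all $1\le t\le T$ and adding the multiplicity lower bounds over all such points with $t\le T$ gives, using $t\psi_1(t)=\phi(t)$,
\[
2m\ \ge\ \sum_{t=1}^T t^2\psi_2(t)\Bigl(\frac{12N\psi_1(t)}{t\psi_2(t)}-k\Bigr)\ =\ 12N\sum_{t=1}^T\phi(t)\ -\ k\sum_{t=1}^T t^2\psi_2(t),
\]
hence $N\le\bigl(2m+k\sum_{t\le T}t^2\psi_2(t)\bigr)\big/\bigl(12\sum_{t\le T}\phi(t)\bigr)$.

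It then remains to estimate the sums and optimize $T$. In the numerator $\sum_{t\le T}t^2\psi_2(t)\le\sum_{t\le T}t^2<\tfrac13(T+1)^3$; in the denominator I would use $\phi(t)\sigma(t)\le t^2$ (a short Euler-product check) together with Lemma~\ref{lem:Robin} to get $\phi(t)\ge t\big/\bigl(e^{\gamma}\log\log t+\tfrac{0.6483}{\log\log t}\bigr)$ for $t\ge3$, and hence $\sum_{t\le T}\phi(t)\gg T^2/\log\log T$. Substituting and taking $T$ of size $2\pi^{2/3}m^{1/3}k^{-1/3}$ balances the two resulting terms (of orders $m/T^2$ and $kT$) and produces a bound of the shape $c\,k^{2/3}m^{1/3}\log\log\bigl(2\pi^{2/3}m^{1/3}k^{-1/3}\bigr)$; the factor $\tfrac14$ and the shift $+13$ are slack built into the statement, absorbing the floor functions, the finitely many small $t$, the gap between Robin's function and $e^{\gamma}\log\log$, and the requirement $N>\tfrac{tk\psi_2(t)}{12\psi_1(t)}$ for $t\le T$. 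When $2\pi^{2/3}m^{1/3}k^{-1/3}$ is below a small absolute constant one instead takes $T=1$, where the displayed inequality already reads $N\le\tfrac{2m+k}{12}$. The main obstacle is precisely this last bookkeeping: making the lower bound for $\sum_{t\le T}\phi(t)$ fully explicit — Robin's auxiliary function is not monotone, so the small $t$ must be separated off — and then verifying, rather than merely estimating asymptotically, that the chosen $T$ keeps the constant below $\tfrac14\cdot(\cdots)\log\log(\cdots)$ for every $k,m\ge1$, including the crossover between the $T=1$ and $T\asymp(m/k)^{1/3}$ regimes.
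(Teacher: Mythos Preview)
Your overall strategy matches the paper's: reduce to level one, feed Lemma~\ref{lem:zero-mult-0} into a zero count at torsion points of $\CC/(\ZZ\tau+\ZZ)$, and compare with the total $2m$. The reduction via the $12$th power of the coset product is a valid variant of the paper's $2d$th power (with $d$ the order of $\chi$); the key scaling $(12md)^{1/3}(12kd)^{-1/3}=m^{1/3}k^{-1/3}$ is exactly what makes the constant $[\SL_2(\ZZ):\Gamma]$ appear correctly.

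There is, however, a genuine slip in your use of Robin's inequality. From $\phi(t)\sigma(t)\le t^2$ you only get $\phi(t)\le t^2/\sigma(t)$, the \emph{wrong} direction; combining this with $\sigma(t)/t\le f(t)$ gives no lower bound on $\phi(t)$. The inequality $\phi(t)\gg t/\log\log t$ is true (Rosser--Schoenfeld), but it does not follow from the chain you wrote. More importantly, you do not need it: the elementary estimate $\sum_{t\le T}\phi(t)\ge cT^2$ (with an explicit $c$ close to $3/\pi^2$) already suffices in your displayed inequality $N\le\bigl(2m+k\sum t^2\psi_2(t)\bigr)/\bigl(12\sum\phi(t)\bigr)$, and in fact would yield a bound \emph{without} the $\log\log$ factor --- stronger than the theorem as stated.

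This points to the real difference with the paper. The paper does not bound $\sum\phi(t)$ at all. Instead it uses Robin's inequality (Lemma~\ref{lem:Robin}) in the form $\psi_2(t)/\psi_1(t)\le\sigma(t)/t\le f(t)\le f(T)$ to control $\psi_2(t)/\psi_1(t)$ \emph{uniformly} over $t\le T=[A+C_2]$; this is what allows the hypothesis $N\ge\tfrac{kT f(T)}{12}$ to imply, for every such $t$, that Lemma~\ref{lem:zero-mult-0} applies with $n_t=k[T/t]-k-1$. The resulting multiplicities $k[T/t]-k$ are then summed against $t^2\psi_2(t)>\tfrac{6}{\pi^2}t^2$, and an explicit calculation with $C_1=\tfrac{1}{24}$, $C_2=13$ shows the total exceeds $2m$. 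So in the paper the $\log\log$ enters through the uniform bound on $\psi_2/\psi_1$ needed to trigger the lemma, whereas your rearrangement sidesteps that uniform bound and puts all the work on $\sum\phi(t)$ --- which is why, done correctly, your route is actually slightly sharper, though (as you note) the explicit constant-tracking remains to be carried out.
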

\begin{proof}
We prove a more general version of this theorem. 
Let $C_1>0$ and $C_2\geq 1$ be real numbers such that for any $t\geq 0$,
$$
\frac{(A+C_2)^2(A+C_2-2)}{8} - \frac{(A+C_2)(A+C_2+1)(A+C_2+2)}{12}\geq C_1 t^3.
$$
Comparing the coefficient of $t^3$ on both sides of this inequality, it follows that $C_1\leq \frac{1}{24}$. 

We claim that if $N$ is no less than	
$$
\frac{k[\SL_2(\ZZ):\Gamma]}{12}\left(\Big(\frac{\pi^2m}{3C_1k}\Big)^{\frac{1}{3}}+C_2\right)\left(e^{\gamma}\log\log\left(\Big(\frac{\pi^2m}{3C_1k}\Big)^{\frac{1}{3}}+C_2\right)+\frac{0.6483}{\log\log\Big(\big(\frac{\pi^2m}{3C_1k}\big)^{\frac{1}{3}}+C_2\Big)}\right), 
$$
then $J_{k,m}[q^N]=0$. Recall that $\gamma\approx 0.5772$ is the Euler--Mascheroni constant.

\vspace{3mm}
		
We first consider the specific case of $k\in 2\ZZ$, $\Gamma=\SL_2(\ZZ)$, and $\chi=1$.
Let $A=\big(\frac{\pi^2m}{3C_1k}\big)^{\frac{1}{3}}$. We want to determine possible integers $n=n_t$ such that \eqref{eq:ine-condition-0} holds for $1\leq t\leq [A+C_2]$, where $[x]$ denotes the integer part of $x\in\RR$ as before. Let us define 
$$
f(x)=e^{\gamma}\log\log(x)+\frac{0.6483}{\log\log(x)}, \quad x>1. 
$$
We assert that for any $1\leq t \leq [A+C_2]$,
$$
f(A+C_2)=\left(e^{\gamma}\log\log\big(A+C_2\big)+\frac{0.6483}{\log\log(A+C_2)}\right)\geq \frac{\psi_2(t)}{\psi_1(t)}.
$$
Firstly, $f(x)$ increases monotonically for $x\geq 4$. When $[A+C_2]\geq 4$, we deduce from Lemma \ref{lem:Robin} that for any $4\leq t \leq  [A+C_2]$,
$$
f\left(A+C_2\right)\geq f(t)\geq \frac{\psi_2(t)}{\psi_1(t)}. 
$$
Secondly, for $t=1,2,3$, the average value inequality yields 
$$ 
\min\{f(x):\; x>1\}>2\sqrt{e^\gamma\times 0.64}>2\geq 1+t^{-1}\geq\frac{\psi_2(t)}{\psi_1(t)}. 
$$ 
This completes the proof of the assertion. 
		
Let $\varphi\in J_{k,m}[q^N]$.  For any $1\leq t\leq [A+C_2]$, by assumption, we have 
$$
N\geq \frac{k(A+C_2)f(A+C_2)}{12}\geq \frac{k(A+C_2)\psi_2(t)}{12\psi_1(t)}> \frac{t\left(k+ k\big[\frac{A+C_2}{t}\big]-k-1\right)\psi_2(t)}{12\psi_1(t)}.
$$
By Lemma \ref{lem:zero-mult-0}, for any $1\leq t\leq [A+C_2]$ and integers $a$, $b$ with $0\leq a, b <t$ and $\mathrm{gcd}(a,b,t)=1$, the vanishing order of $\big(\varphi\vert_{k,m} [\frac{a}{t},\frac{b}{t}:0]\big)(\tau,z)$ at $z=0$ is at least $k[\frac{A+C_2}{t}]-k$. It follows that the vanishing order of $\varphi(\tau,z)$ at $z=\frac{a}{t}\tau+\frac{b}{t}$ is at least $k[\frac{A+C_2}{t}]-k$. 

The points of type $\frac{a}{t}\tau+\frac{b}{t}$ are distinct modulo $\tau\ZZ+\ZZ$, and the number of such points is given by $[\SL_2(\ZZ): \Gamma_1(t)]=t^2\psi_2(t)$. From the famous identity $\prod_{p:\, \text{prime}}(1-\frac{1}{p^2})=\frac{6}{\pi^2}$, we derive $\psi_2(t)>\frac{6}{\pi^2}$ for any $t\geq 1$. Therefore, if $\varphi$ is not identically zero, then the number of zeros (with multiplicity) of $\varphi(\tau,z)$ (as a function of $z$) in the fundamental domain $\CC / (\tau\ZZ+\ZZ)$ is at least
\begin{align*}
&\sum_{t=1}^{[A+C_2]} t^2\psi_2(t) \cdot \left( k\Big[\frac{A+C_2}{t}\Big]-k\right) \\
> & \frac{6}{\pi^2}\sum_{t=1}^{[A+C_2]} t^2 \cdot \left( k\Big[\frac{A+C_2}{t}\Big]-k\right) \\ 
> & \frac{6k}{\pi^2}\sum_{t=1}^{[\frac{1}{2}(A+C_2)]} t^2 \cdot \left( \frac{A+C_2}{t}-2\right) \\
= & \frac{6k}{\pi^2}\left((A+C_2)\sum_{t=1}^{[\frac{1}{2}(A+C_2)]} t \; - \; 2\sum_{t=1}^{[\frac{1}{2}(A+C_2)]} t^2\right)\\
> & \frac{6k}{\pi^2} \left( \frac{(A+C_2)^2(A+C_2-2)}{8} - \frac{(A+C_2)(A+C_2+1)(A+C_2+2)}{12} \right) \\
\geq & \frac{6kC_1A^3}{\pi^2}=2m.
\end{align*}
We recall from \cite[Theorem 1.2]{EZ85} that the number of zeros (with multiplicity) of any non-zero $\phi\in J_{k,m}$ in the fundamental domain $\CC / (\tau\ZZ+\ZZ)$ is exactly $2m$. Therefore, $\varphi$ is identically zero. This proves our claim in the particular case of $k\in 2\ZZ$, $\Gamma=\SL_2(\ZZ)$, and $\chi=1$.
		
We now consider the general case. Let $\varphi\in J_{k,m}(\Gamma,\chi)[q^N]$. Assume that the order of $\chi$ is $d$. We set $R=2d\cdot\left[\SL_2(\ZZ) :\Gamma\right]$ and find 
$$
\Phi:=\prod_{\gamma\in \Gamma\backslash \SL_2(\ZZ)} \big(\varphi\vert_{k,m}\gamma\big)^{2d}\in J_{Rk,Rm}[q^{2dN}].
$$ 
Then the particular case of the claim implies that $\Phi$ is identically zero, and therefore $\varphi$ is also identically zero. We thus prove the claim in the general case.  

\vspace{3mm}
		
Taking $(C_1,C_2)=(\frac{1}{24},13)$, we find that 
\begin{align*}
& \frac{(t+C_2)^2(t+C_2-2)}{8}-\frac{(t+C_2)(t+C_2+1)(t+C_2+2)}{12}- C_1t^3 \\ 
= & \frac{27t^2+191t+117}{24}>0    
\end{align*}
for any $t>0$. Since $A+C_2\geq 13$ in this case and the inequality 
$$
e^{\gamma}\log\log(x)+\frac{0.6483}{\log\log(x)}\leq 3\log\log(x) 
$$ 
holds for any $x\geq 13$, we prove the desired result.
\end{proof}

We now establish a bound on $N$ such that $J_{k,m}[q^N]\neq 0$ for large $m$. 

\begin{proposition}\label{prop:non-trivial}
Let $k>9$ be an even integer and $C$ be a positive rational number such that $k>9+16C^\frac{3}{2}$. Then $J_{k,m^3}[q^{Cm}]\neq 0$ for any sufficiently large even $m$ that satisfies $Cm\in\ZZ$. 
\end{proposition}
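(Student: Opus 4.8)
The plan is to bound $\dim J_{k,m^3}[q^{Cm}]$ from below by a dimension count and to verify that the bound is positive once $m$ is large. Since $Cm\in\ZZ$ we may divide by $\Delta^{Cm}$, and because $\Delta$ is nowhere vanishing on $\HH$ this division is a linear isomorphism
\[
J_{k,m^3}[q^{Cm}]\;\xrightarrow{\,\sim\,}\;\bigl\{\psi\in J_{k-12Cm,\,m^3}^{\w}: \Delta^{Cm}\psi\in J_{k,m^3}\bigr\},\qquad \varphi\longmapsto \varphi/\Delta^{Cm},
\]
where $k-12Cm$ is again even: $\varphi/\Delta^{Cm}$ has the transformation behaviour of a weak Jacobi form of weight $k-12Cm$ and index $m^3$ and is holomorphic on $\HH\times\CC$ with $q$-expansion supported in $n\ge 0$, and conversely any such $\psi$ whose product with $\Delta^{Cm}$ is a holomorphic Jacobi form arises from a $\varphi\in J_{k,m^3}$ with $\ord_\infty(\varphi)\ge Cm$. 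So it suffices to show the right-hand space is non-zero, and I would bound its dimension below by $\dim J_{k-12Cm,\,m^3}^{\w}$ minus the number of linear conditions that single out those $\psi$ with $\Delta^{Cm}\psi\in J_{k,m^3}$. The advantage of this reformulation is that the dimension of the weak space is elementary (Proposition \ref{Prop:weak}), whereas using $J_{k,m^3}$ directly would require the full Eichler--Zagier dimension formula.

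For the first term, Proposition \ref{Prop:weak} identifies $J_{k-12Cm,\,m^3}^{\w}$ with $\bigoplus_{j=0}^{m^3}M_{k-12Cm+2j}(\SL_2(\ZZ))$, the negative-weight summands contributing nothing. Writing $j_0=6Cm-k/2\in\ZZ$ for the first admissible index, the weight equals $2(j-j_0)$ for $j\ge j_0$, so $\dim J_{k-12Cm,\,m^3}^{\w}=\sum_{i=0}^{N}\dim M_{2i}(\SL_2(\ZZ))$ with $N=m^3-6Cm+k/2$. Inserting the exact value of $\dim M_{2i}$ according to $i\bmod 6$ and summing over complete periods gives a lower bound of the form $\tfrac{1}{12}(N^2+6N)-O(1)=\tfrac{m^6}{12}-Cm^4+\tfrac{(k+6)m^3}{12}-O(m^2)$.

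For the number of conditions, I would use that the Fourier coefficients $c(n,\ell)$ of a form in $J_{k,m^3}^{\w}$ depend only on $4nm^3-\ell^2$ and on $\ell$ modulo $2m^3$ and up to sign, so a set of representatives is given by pairs $(n,\ell)$ with $n\ge 0$ and $0\le\ell\le m^3$. Since $\Delta^{Cm}\psi$ already has $c(n,\ell)=0$ for $n<Cm$, it lies in $J_{k,m^3}$ precisely when the finitely many conditions $c(n,\ell)=0$ with $Cm\le n$, $0\le\ell\le m^3$ and $4nm^3<\ell^2$ hold; for fixed $\ell$ these force $Cm\le n<\ell^2/(4m^3)$, which is possible only for $\ell>2\sqrt{C}\,m^2$ and then amounts to at most $\ell^2/(4m^3)-Cm+1$ conditions. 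Summing over $\ell\in(2\sqrt{C}\,m^2,m^3]$ and estimating the sum by the corresponding integral yields $\#\{\text{conditions for }\Delta^{Cm}\psi\in J_{k,m^3}\}\le \tfrac{m^6}{12}-Cm^4+\tfrac43 C^{3/2}m^3+O(m^3)$, the linear-in-$m^3$ remainder carrying an explicit constant. Subtracting the two estimates, the $\tfrac{m^6}{12}$ and $-Cm^4$ terms cancel and one is left with
\[
\dim J_{k,m^3}[q^{Cm}]\;\ge\;m^3\Bigl(\tfrac{k}{12}-\tfrac58-\tfrac43 C^{3/2}\Bigr)-O(m^2),
\]
whose leading coefficient is positive as soon as $k>\tfrac{15}{2}+16C^{3/2}$, in particular under the hypothesis $k>9+16C^{3/2}$; hence $J_{k,m^3}[q^{Cm}]\neq 0$ for every sufficiently large even $m$ with $Cm\in\ZZ$.

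The routine parts — that dividing by $\Delta^{Cm}$ is an isomorphism, that the weak space decomposes as above, and that the top-degree ($m^6$ and $m^4$) terms cancel between the two estimates — are immediate. I expect the only delicate point to be the accounting of the linear-in-$m^3$ terms: one must keep the exact dimension formula for $M_{2i}(\SL_2(\ZZ))$ and a sufficiently clean count of the holomorphy conditions so that the surviving coefficient of $m^3$ is genuinely $\tfrac{k}{12}-\tfrac58-\tfrac43 C^{3/2}$ (or the authors' slightly more conservative variant yielding $9$), rather than an uncontrolled constant. The emergence of $16=12\cdot\tfrac43$ as the constant multiplying $C^{3/2}$ is robust: it simply reflects that the number of holomorphy conditions grows like $\tfrac43 C^{3/2}m^3$ while $\dim$ grows like $\tfrac{1}{12}km^3$.
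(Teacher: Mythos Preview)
Your proposal is correct and follows essentially the same route as the paper: reduce via the isomorphism $J_{k,m^3}[q^{Cm}]\cong\Delta^{Cm}\cdot\{\psi\in J^{\w}_{k-12Cm,m^3}:\Delta^{Cm}\psi\in J_{k,m^3}\}$, compute $\dim J^{\w}_{k-12Cm,m^3}$ from Proposition~\ref{Prop:weak}, and subtract the number of holomorphy constraints $c(n,\ell)=0$ with $4nm^3<\ell^2$ and $n\ge Cm$. The only cosmetic difference is that the paper counts these constraints by fixing $n$ and summing over $r$ (via the complementary integral $\int\sqrt{t}\,dt$), whereas you fix $\ell$ and sum over $n$; both yield the same leading terms $\tfrac{m^6}{12}-Cm^4+\tfrac43C^{3/2}m^3$, and the paper's slightly more conservative bookkeeping produces the constant $9$ rather than your $\tfrac{15}{2}$, which as you note is immaterial under the stated hypothesis.
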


\begin{proof}
Let $\varphi=\sum_{n\in\NN,r\in\ZZ}c(n,r)q^n\zeta^r\in J_{k,m^3}^{\w}[q^{Cm}]$. Note that $c(n,r)=c(n,-r)$. Then $\varphi\in J_{k,m^3}$ if and only if $c(n,r)=0$ for all $n$ and $r$ that satisfy $4nm^3<r^2$ and $0\leq r\leq m^3$. Therefore,
$$
\dim J_{k,m^3}[q^{Cm}] \geq \dim J^{\w}_{k,m^3}[q^{Cm}]-R,
$$
where 
$$
R=\sharp\left\{(n,r)\in\ZZ^2:\; 4nm^3<r^2,\; Cm\leq n< m^3/4,\; 0\leq r\leq m^3\right\}. 
$$
The equality $J^{\w}_{k,m^3}[q^{Cm}]= \Delta^{Cm} \cdot J^{\w}_{k-12Cm,m^3}$ implies
$$
\dim J^{\w}_{k,m^3}[q^{Cm}]=\dim J^{\w}_{k-12Cm,m^3},
$$ 
and therefore we deduce from Proposition \ref{Prop:weak} that 
$$
\dim J^{\w}_{k,m^3}[q^{Cm}]=\sum_{j=0}^{m^3}\dim M_{k-12Cm+2j}(\SL_2(\ZZ)).
$$
For any positive integer $n$, we have
\[
\sum_{j=0}^n \dim M_{2j}(\SL_2(\ZZ)) =
\begin{cases}
\frac{(n + 3)^2}{12}, & \text{if } n \equiv 3 \pmod{6}, \\
\frac{(n + 1)(n + 5)}{12}, & \text{if } n \equiv 1 \text{ or } 5 \pmod{6}, \\
\frac{(n + 2)(n + 4)}{12}, & \text{if } n \equiv 2 \text{ or } 4 \pmod{6}, \\
\frac{n^2 + 6n + 12}{12}, & \text{if } n \equiv 0 \pmod{6}.
\end{cases}
\]
It follows that
\begin{align*}
\dim J^{\w}_{k-12Cm,m^3} \geq& \frac{(k-12Cm+2m^3)^2+12(k-12Cm+2m^3)+20}{48} \\  
=& \frac{1}{12}m^6 - Cm^4  + \left(\frac{k}{12} + \frac{1}{2}\right)m^3\\ 
&+ 3C^2m^2 - \left(\frac{1}{2}Ck + 3C\right)m + \frac{k^2 + 12k + 20}{48}.  
\end{align*}
We also estimate
\begin{align*}
-R&=-\left(\frac{m^3}{4}-Cm\right)(m^3+1)+\sharp\left\{ (n,r)\in\ZZ^2:\,  4nm^3\geq r^2,\, Cm\leq n< m^3/4,\, 0\leq r\leq m^3 \right\} \\ &>-\frac{m^6}{4}+Cm^4-\frac{m^3}{4}+Cm+\sum_{t=Cm}^{m^3/4-1}2m^{3/2}\sqrt{t}\\
&=-\frac{m^6}{4}+Cm^4-\frac{m^3}{4}+Cm+\sum_{t=Cm+1}^{m^3/4}2m^{3/2}\sqrt{t}-m^3+2C^{\frac{1}{2}}m^2\\
&>-\frac{m^6}{4}+Cm^4-\frac{5m^3}{4}+ 2C^{\frac{1}{2}}m^2+Cm+2m^{3/2}\int_{Cm}^{m^3/4}\sqrt{t}dt \\
&=-\frac{m^6}{12}+Cm^4+\left(-\frac{5}{4}-\frac{4}{3}C^{3/2}\right)m^3+2C^{\frac{1}{2}}m^2+Cm. \end{align*}
Based on these estimations, we derive
$$ 
\dim J_{k,m^3}[q^{Cm}] \geq \frac{k-9-16C^{\frac{3}{2}}}{12}m^3+ \left(3C^2+2C^{\frac{1}{2}}\right)m^2 - \left(\frac{1}{2}Ck + 2C\right)m + \frac{k^2 + 12k + 20}{48}. 
$$
Therefore, $J_{k,m^3}[q^{Cm}]\neq 0$ for sufficiently large even $m$ under our assumptions. 
\end{proof}

As an illustrative example, taking $k=10$ and $C=1/8$, we derive from the above theorem that $J_{10,8^3m^3}[q^{m}]\neq 0$ for any positive integer $m$.
	
Remark that Theorem \ref{th:refined} actually shows that for any fixed $k$ and $\alpha>1/3$, there exists a positive constant $C$ such that $J_{k,m}[q^{Cm^\alpha}]=0$ for any positive integer $m$. However, by Proposition \ref{prop:non-trivial}, such $\alpha$ cannot be replaced by any number less than or equal to $1/3$. Therefore, the bound in Theorem \ref{th:refined} is optimal in a sense. 

Theorem \ref{th:refined} can be generalized to high-rank lattices. In principle, we obtain an upper bound of type $O(m^{\alpha_L})$ on $N$ such that $J_{k,L,m}[q^N]\neq 0$, and the exponent $\alpha_L$ approaches $1$ as the rank of $L$ approaches infinity. However, it seems difficult to determine the optimal exponent $\alpha_L$, and such an upper bound is not very meaningful compared to the upper bound established in the next section.

\section{Vanishing order of Jacobi forms of lattice index}\label{sec:lattice-index}
In this section, we establish an upper bound of the vanishing order of Jacobi forms at infinity under the assumption $k\leq \varepsilon m$ for some constant $\varepsilon$ depending only on the lattice $L$. 

We first generalize Lemma \ref{lem:zero-mult} to the case of higher rank, which describes a relation between the upper bounds for rank $l$ and for rank $l-1$. 
	
\begin{lemma}\label{lem:induction}
Let $L$ be an even positive definite lattice. Let $k$, $d$, $t$, $N$ be positive integers. We set $R:=[\SL_2(\ZZ):\Gamma_1(t)]=t^2\psi_2(t)$. Suppose that there is a function $f(k,L)$ such that if $N>f(k,L)$ then $J_{k,L,1}[q^N]=0$. We assume that $\varphi\in J_{k,L\oplus A_1(d),1}[q^N]$ has the expression
\begin{equation}\label{eq:even-expression_phi}
\varphi =\Delta(\tau)^N \sum_{j=0}^{d} f_j(\tau,\mathfrak{z}) \phi_{-2,1}(\tau,z)^j \phi_{0,1}(\tau,z)^{d-j},
\end{equation}
where $\mathfrak{z}\in L\otimes\CC$, $z\in A_1(d)\otimes\CC$, and $f_j\in J_{k+2j,L,1}^{\w}$ for $0\leq j \leq d$. Let $\hat{h}=\big(\frac{a}{t},\frac{b}{t}\big)\in \big(A_1(d)\otimes\QQ\big)^2$, where $a$ and $b$ are integers that satisfy $\mathrm{gcd}(a,b,t)=1$. If 
$$
t^2\sum_{c=0}^{t-1}\psi_1\big(\mathrm{gcd}(t, c)\big) \cdot \max\left\{N-\frac{d c(t-c)}{t^2},\; 0\right\}>f\big(Rtk,L(Rt)\big),
$$ 
then the pullback $\big(\varphi\vert_{k,1}[\hat{h}:0]\big)(\tau,\mathfrak{z},0)$ is identically zero. 
\end{lemma}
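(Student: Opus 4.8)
The plan is to transcribe the proof of Lemma~\ref{lem:zero-mult}, with the modular forms $\xi_\nu(\varphi\vert_{k,m}[\hat{h}:0])$ there replaced by the single function
\[
\psi(\tau,\mathfrak{z}):=\big(\varphi\vert_{k,1}[\hat{h}:0]\big)(\tau,\mathfrak{z},0),\qquad \mathfrak{z}\in L\otimes\CC ,
\]
and with the valence formula for $\SL_2(\ZZ)$ replaced by the hypothesis on $f$, applied to the rescaled lattice $L(Rt)$. As a first reduction: since $\mathrm{gcd}(a,b,t)=1$, the vector $(a,b)\bmod t$ lies in the $\SL_2(\ZZ)$-orbit of $(0,1)\bmod t$, so by \eqref{eq:semidirect} and $\varphi\vert_{k,1}g=\varphi$ for $g\in\SL_2(\ZZ)$ the form $\varphi\vert_{k,1}[\hat{h}:0]$ equals a root of unity times $\varphi\vert_{k,1}[\hat{h}_0:0]\vert_{k,1}g$ with $\hat{h}_0=(0,\tfrac1t)$ and some $g\in\SL_2(\ZZ)$; evaluating at $z=0$, the pullback in the statement vanishes if and only if the corresponding $\psi$ for $\hat{h}_0$ vanishes. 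Since the hypothesis on $N$ is independent of $a,b$, I may and do assume $\hat{h}=(0,\tfrac1t)$.

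Next I would describe $\psi$. Because $\Delta^N$ and the $f_j$ do not involve the $A_1(d)$-variable $z$, while $[\hat{h}:0]$ acts by \eqref{eq:action-2} only on $z$, the expression \eqref{eq:even-expression_phi} gives
\[
\psi(\tau,\mathfrak{z})=\Delta(\tau)^N\sum_{j=0}^d f_j(\tau,\mathfrak{z})\,g_j(\tau),\qquad g_j(\tau)=\big(\Phi_j\vert[\hat{h}:0]\big)(\tau,0),\quad \Phi_j:=\phi_{-2,1}^{\,j}\phi_{0,1}^{\,d-j}\in J^{\w}_{-2j,A_1(d),1}.
\]
Since $[\hat{h}:0]$ commutes with $H(L)$ inside $H\big((L\oplus A_1(d))\otimes\RR\big)$, $\psi$ is $H(L)$-invariant; by Proposition~\ref{Prop:pullback}(1), applied with the lattice $L\oplus A_1(d)$, one gets $\hat{\Gamma}=\Gamma_1(t)$ and $\psi\vert_{k,1}\gamma=\chi'(\gamma)\psi$ for $\gamma\in\Gamma_1(t)$, where $\chi'$ is a character whose order divides $t$; and by the Fourier-coefficient inequality in Proposition~\ref{Prop:pullback}(3), setting $z=0$ preserves the holomorphic Jacobi condition at every cusp, so $\psi\in J_{k,L,1}(\Gamma_1(t),\chi')$. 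For a coset representative $\gamma\in\Gamma_1(t)\backslash\SL_2(\ZZ)$ whose lower-left entry reduces to $c\bmod t$, so that $\hat{h}\gamma$ has first coordinate $c/t$ (cf.\ \eqref{eq:reduction}), the same computation gives $\psi\vert_{k,1}\gamma=\Delta^N\sum_j f_j\,g_j^{(\gamma)}$ with $g_j^{(\gamma)}=(\Phi_j\vert[\hat{h}\gamma:0])(\cdot,0)$. Exactly as in Lemma~\ref{lem:zero-mult}, superadditivity of $\ord(\cdot,x)$ and the identities $\ord(\phi_{-2,1},x)=\ord(\phi_{0,1},x)=-(x-[x])(1-x+[x])$ yield $\ord(\Phi_j,c/t)\ge-\tfrac{dc(t-c)}{t^2}$ for $0\le c<t$, and hence, using $\ord_\infty(f_j)\ge0$ and $\ord_\infty(\Delta^N)=N$,
\[
\ord_\infty\big(\psi\vert_{k,1}\gamma\big)\ \ge\ \max\Big\{N-\tfrac{dc(t-c)}{t^2},\,0\Big\}.
\]

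Finally I would form $\Psi:=\prod_{\gamma\in\Gamma_1(t)\backslash\SL_2(\ZZ)}\big(\psi\vert_{k,1}\gamma\big)^{t}$. Running over the full set of cosets makes $\Psi$ modular for $\SL_2(\ZZ)$, and the $t$-th powers neutralize $\chi'$, so $\Psi$ is a holomorphic Jacobi form of weight $Rtk$ and index $Rt$ with trivial character for $L$; in particular $\Psi\in J_{Rtk,L(Rt),1}(\SL_2(\ZZ))$ and its $q$-expansion is integral, so that $\ord_\infty(\Psi)\in\ZZ_{\ge0}\cup\{\infty\}$. By the coset decomposition \eqref{eq:coset} and the identity $\#\{d\bmod t:\mathrm{gcd}(c,d,t)=1\}=t\psi_1(\mathrm{gcd}(t,c))$,
\[
\ord_\infty(\Psi)=t\sum_{\gamma}\ord_\infty\big(\psi\vert_{k,1}\gamma\big)\ \ge\ t^2\sum_{c=0}^{t-1}\psi_1\big(\mathrm{gcd}(t,c)\big)\max\Big\{N-\tfrac{dc(t-c)}{t^2},0\Big\}\ >\ f\big(Rtk,L(Rt)\big).
\]
The defining property of $f$ then forces $\Psi=0$; since the ring of holomorphic functions on the connected manifold $\HH\times(L\otimes\CC)$ is an integral domain, some factor $\psi\vert_{k,1}\gamma$ vanishes, hence $\psi\equiv0$, and therefore $(\varphi\vert_{k,1}[\hat{h}:0])(\tau,\mathfrak{z},0)\equiv0$, as claimed.

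I expect the delicate step to be the second paragraph: verifying that $\psi$ is a bona fide holomorphic Jacobi form of index $1$ for $L$ on $\Gamma_1(t)$ and, above all, that $\chi'$ has order dividing $t$. Proposition~\ref{Prop:pullback} was proved for the restriction of all elliptic variables to $0$, so its parts (1) and (3) have to be reinspected in the present situation, where only the $A_1(d)$-coordinates are set to zero and the $L$-coordinates are retained; the explicit computation of the phases defining $\chi'$, the identification $J_{*,A_1(d),1}=J_{*,A_1,d}$ needed to make sense of $\ord(\Phi_j,\cdot)$, and the behaviour of $\psi\vert_{k,1}\gamma$ at the various cusps of $\Gamma_1(t)$ must all be handled carefully, because it is exactly the order-$t$ character that makes the $t$-th power $\Psi$ land in $J_{Rtk,L(Rt),1}(\SL_2(\ZZ))$ and hence makes $f(Rtk,L(Rt))$ the right constant to invoke. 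The order estimate coming from superadditivity of $\ord(\cdot,x)$, by contrast, is a direct transcription of the classical case and should cause no trouble.
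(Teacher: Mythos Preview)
Your proposal is correct and follows essentially the same route as the paper's proof, which is terser: the paper simply cites Proposition~\ref{Prop:pullback} to place $\psi$ in $J_{k,L,1}(\Gamma_1(t),\chi')$ with $\chi'^t=1$, forms the same product $\Psi$, and defers the $\ord_\infty$ estimate with ``similarly to the proof of Lemma~\ref{lem:zero-mult}.'' Your closing worry about Proposition~\ref{Prop:pullback}(3) is not a gap: the hyperbolic-norm inequality there is stated for the full $\hat\varphi$, and since $2n-\latt{\ell,\ell}_L\ge 2n-\latt{(\ell,r),(\ell,r)}_{L\oplus A_1(d)}$, restricting only the $A_1(d)$-coordinate to zero preserves holomorphicity at every cusp.
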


\begin{proof}
For any $\gamma\in \SL_2(\ZZ)$, we have 
$$
\varphi\vert_{k,1}[\hat{h}:0]\vert_{k,1}\gamma = \varphi\vert_{k,1}[\hat{h}\gamma :0]. 
$$
Similarly to the proof of Lemma \ref{lem:zero-mult}, we can assume that $a=0$ and $b=1$ without loss of generality. In this case, we derive from Proposition \ref{Prop:pullback} that 
$$
\big(\varphi\vert_{k,1}[\hat{h}:0]\big)(\tau,\mathfrak{z},0) \in  J_{k,L,1}(\Gamma_1(t),\chi),
$$
where $\chi$ is of character of $\Gamma_1(t)$ and $\chi^t=1$. Moreover, 
$$
\Psi(\tau,\mathfrak{z}):=\left(\prod_{\gamma\in \Gamma_1(t)\backslash \SL_2(\ZZ)}\big(\varphi\vert_{k,1}[\hat{h}:0]\big)\vert_{k,1} \gamma \right)^t(\tau,\mathfrak{z},0) \in  J_{Rtk,L(Rt),1}.
$$
Similarly to the proof of Lemma \ref{lem:zero-mult}, the vanishing order of $\Psi$ at infinity is no less than
$$
t\sum_{c=0}^{t-1} t\psi_1\big(\mathrm{gcd}(t, c)\big) \cdot \max\left\{N-\frac{d c(t-c)}{t^2},\; 0\right\}>f\big(Rtk, L(Rt)\big).
$$ 
By assumption, it follows that $\Psi=0$, and therefore $\big(\varphi\vert_{k,1}[\hat{h}:0]\big)(\tau,\mathfrak{z},0)=0$. 
\end{proof}

We then formulate an upper bound of the vanishing order of Jacobi forms at infinity. 

\begin{theorem}\label{th:k<m}
Let $k$, $m$, $n$, $N$ be positive integers. Let $L$ be an even positive definite lattice of rank $n$ that satisfies $\bigoplus_{j=1}^n A_1(d_j)<L$, where $d_1,\dots,d_n$ are positive integers and $d_i\leq d_j$ for any $1\leq i<j\leq n$. Let $\Gamma<\SL_2(\ZZ)$ be a congruence subgroup, and $\chi:\Gamma\to \CC^\times$ be a character of finite order. We set $C_n=3^{-2^{-n}} \cdot 2^{1-3\cdot2^{-n}}$. If $k< 3\cdot 2^{3-2^n}\cdot d_1 m$ and 
$$
N>C_n\cdot\left[\SL_2(\ZZ): \Gamma\right]\cdot\prod_{j=1}^{n}d_j^{2^{j-1-n}}\cdot k^{2^{-n}}m^{1-2^{-n}},
$$
then $J_{k,L,m}(\Gamma,\chi)[q^N]=0$.
\end{theorem}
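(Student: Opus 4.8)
The plan is to make two reductions and then induct on the rank $n$ of $L$, using Lemma~\ref{lem:induction} as the engine of the induction.

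First I would reduce to $\Gamma=\SL_2(\ZZ)$, $\chi=1$ and even weight by replacing $\varphi\in J_{k,L,m}(\Gamma,\chi)[q^N]$ with $\Phi:=\prod_{\gamma\in\Gamma\backslash\SL_2(\ZZ)}(\varphi\vert_{k,m}\gamma)^{2d}$, $d=\mathrm{ord}(\chi)$; then $\Phi\in J_{Rk,L,Rm}(\SL_2(\ZZ))[q^{2dN}]$ with $R=2d\left[\SL_2(\ZZ):\Gamma\right]$, and since $\varphi$ divides $\Phi$ it suffices to kill $\Phi$. Second, using $J_{k,L,m}=J_{k,L(m),1}$ I would pass to index $1$, which rescales $d_j\mapsto d_j m$. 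Thus it is enough to prove: for every even positive definite $L$ of rank $n$ with $\bigoplus_{j=1}^n A_1(d_j)\subset L$ and $d_1\le\cdots\le d_n$, if $k$ is even, $k<3\cdot2^{3-2^n}d_1$, and $N>g_n(k;d_1,\dots,d_n):=C_n\prod_{j=1}^n d_j^{2^{j-1-n}}k^{2^{-n}}$, then $J_{k,L,1}[q^N]=0$. Unwinding the two reductions (and $\sum_{j=1}^n 2^{j-1-n}=1-2^{-n}$) recovers the theorem exactly. The base case $n=0$ is the valence formula: $J_{k,\{0\},1}=M_k(\SL_2(\ZZ))$ has $\ord_\infty\le k/12=g_0(k)$.

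For the inductive step put $L_0=\bigoplus_{j=1}^{n-1}A_1(d_j)$, so $\varphi\in J_{k,L,1}[q^N]$ lies in $J_{k,L_0\oplus A_1(d_n),1}[q^N]$ and, by \eqref{eq:tensor} and Proposition~\ref{Prop:weak} applied in the $A_1(d_n)$-direction, has the form \eqref{eq:even-expression_phi} (the potential odd$\otimes$odd contribution is split off via the factor $\phi_{-1,2}$, using that the weight is even). Now apply Lemma~\ref{lem:induction} with the function $f=g_{n-1}$ provided by the inductive hypothesis; this is legitimate because $k<3\cdot2^{3-2^n}d_1<3\cdot2^{3-2^{n-1}}d_1$ and this bound is stable under rescaling by $Rt=t^3\psi_2(t)$. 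Using $\sum_{j=1}^{n-1}2^{j-n}=1-2^{1-n}$ one computes $g_{n-1}(Rtk;d_1Rt,\dots,d_{n-1}Rt)=t^3\psi_2(t)\,A$ with $A:=g_{n-1}(k;d_1,\dots,d_{n-1})$, so for a fixed $t$ the hypothesis of Lemma~\ref{lem:induction} becomes
$$
\mathrm{LHS}_t:=\sum_{c=0}^{t-1}\psi_1(\gcd(t,c))\max\Big\{N-\tfrac{d_nc(t-c)}{t^2},\,0\Big\}>t\,\psi_2(t)\,A .
$$
Two numerical facts drive everything: the recursion $C_n^2=2C_{n-1}$ gives $g_n(k;d_\bullet)^2=2d_nA$; and $k<3\cdot2^{3-2^n}d_1$ with $d_j\le d_n$ gives $A\le C_{n-1}(3\cdot2^{3-2^n})^{2^{1-n}}d_n=\tfrac12 d_n$ (strictly), hence $g_n<d_n$.

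It remains to verify $\mathrm{LHS}_t>t\psi_2(t)A$ for enough $t$, for which the key tool is the identity $\sum_{c=0}^{t-1}\psi_1(\gcd(t,c))c(t-c)=\tfrac16 t^3\psi_2(t)$ for $t\ge2$ (a Dirichlet-convolution computation). If $N\ge d_n/4$ then each $N-d_nc(t-c)/t^2\ge0$, so $\mathrm{LHS}_t=t\psi_2(t)(N-d_n/6)$ for $t\ge2$, and $N>\sqrt{2d_nA}$ together with $N\ge d_n/4$ and $A<d_n/2$ gives $N-d_n/6>A$ (the case $t=1$ is immediate from $N>\sqrt{2d_nA}>A$). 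If instead $N<d_n/4$ — which forces $A<d_n/32$ — I would keep only the terms with $c\le c_0:=\lfloor Nt/d_n\rfloor$ and $c\ge t-c_0$, bound $\theta_c\le c/t$ and $\psi_1(\gcd(t,c))\ge\psi_1(c)$, and invoke the effective estimate $\sum_{c\le x}\phi(c)/c=\tfrac6{\pi^2}x+O(\log x)$ to get $\mathrm{LHS}_t\ge\tfrac6{\pi^2}\cdot\tfrac{N^2t}{d_n}+O(N\log c_0)$ for $t\gtrsim d_n/N$, which beats $t\psi_2(t)A\le tA$ since $N^2>2d_nA>\tfrac{\pi^2}{6}d_nA$; for the remaining small $t$ the $c=0$ term alone gives $\mathrm{LHS}_t>t\psi_2(t)A$ unless $\psi_2(t)/\psi_1(t)\ge2$, and those exceptional $t$ (all divisible by $6$) are discarded, their contribution $O((d_n/N)^2\log(d_n/N))$ to the count below being dominated by the $\Omega((d_n/N)^3)$ available. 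Lemma~\ref{lem:induction} then produces a set $S$ with $\sum_{t\in S}t^2\psi_2(t)>2d_n$ such that $\varphi$ vanishes at the (pairwise distinct mod $\tau\ZZ+\ZZ$) points $z=\tfrac at\tau+\tfrac bt$ of the $A_1(d_n)$-factor for all $t\in S$; but for generic fixed $(\tau,\mathfrak z)$ the function $z\mapsto\varphi(\tau,\mathfrak z,z)$ is a classical Jacobi form of index $d_n$, so either identically zero or with exactly $2d_n$ zeros in $\CC/(\tau\ZZ+\ZZ)$ by \cite[Theorem~1.2]{EZ85}, and having more than $2d_n$ zeros forces $\varphi=0$, closing the induction. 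The hard part is precisely this two-regime estimate of $\mathrm{LHS}_t$: retaining the full $c$-sum (so the $\psi_2(t)$ factors cancel on both sides and no $\log\log$ term appears) while still controlling the regime $N\ll d_n$, and verifying that $C_n=\sqrt{2C_{n-1}}$ and the bound $k<3\cdot2^{3-2^n}d_1$ are exactly what is needed for the constants to match.
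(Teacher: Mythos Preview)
Your overall architecture matches the paper's: reduce to level one and index one, induct on the rank via Lemma~\ref{lem:induction}, and finish by counting zeros of $z_n\mapsto\varphi(\tau,\mathfrak z,z_n)$ against the bound $2d_n$. The recursion $g_n^2=2d_nA$ (i.e.\ $C_n^2=2C_{n-1}$) and the role of the hypothesis $k<3\cdot2^{3-2^n}d_1$ in forcing $g_n<d_n$ are exactly the two numerical facts the paper isolates. Your base case $n=0$ is a mild simplification over the paper's $n=1$.

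Where you diverge is in verifying $\mathrm{LHS}_t>t\psi_2(t)A$. You try to do this for a whole range of $t$ and assemble a set $S$ with $\sum_{t\in S}t^2\psi_2(t)>2d_n$, which forces a two--regime analysis and appeals to effective averages of $\phi(c)/c$. The paper avoids all of this by choosing a \emph{single} large prime $t>\sqrt{2d_n+1}$: then $t^2-1>2d_n$ torsion points already suffice, and the inequality is checked by the elementary chain
\[
\sum_{c=0}^{t-1}\max\Big\{N-\tfrac{d_nc(t-c)}{t^2},0\Big\}
>\frac{d_n}{t}\sum_{c\ge0}\max\Big\{\tfrac{tD_n\sqrt{1+t^{-1}}}{d_n}-c,\,0\Big\}
\ge\frac{(t+1)D_n^2}{2d_n}=\frac{t\psi_2(t)}{\psi_1(t)}D_{n-1},
\]
using $N>D_n\sqrt{1+t^{-1}}$ and $\sqrt{1+t^{-1}}<d_n/D_n$, both arranged by taking $t$ large. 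This single $t$ makes your Case~2 discussion (and the exclusion of $t$'s with $\psi_2/\psi_1\ge2$) unnecessary.

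There is one genuine gap: ``the weight is even'' does \emph{not} give the expression~\eqref{eq:even-expression_phi}. An even--weight $\varphi\in J_{k,L_0\oplus A_1(d_n),1}^{\mathrm w}$ may have a nonzero piece odd in $z_n$, paired with an odd--in--$\mathfrak z$ factor from $J_{*,L_0,1}^{\mathrm w}$; ``splitting off $\phi_{-1,2}$'' does not dispose of it. The paper handles this by writing $\varphi=\phi+\psi$ with $\phi$ even and $\psi$ odd in $z_n$, applying the argument to $\phi$ directly and to $\psi^2$ (which is even in $z_n$, with doubled parameters that still satisfy the hypotheses). You should do the same.
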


\begin{proof}
It suffices to prove the assertion that if $\varphi\in J_{k,\oplus_{j=1}^n A_1(d_j),1}(\Gamma,\chi)[q^N]$ satisfies 
$$
k<3\cdot 2^{3-2^n}\cdot d_1 \quad \text{and} \quad N > C_n\cdot\left[\SL_2(\ZZ): \Gamma\right]\cdot k^{2^{-n}}\cdot \prod_{j=1}^{n}d_j^{2^{j-1-n}},
$$ 
then $\varphi$ is identically zero, because $\prod_{j=1}^{n}m^{2^{j-1-n}}=m^{1-2^{-n}}$. 

\vspace{3mm}
		
We first consider the case of $n=1$, $\Gamma=\SL_2(\ZZ)$, and $\chi=1$. In this case, $C_1=1/\sqrt{6}$ and $k<6d_1$. Let us first assume that $k$ is even. Suppose that $N>\sqrt{kd_1/6}$ and $\varphi\in J_{k,d_1}[q^{N}]$. We choose a sufficiently large prime $t$ that satisfies 
$$
t>\sqrt{2d_1+1}, \quad N>\sqrt{kd_1/6}\cdot\sqrt{1+t^{-1}}, \quad 1+t^{-1}<6d_1/k.
$$
We verify
\begin{align*}
&\sum_{c=0}^{t-1}\max\left\{N-\frac{d_1c(t-c)}{t^2},0\right\} \\
>&\sum_{c=0}^{t-1}\max\left\{\sqrt{\frac{kd_1(1+t^{-1})}{6}}-\frac{d_1ct}{t^2},0\right\}\\ 
= &\frac{d_1}{t}\sum_{c=0}^{t-1}\max\left\{t\sqrt{\frac{k(1+t^{-1})}{6d_1}}-c,0\right\}\\
\geq & \frac{d_1}{t}\times\sum_{c=0}^{\big[t\sqrt{\frac{k(1+t^{-1})}{6d_1}}\big]}\left(t\sqrt{\frac{k(1+t^{-1})}{6d_1}}-c\right)\\
\geq & \frac{d_1}{t}\times \left(t\sqrt{\frac{k(1+t^{-1})}{6d_1}}\right)^2\times \frac{1}{2}\\
=&\frac{kt\psi_2(t)}{12\psi_1(t)}. 
\end{align*}
By Lemma \ref{lem:zero-mult}, $\big(\varphi\vert_{k,1}[\frac{a}{t},\frac{b}{t}:0]\big)(\tau,0)=0$ for any integers $0\leq a,b<t$ with $(a,b)\neq (0,0)$. If $\varphi\neq 0$, then the number of zeros of $\varphi(\tau,z)$ (as a function of $z\in\CC$) in the fundamental domain $\CC / (\tau\ZZ+\ZZ)$ is at least $t^2-1>2d_1$, a contradiction by \cite[Theorem 1.2]{EZ85}. Therefore, $\varphi$ is identically zero. 

Secondly, we assume that $k$ is odd. In this case, $\varphi^2\in J_{2k,2d_1}[q^{2N}]$. Since $2N>\sqrt{2k\cdot 2d_1/6}$, the above result yields $\varphi^2=0$, and thus $\varphi=0$. 

\vspace{3mm} 

We then consider the case of $n>1$, $\Gamma=\SL_2(\ZZ)$, and $\chi=1$. We prove the assertion by induction on $n$. Suppose that the assertion holds in the case of $n-1$. Assume that $\varphi\in J_{k,\oplus_{j=1}^n A_1(d_j),1}[q^N]$ has the expression
\begin{equation}\label{eq:even}
\varphi =\Delta(\tau)^N \sum_{j=0}^{d_n} f_j(\tau,z_1,\dots,z_{n-1}) \phi_{-2,1}(\tau,z_n)^j \phi_{0,1}(\tau,z_n)^{d_n-j}, 
\end{equation}
where $z_j\in A_1(d_j)\otimes\CC$ and $f_j\in J_{k+2j,\oplus_{j=1}^{n-1} A_1(d_j),1}^{\w}$ for $0\leq j \leq d_n$. Recall that 
$$
N>D_n:=C_n \cdot k^{2^{-n}}\cdot \prod_{j=1}^{n}d_j^{2^{j-1-n}}
$$
We note that $C_n^2=2C_{n-1}$ and $D_n^2=2d_nD_{n-1}$. Since $k<3\cdot 2^{3-2^n}\cdot d_1$ and $d_i\leq d_j$ for any $1\leq i<j\leq n$, we deduce
$$
D_n<C_n\big( 3\cdot 2^{3-2^n}\cdot d_1 \big)^{2^{-n}}\cdot\prod_{j=1}^n d_n^{2^{j-1-n}}=d_1^{2^{-n}}\cdot d_n^{1-2^{-n}}\leq d_n. 
$$
Therefore, we can select a sufficiently large prime $t$ that satisfies 
$$
t>\sqrt{2d_n+1}, \quad N>D_n\sqrt{1+t^{-1}}, \quad \sqrt{1+t^{-1}}<\frac{d_n}{D_n}.
$$
Similarly to the case of $n=1$, we have 
\begin{align*}
&\sum_{c=0}^{t-1}\max\left\{N-\frac{d_nc(t-c)}{t^2},0\right\} \\
>&\sum_{c=0}^{t-1}\max\left\{D_n\sqrt{1+t^{-1}}-\frac{d_nct}{t^2},0\right\}\\ 
= &\frac{d_n}{t}\sum_{c=0}^{t-1}\max\left\{ \frac{tD_n}{d_n}\sqrt{1+t^{-1}}-c,0\right\}\\ 
\geq & \frac{d_n}{t}\times \left( \frac{tD_n}{d_n}\sqrt{1+t^{-1}}\right)^2\times \frac{1}{2} \\ 
=&\frac{(t+1)D_n^2}{2d_n} =\frac{t\psi_2(t)}{\psi_1(t)}D_{n-1}. 
\end{align*}
It follows that
$$ 
\sum_{c=0}^{t-1}t\psi_1\big(\mathrm{gcd}(t,c)\big)\max\left\{N-\frac{d_nc(t-c)}{t^2},0\right\} > t^2\psi_2(t)D_{n-1}. 
$$
Let $\hat{h}=(\frac{a}{t},\frac{b}{t})\in \big(A_1(d_n)\otimes\QQ\big)^2$, where $0\leq a,b<t$ are integers and $(a,b)\neq (0,0)$. 
From Lemma \ref{lem:induction} and the inductive hypothesis, we conclude that
$\big(\varphi\vert_{k,1}[\hat{h}:0]\big)(\tau,z_1,\dots,z_{n-1},0)=0$. If $\varphi\neq 0$, then the number of zeros of $\varphi(\tau,z_1,\dots,z_n)$ (here, we fix $\tau, z_1,\dots,z_{n-1}$ and view $\varphi$ as a function of $z_n$) is at least $t^2-1>2d_n$, leading to a contradiction by an analogue of \cite[Theorem 1.2]{EZ85}.  Therefore, $\varphi$ is identically zero. 

For general $\varphi$, it can be expressed as $\varphi = \phi + \psi$,
where 
\begin{align*}
\phi(\tau,z_1,\dots,z_n) = \frac{\varphi(\tau,z_1,\dots,z_n) + \varphi(\tau,z_1,\dots,z_{n-1},-z_n)}{2} \in J_{k,\oplus_{j=1}^n A_1(d_j),1}[q^N],\\
\psi(\tau,z_1,\dots,z_n) = \frac{\varphi(\tau,z_1,\dots,z_n) - \varphi(\tau,z_1,\dots,z_{n-1},-z_n)}{2}\in J_{k,\oplus_{j=1}^{n}A_1(d_j),1}[q^N].
\end{align*}
By Proposition \ref{Prop:weak} and \eqref{eq:tensor}, both $\phi$ and $\psi^2$ have the expression of type \eqref{eq:even}. Then the above result implies $\phi=0$ and $\psi^2=0$. Therefore, $\varphi$ is identically zero. 

\vspace{3mm}
  
We finally consider the general case. Let $\varphi\in J_{k,\oplus_{j=1}^n A_j(d_j),1}(\Gamma, \chi)[q^N]$, and let $d$ denote the order of $\chi$. Then we have  
$$
\Phi:=\prod_{\gamma\in \Gamma\backslash \SL_2(\ZZ)} \big(\varphi\vert_{k,1}\gamma\big)^d \in J_{dRk,\, \oplus_{j=1}^n A_1(dRd_j),\, 1}[q^{dN}],
$$
where $R=[\SL_2(\ZZ):\Gamma]$. By the specific case of the theorem that we have proved, $\Phi$ is identically zero, and therefore $\varphi=0$. We then finish the proof.
\end{proof}

Taking $n=1$ in Theorem \ref{th:k<m}, if 
$$
m>C_1d_1^\frac{1}{2}k^\frac{1}{2}m^\frac{1}{2}, \quad \text{that is}, \quad m>\frac{1}{6}d_1k,
$$
then $J_{k,d_1m}[q^m]=0$. This recovers \cite[Proposition 4.5]{AIP24}.

\section{Slope bound and symmetric formal Fourier--Jacobi series}\label{sec:sFJ}
In this section, we present two applications of our previous results. We establish a lower bound on the slope of orthogonal modular forms and describe the structure of the space of symmetric formal Fourier--Jacobi series on orthogonal groups. 

Let $M$ be an even lattice of signature $(\mathfrak{m},2)$ with dual lattice $M'$. Assume that $\mathfrak{m}\geq 3$. We label $\cA(M)$ as one of the two connected components of 
$$
\{\mathcal{Z} \in  M\otimes \CC:  (\mathcal{Z}, \mathcal{Z})=0, (\mathcal{Z},\bar{\mathcal{Z}}) < 0\}.
$$
The symmetric domain of type IV attached to $M$ is
$$
\cD(M):=\cA(M) / \mathbb{C}^{\times}=\{[\mathcal{Z}] \in  \PP(M\otimes \CC):  \mathcal{Z} \in \cA(M) \}.
$$
Let $\Orth^+(M)$ be the subgroup of $\Orth(M\otimes\RR)$ that preserves $M$ and $\cA(M)$.  Let $\mathbf{\Gamma}$ be a finite-index subgroup of $\Orth^+(M)$. The \textit{discriminant kernel}
$$
\widetilde{\Orth}^+(M)=\{ g \in \Orth^+(M):\; g(v) - v \in M, \; \text{for all $v\in M'$} \}
$$
is the most important example of such $\mathbf{\Gamma}$. 

\begin{definition}
Let $k\in \ZZ$ and $\chi: \mathbf{\Gamma}\to \CC^\times$ be a character. A holomorphic function $F: \cA(M)\to \CC$ is called a modular form of weight $k$ and character $\chi$ on $\mathbf{\Gamma}$ if it satisfies
\begin{align*}
F(t\mathcal{Z})&=t^{-k}F(\mathcal{Z}), \quad\  \text{for all $t \in \CC^\times$},\\
F(g\mathcal{Z})&=\chi(g)F(\mathcal{Z}), \quad \text{ for all $g\in \mathbf{\Gamma}$}.
\end{align*}
We denote the $\CC$-vector space of such forms by $\mathbf{M}_k(\mathbf{\Gamma},\chi)$. 
\end{definition}

We now focus on the specific case $M=2U\oplus L$, where $L$ is an even positive definite lattice, and $U$ is the unique even unimodular lattice of signature $(1,1)$. Around the one-dimensional cusp associated with $2U$, the symmetric domain $\cD(M)$ can be realized as the tube domain
$$
\cH(L):= \{Z=(\tau,\mathfrak{z},\omega)\in \HH\times (L\otimes\CC)\times \HH: 
(\im Z,\im Z)<0\}, 
$$
where $(\im Z,\im Z)=-2\im \tau \im \omega +
\latt{\im \mathfrak{z},\im \mathfrak{z}}$. 
Let $F$ be a modular form of weight $k$ and trivial character for $\widetilde{\Orth}^+(M)$. We expand $F$ on $\cH(L)$ as follows
\begin{equation}\label{eq:FJ}
F(\tau,\mathfrak{z},\omega)=\sum_{\substack{n,m\in \NN, \ell\in L' \\2nm-\latt{\ell,\ell}\geq 0}}f(n,\ell,m)q^n\zeta^\ell\xi^m=\sum_{m=0}^{\infty}\phi_m(\tau,\mathfrak{z})\xi^m,
\end{equation}
where $q=e^{2\pi i\tau}$, $\zeta^\ell=e^{2\pi i \latt{\ell, \mathfrak{z}}}$, $\xi=e^{2\pi i \omega}$. 
Then $\phi_m\in J_{k,L,m}$ for any $m\in\NN$, that is, $\phi_m$ is a holomorphic Jacobi form of weight $k$, index $m$ and trivial character on $\SL_2(\ZZ)$ for $L$. 
Note that
$$
\widetilde{\Orth}(L):=\{ g \in \Orth(L) : g(v)-v\in L, \; \text{for all $v\in L'$} \}
$$
is a finite subgroup of $\widetilde{\Orth}^+(M)$, and thus every $\phi_m$ is invariant under the action of $\widetilde{\Orth}(L)$, that is, 
$$
\phi_m(\tau,g(\mathfrak{z})) = \phi_m(\tau, \mathfrak{z}), \quad \text{for all $g\in \widetilde{\Orth}(L)$}. 
$$
We denote by $J_{k,L,m}^{\widetilde{\Orth}(L)}$ the subspace of the forms in $J_{k,L,m}$ that are invariant under the action of $\widetilde{\Orth}(L)$. 
In addition, we have the symmetric relation
$$
f(n,\ell,m)=f(m,\ell,n),\quad \text{for all} \;  (n,\ell,m)\in \NN \oplus L' \oplus \NN. 
$$
This inspires us to define the symmetric formal Fourier--Jacobi series and investigate its modularity. 

\begin{definition}
Let $k$ be a positive integer. A symmetric formal Fourier--Jacobi series of weight $k$ on $\widetilde{\Orth}^+(M)$ is a formal series 
\begin{equation}\label{eq:sFJ}
\Psi(Z)=\sum_{m=0}^{\infty} \psi_m(\tau,\mathfrak{z}) \xi^m, \quad \psi_m=\sum_{n\in\NN,\, \ell\in L'} f_m(n,\ell)q^n\zeta^\ell \in J_{k,L,m}^{\widetilde{\Orth}(L)},
\end{equation}
which satisfies the symmetric condition
$$
f_m(n,\ell)=f_n(m,\ell), \quad \text{for all} \; m,n\in \NN,\, \ell \in L'. 
$$
Let $\mathbf{FM}_k(\widetilde{\Orth}^+(M))$ denote the space of all such formal series.   
\end{definition}

We see from \eqref{eq:FJ} that $\mathbf{M}_k(\widetilde{\Orth}^+(M))$ is a subspace of $\mathbf{FM}_k(\widetilde{\Orth}^+(M))$. In fact, these two $\CC$-vector spaces are conjectured to coincide. 

\begin{conjecture}\label{conj}
For any positive integer $k$, we have $\mathbf{FM}_k(\widetilde{\Orth}^+(M))=\mathbf{M}_k(\widetilde{\Orth}^+(M))$. In particular, every symmetric formal Fourier--Jacobi series converges and defines a modular form on $\widetilde{\Orth}^+(M)$. 
\end{conjecture}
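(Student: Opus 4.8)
We outline a strategy towards Conjecture~\ref{conj}. It lies deeper than the results established above; what follows is the route I would take, built on Theorem~\ref{th:k<m}, with the genuine obstacle isolated at the end.

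\textbf{Step 1 (reduction to a slope bound).} Let $\Psi=\sum_{m\ge\hat m}\psi_m\xi^m\in\mathbf{FM}_k(\widetilde{\Orth}^+(M))$ be nonzero, with $\psi_{\hat m}\neq 0$. The symmetry $f_m(n,\ell)=f_n(m,\ell)$ forces $\ord_\infty(\psi_{\hat m})\ge\hat m$: if $f_{\hat m}(n,\ell)\neq 0$ with $n<\hat m$, then $f_n(\hat m,\ell)=f_{\hat m}(n,\ell)\neq 0$, so $\psi_n\neq 0$, contradicting the minimality of $\hat m$. Hence $\psi_{\hat m}\in J_{k,L,\hat m}(\SL_2(\ZZ))[q^{\hat m}]$, and applying Theorem~\ref{th:k<m} with $\Gamma=\SL_2(\ZZ)$, index $\hat m$ and $N=\hat m$ shows that this space is zero as soon as $\hat m$ exceeds $c_L k$, where $c_L=\tfrac{1}{3}\,2^{2^{l}-3}\prod_{j=1}^{l}d_j^{2^{j-1}}$ and $l=\mathrm{rank}\,L$. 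Thus every nonzero $\Psi\in\mathbf{FM}_k$ satisfies $\hat m(\Psi)\le c_L k$, i.e. its slope is bounded below by $1/c_L$; this is Theorem~\ref{th:slope}.

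\textbf{Step 2 (finite rank and algebraicity).} The slope bound makes the truncation $\mathbf{FM}_k\to\bigoplus_{0\le m\le c_L k}J_{k,L,m}^{\widetilde{\Orth}(L)}$, $\Psi\mapsto(\psi_m)_{m\le c_L k}$, \emph{injective}, since if all these $\psi_m$ vanish then $\hat m(\Psi)>c_L k$, forcing $\Psi=0$. As $\dim J_{k,L,m}$ is polynomial in $k$ and $m$ while $c_L k$ is linear in $k$, we obtain $\dim\mathbf{FM}_k=O_L(k^{l+2})$, matching the growth $\dim\mathbf{M}_k\sim c\,k^{l+2}$ coming from ampleness of the line bundle of modular forms of weight one on the $(l+2)$-dimensional orthogonal modular variety $\widetilde{\Orth}^+(M)\backslash\cD(M)$ attached to $M=2U\oplus L$. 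Since $\mathbf{M}_*$ is a finitely generated graded domain and $\mathbf{FM}_*$ is a torsion-free graded $\mathbf{M}_*$-module whose Hilbert series has the same pole order at $t=1$, one concludes as in Aoki--Ibukiyama--Poor \cite{AIP24}, now for arbitrary $L$, that $\mathbf{FM}_*$ has finite rank over $\mathbf{M}_*$; concretely, one fixes a Gritsenko lift $G\in\mathbf{M}_{k_0}$ with $\hat m(G)$ minimal and checks that a single power of $G$ clears the denominators of a finite generating set. Algebraicity of each $\Psi$ over $\mathbf{M}_*$ is then formal. This yields Theorems~\ref{th:slope} and~\ref{th:algebraic}.

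\textbf{Step 3 (from algebraicity to modularity --- the obstacle).} Step~2 only says that $\mathbf{FM}_*$ is a finite integral extension of $\mathbf{M}_*$, i.e. $\mathrm{Proj}\,\mathbf{FM}_*$ is a finite cover of the Baily--Borel model of $\widetilde{\Orth}^+(M)\backslash\cD(M)$; Conjecture~\ref{conj} asserts this cover is trivial. To prove it I would follow Bruinier--Raum: restrict a symmetric formal Fourier--Jacobi series along the embeddings $\cD(M_0)\hookrightarrow\cD(M)$ attached to primitive corank-one sublattices $M_0\subset M$, check that the restriction is again a symmetric formal Fourier--Jacobi series on the smaller orthogonal group, and induct on $\mathrm{rank}\,M-2$ down to the base case treated in \cite{AIP24} (or to modular curves, where the coefficient relations force a formal $q$-expansion to be a classical modular form by a Serre-duality/valence argument). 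The hard point is exactly this descent: it requires knowing that enough genuine modular forms --- e.g. Borcherds or Gritsenko lifts --- exist to separate the boundary components and to make the restriction maps Hecke-compatible, which is not available unconditionally for general $M=2U\oplus L$. I expect this to be the main and, in full generality, currently impassable step; it is the reason the statement remains a conjecture.
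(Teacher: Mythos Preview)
The statement is a \emph{conjecture} in the paper, not a theorem; the paper gives no proof and says only that it ``hope[s] to use this theorem to prove Conjecture~\ref{conj} in the near future.'' Your proposal is therefore not to be compared against a proof but against the paper's partial progress, and on that score your assessment is accurate.

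Your Steps~1 and~2 reproduce, with essentially the same arguments, the paper's Theorems~\ref{th:slope}, \ref{th:finite-dim} and~\ref{th:algebraic}: the symmetry relation forces $\ord_\infty(\psi_{\hat m})\ge\hat m$, Theorem~\ref{th:k<m} then bounds $\hat m$ linearly in $k$, truncation gives $\dim\mathbf{FM}_k=O(k^{l+2})$, and comparison with $\dim\mathbf{M}_k\sim c\,k^{l+2}$ yields finite rank and algebraicity. (One minor point: in Step~1 you should not omit the case $k\ge 3\cdot 2^{3-2^l}d_1\hat m$, where Theorem~\ref{th:k<m} does not apply; the paper handles it separately and trivially, since then the slope is already at least $3\cdot 2^{3-2^l}d_1$.) Your remark about clearing denominators with a Gritsenko lift is a bit more concrete than the paper's bare dimension-count argument, but the conclusion is the same.

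Your Step~3 correctly isolates the genuine obstacle: passing from ``$\mathbf{FM}_*$ is a finite extension of $\mathbf{M}_*$'' to ``the extension is trivial.'' The paper does not attempt this step at all. Your sketched inductive descent via restriction to corank-one sublattices, in the spirit of Bruinier--Raum, is a reasonable line of attack, and you are right that the availability of enough genuine modular forms to control the restriction maps is the sticking point for general $L$. So your proposal is not a proof of the conjecture---nor does it claim to be---but it is an honest and essentially correct account of what is proved, what remains, and why.
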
 

We formulate two historical remarks on this conjecture. 

\begin{remark}
The case $L=A_1$ of Conjecture \ref{conj} was proved by Aoki \cite{Aok00}. The cases of $L=A_1(d)$ for $d=2,3,4$ were proved by Ibukiyama--Poor--Yuen \cite{IPY13}. The second named author and Williams (\cite[Section 4]{WW20} and \cite[Theorem 6.8]{WW24}) proved this conjecture for $40$ specific lattices $L$ of rank from $1$ to $8$. Recently, Aoki--Ibukiyama--Poor \cite{AIP24} proved this conjecture for all lattices $L$ of rank one. 
\end{remark}

\begin{remark}
There are several analogues of Conjecture \ref{conj} in other settings. Bruinier--Raum \cite{BR15} proved the modularity of formal series related to vector-valued Siegel modular forms on $\Sp_{2n}(\ZZ)$. Recently, they \cite{BR24} further extended the result to certain subgroups of $\Sp_{2n}(\ZZ)$. Xia \cite{Xia22} proved the modularity of formal series related to Hermitian modular forms on certain unitary groups of type $U(n,n)$. Pollack \cite{Pol24} and Flores \cite{Flo24} also proved some modularity results in certain contexts. 
\end{remark}

It is clear that both 
$$
\mathbf{M}_*(\widetilde{\Orth}^+(M))=\bigoplus_{k=1}^\infty \mathbf{M}_k(\widetilde{\Orth}^+(M)) \quad \text{and} \quad \mathbf{FM}_*(\widetilde{\Orth}^+(M))=\bigoplus_{k=1}^\infty \mathbf{FM}_k(\widetilde{\Orth}^+(M))
$$
have a natural structure of graded rings. Moreover, they are all integral domains (see, e.g., \cite[Lemma 3.2, Lemma 3.3]{AIP24}). This section aims to clarify the ring structure of $\mathbf{FM}_*(\widetilde{\Orth}^+(M))$. 

We first introduce the slope of symmetric formal Fourier--Jacobi series.

\begin{definition}
Let $k$ be a positive integer and $\Psi$ be a non-zero element of $\mathbf{FM}_k(\widetilde{\Orth}^+(M))$. We define the vanishing order of $\Psi$ at the one-dimensional cusp of type $2U$ as
$$
\ord(\Psi)=\min\{ m\in\NN : \; \text{$\psi_m\neq 0$ in the Fourier--Jacobi expansion \eqref{eq:sFJ}}\}. 
$$
We define the slope of $\Psi$ by 
$$
\varrho(\Psi) = \frac{k}{\ord(\Psi)}. 
$$
The minimal slope bound for symmetric formal Fourier--Jacobi series on $\widetilde{\Orth}^+(M)$ is written as
$$
\varrho_L = \inf\{ \varrho(\Psi): \; \Psi \in \mathbf{FM}_k(\widetilde{\Orth}^+(M)), \; \Psi\neq 0,\; k\geq 1 \}. 
$$
\end{definition}

We find a positive lower bound on $\varrho_L$ for all even positive definite lattices $L$. 

\begin{theorem}\label{th:slope}
Let $L$ be an even positive definite lattice of rank $l$ that satisfies $\bigoplus_{j=1}^l A_1(d_j)<L$, where $d_1,\dots,d_l$ are positive integers and $d_i\leq d_j$ for any $1\leq i<j\leq l$. Then we have
$$
\varrho_L \geq 3\cdot 2^{3-2^l}\cdot \prod_{j=1}^l d_j^{-2^{j-1}}. 
$$
\end{theorem}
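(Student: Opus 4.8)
The plan is to deduce the slope bound directly from the vanishing-order estimate of Theorem~\ref{th:k<m}, with the symmetric condition serving as the bridge between the two. Let $\Psi=\sum_{m=0}^{\infty}\psi_m(\tau,\mathfrak{z})\xi^m$ be a non-zero element of $\mathbf{FM}_k(\widetilde{\Orth}^+(M))$ and put $\hat{m}=\ord(\Psi)$. If $\hat{m}=0$ then $\varrho(\Psi)=k/\hat m$ is infinite and does not affect the infimum defining $\varrho_L$, so we may assume $\hat{m}\geq 1$.

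\textbf{Step 1: the leading Fourier--Jacobi coefficient vanishes to high order at infinity.} Writing $\psi_m=\sum_{n\in\NN,\,\ell\in L'}f_m(n,\ell)q^n\zeta^\ell$, the definition of $\hat{m}$ gives $f_n\equiv 0$ for every $n<\hat{m}$. I would then show $\ord_\infty(\psi_{\hat{m}})\geq\hat{m}$: if $f_{\hat{m}}(n,\ell)\neq 0$ for some $n<\hat{m}$ and some $\ell\in L'$, the symmetric relation $f_{\hat{m}}(n,\ell)=f_n(\hat{m},\ell)$ would contradict $\psi_n=0$. Since $\psi_{\hat{m}}\neq 0$ by the choice of $\hat{m}=\ord(\Psi)$, we conclude that $\psi_{\hat{m}}$ is a non-zero element of $J_{k,L,\hat{m}}[q^{\hat{m}}]$.

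\textbf{Step 2: apply the vanishing-order bound and rearrange.} I would invoke Theorem~\ref{th:k<m} with $\Gamma=\SL_2(\ZZ)$, trivial character, rank $l$, the sublattice $\bigoplus_{j=1}^l A_1(d_j)<L$, and $m=N=\hat{m}$. Since $J_{k,L,\hat{m}}[q^{\hat{m}}]\neq 0$, the two hypotheses of that theorem cannot both hold, so either $k\geq 3\cdot 2^{3-2^l}\cdot d_1\hat{m}$, or $\hat{m}\leq C_l\cdot\prod_{j=1}^l d_j^{2^{j-1-l}}\cdot k^{2^{-l}}\hat{m}^{1-2^{-l}}$, with $C_l=3^{-2^{-l}}\cdot 2^{1-3\cdot 2^{-l}}$. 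In the first case $\varrho(\Psi)=k/\hat{m}\geq 3\cdot 2^{3-2^l}d_1\geq 3\cdot 2^{3-2^l}\prod_{j=1}^l d_j^{-2^{j-1}}$, using $d_j\geq 1$. In the second case, dividing by $\hat{m}^{1-2^{-l}}$ and raising to the power $2^l$ yields $\hat{m}\leq C_l^{2^l}\prod_{j=1}^l d_j^{2^{j-1}}\,k$; since $C_l^{2^l}=2^{2^l-3}/3$, this rearranges to $\varrho(\Psi)=k/\hat{m}\geq 3\cdot 2^{3-2^l}\prod_{j=1}^l d_j^{-2^{j-1}}$. Taking the infimum over all non-zero $\Psi$ gives the asserted lower bound on $\varrho_L$.

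The arithmetic in Step~2 is routine exponent bookkeeping — the only points worth double-checking are the identity $C_l^{2^l}=2^{2^l-3}/3$ and the fact that $d_1\geq\prod_j d_j^{-2^{j-1}}$, so the excluded first case is harmless. The substantive step is Step~1: the symmetric condition is precisely what upgrades ``$\psi_n=0$ for all $n<\hat m$'' to ``$\psi_{\hat m}$ vanishes to order at least $\hat m$ at infinity'', which is what brings Theorem~\ref{th:k<m} into play. I do not anticipate any genuine obstacle beyond keeping the index conventions straight.
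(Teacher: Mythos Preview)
Your proposal is correct and follows essentially the same route as the paper's proof: use the symmetric relation to show that the leading Fourier--Jacobi coefficient $\psi_{\hat m}$ lies in $J_{k,L,\hat m}[q^{\hat m}]$, then negate the hypotheses of Theorem~\ref{th:k<m} and treat the two resulting cases. Your write-up is in fact slightly more explicit than the paper's --- you handle $\hat m=0$ separately, spell out the identity $C_l^{2^l}=2^{2^l-3}/3$, and make the comparison $d_1\geq\prod_j d_j^{-2^{j-1}}$ in the first case explicit --- but the underlying argument is identical.
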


\begin{proof}
Let $\Psi$ be a non-zero element of $\mathbf{FM}_k(\widetilde{\Orth}^+(M))$ that has an expansion of type \eqref{eq:sFJ}. By definition, $\psi_m=0$ for $m<\ord(\Psi)$. When $m=\ord(\Psi)$, $\psi_m$ is not identically zero, and we derive from the symmetric condition that its vanishing order at infinity is at least $m$ as a Jacobi form, that is, $\ord_\infty(\psi_m)\geq m$. If $k\geq 3\cdot 2^{3-2^l}\cdot d_1 m$,  then $\varrho(\Psi) = \frac{k}{m} \geq 3\cdot 2^{3-2^l}d_1$. Otherwise, we deduce from Theorem \ref{th:k<m} that
$$
m \leq 3^{-2^{-l}}\cdot 2^{1-3\cdot 2^{-l}} \cdot  \prod_{j=1}^l d_j^{2^{j-1-l}}\cdot k^{2^{-l}}\cdot m^{1-2^{-l}}, 
$$
which yields that $\varrho(\Psi)=\frac{k}{m}\geq 3\cdot 2^{3-2^l}\cdot \prod_{j=1}^l d_j^{-2^{j-1}}$. This proves the desired bound on $\varrho_L$. 
\end{proof}

We now prove the finiteness of the dimension of the $\CC$-vector space $\mathbf{FM}_k(\widetilde{\Orth}^+(M))$, and estimate the size of $\dim \mathbf{FM}_k(\widetilde{\Orth}^+(M))$ as the weight $k$ goes to infinity. 

\begin{theorem}\label{th:finite-dim}
Let $L$ be an even positive definite lattice of rank $l$. For any integer $k\geq 1$, we have
\begin{equation}\label{eq:Jacobi-estimation}
\dim \mathbf{FM}_k(\widetilde{\Orth}^+(M)) \leq \sum_{m=0}^{[k/\varrho_L]} \dim J_{k,L,m}^{\widetilde{\Orth}(L)}[q^m],    
\end{equation}
where $[x]$ denotes the integer part of $x\in \RR$ as before. In particular, $\dim \mathbf{FM}_k(\widetilde{\Orth}^+(M)) < \infty$.  Moreover, there exists a positive constant $c_L$ depending only on $L$ such that  
$$
\dim \mathbf{FM}_k(\widetilde{\Orth}^+(M)) \leq c_L \cdot k^{l+2}. 
$$ 
\end{theorem}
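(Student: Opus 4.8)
The plan is to bound $\dim \mathbf{FM}_k(\widetilde{\Orth}^+(M))$ by the number of Fourier--Jacobi coefficients one needs to specify before the remaining ones are forced. First I would observe that the evaluation map $\Psi \mapsto (\psi_0,\psi_1,\dots,\psi_M)$, sending a symmetric formal series to its first $M+1$ Fourier--Jacobi coefficients, is injective once $M$ is large enough: if $\psi_m = 0$ for all $m \leq M$ and $\Psi \neq 0$, then $\ord(\Psi) > M$, so $\varrho(\Psi) = k/\ord(\Psi) < k/M$; taking $M = [k/\varrho_L]$ forces $\varrho(\Psi) < \varrho_L$, contradicting the definition of $\varrho_L$ as an infimum of slopes over \emph{all} nonzero symmetric formal series of all weights. (One must check $\varrho_L > 0$ to make $[k/\varrho_L]$ finite; this is exactly the content of Theorem \ref{th:slope}.) Hence the evaluation map into $\bigoplus_{m=0}^{[k/\varrho_L]} J_{k,L,m}^{\widetilde{\Orth}(L)}$ is injective.

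Next I would refine the target. The symmetric condition $f_m(n,\ell) = f_n(m,\ell)$ says that the coefficient of $q^n \zeta^\ell$ in $\psi_m$ equals the coefficient of $q^m\zeta^\ell$ in $\psi_n$. For $m \leq M$ and $n < m$ we have $n < m \leq M$, so $\psi_n$ among our chosen data is already determined; but the symmetry then says that the $q^n$-coefficients of $\psi_m$ with $n < m$ are \emph{not} free — they are dictated by the already-listed $\psi_n$ with $n<m$. Therefore $\psi_m$ is determined by the part of its $q$-expansion supported on $q^n$ with $n \geq m$, i.e.\ by its image in $J_{k,L,m}^{\widetilde{\Orth}(L)}/J_{k,L,m}^{\widetilde{\Orth}(L)}[q^{m}]^{\perp}$ — more precisely, the evaluation $\Psi \mapsto (\psi_0, \psi_1, \dots)$ factors so that, having fixed $\psi_0,\dots,\psi_{m-1}$, the freedom in $\psi_m$ lies in $J_{k,L,m}^{\widetilde{\Orth}(L)}[q^m]$, the space of index-$m$ Jacobi forms vanishing to order $\geq m$ at infinity. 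This gives the inductive bound $\dim \mathbf{FM}_k \leq \sum_{m=0}^{[k/\varrho_L]} \dim J_{k,L,m}^{\widetilde{\Orth}(L)}[q^m]$, which is \eqref{eq:Jacobi-estimation}. Finiteness is then immediate since each summand is finite-dimensional and the sum is finite.

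For the polynomial growth estimate, I would bound each term $\dim J_{k,L,m}^{\widetilde{\Orth}(L)}[q^m]$ uniformly. Using $J_{k,L,m} \subset J_{k,\oplus_{j=1}^l A_1(d_j),m}$ together with Proposition \ref{Prop:weak} and the tensor decomposition \eqref{eq:tensor}, $\dim J_{k,L,m}^{\w}$ is a polynomial in $k$ and $m$ of degree $l$ in $m$ (roughly $\prod_j (m d_j+1)$ copies of spaces of elliptic modular forms, each of dimension $O(k+m)$), hence $\dim J_{k,L,m} = O_L((k+m)\, m^l)$ — in fact $O_L(k\, m^l)$ after passing to holomorphic forms and using that $m \leq k/\varrho_L = O_L(k)$. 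Summing over $m$ from $0$ to $[k/\varrho_L] = O_L(k)$ contributes another factor of $O_L(k)$, giving $\dim \mathbf{FM}_k(\widetilde{\Orth}^+(M)) = O_L(k \cdot k^l \cdot k) = O_L(k^{l+2})$, which is the claimed bound with a suitable constant $c_L$.

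I expect the main obstacle to be the bookkeeping in the second paragraph: pinning down precisely that the residual freedom in $\psi_m$, after $\psi_0,\dots,\psi_{m-1}$ are fixed, is a subspace of $J_{k,L,m}^{\widetilde{\Orth}(L)}[q^m]$ rather than merely of $J_{k,L,m}^{\widetilde{\Orth}(L)}$. This requires carefully tracking which Fourier coefficients $f_m(n,\ell)$ are constrained by the symmetric condition against lower-index data (those with $n < m$) versus which remain genuinely free (those with $n \geq m$), and checking that the constraints are consistent (no over-determination) — the consistency is automatic because it holds for the honest modular forms $\Psi$ we are bounding, but phrasing the dimension count cleanly as a telescoping/inductive argument over $m$ is the delicate point. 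The growth estimate is then routine given the structure theory of weak Jacobi forms recalled in Section \ref{sec:preliminary}.
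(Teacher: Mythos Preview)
Your proposal is correct and takes essentially the same approach as the paper: the paper phrases your inductive step as the exact sequence $0 \to \mathbf{FM}_k[\xi^{m+1}] \to \mathbf{FM}_k[\xi^m] \to J_{k,L,m}^{\widetilde{\Orth}(L)}[q^m]$ (where $\mathbf{FM}_k[\xi^m] := \{\Psi : \ord(\Psi) \geq m\}$), which sidesteps the bookkeeping you worry about since one only ever needs the case $\psi_0 = \cdots = \psi_{m-1} = 0$. For the growth bound the paper cites the estimate $\dim J_{k,L,m} \leq a_L \cdot k \cdot \max\{m^l,1\}$ from \cite{BS23} rather than deriving it from the weak Jacobi form structure theory as you propose, but either route yields the $O_L(k^{l+2})$ bound.
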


\begin{proof}
For any integer $m\geq 0$, we define the subspace
$$
\mathbf{FM}_k(\widetilde{\Orth}^+(M))[\xi^m]:=\{ \Psi \in \mathbf{FM}_k(\widetilde{\Orth}^+(M)) : \ord(\Psi)\geq m \}.  
$$
Then there is an exact sequence
\begin{equation}\label{eq:sequence}
0\longrightarrow \mathbf{FM}_k(\widetilde{\Orth}^+(M))[\xi^{m+1}]\longrightarrow  \mathbf{FM}_k(\widetilde{\Orth}^+(M))[\xi^m] \stackrel{P_r} \longrightarrow J_{k,L,m}^{\widetilde{\Orth}(L)}[q^m],  \end{equation}
where $P_r$ sends $\Psi$ to its leading Fourier--Jacobi coefficient $\psi_m$. By the slope bound,
$$
\mathbf{FM}_k(\widetilde{\Orth}^+(M))[\xi^{m}]=0, \quad \text{if $m> k/\varrho_L$.}
$$
Hence, \eqref{eq:sequence} and Theorem \ref{th:slope} together yield that
$$
\dim \mathbf{FM}_k(\widetilde{\Orth}^+(M)) \leq \sum_{m=0}^{[k/\varrho_L]} \dim J_{k,L,m}^{\widetilde{\Orth}(L)}[q^m]
$$
and the sum is, in fact, a finite sum. By \cite[Equation (13) and Theorem 3.1]{BS23}, there exists a constant $a_L$ depending only on $L$ such that 
$$
\dim J_{k,L,m} \leq a_L \cdot k \cdot \max\{m^l, 1\}, \quad m\geq 0.  
$$
Therefore, by Theorem \ref{th:slope}, we have
$$
\dim \mathbf{FM}_k(\widetilde{\Orth}^+(M)) \leq \sum_{m=0}^{[k/\varrho_L]}\dim J_{k,L,m} \leq a_L\cdot k \cdot \Big( 1+\sum_{m=1}^{[k/\varrho_L]} m^l\Big) \leq c_L \cdot k^{l+2},
$$
where $c_L$ is a certain positive constant depending only on $L$. This completes the proof. 
\end{proof}

We now arrive at the main theorem of this section. 

\begin{theorem}\label{th:algebraic}
As a graded $\mathbf{M}_*(\widetilde{\Orth}^+(M))$-module, $\mathbf{FM}_*(\widetilde{\Orth}^+(M))$ is of finite rank. In particular, the integral domain $\mathbf{FM}_*(\widetilde{\Orth}^+(M))$ is algebraic over its subring $\mathbf{M}_*(\widetilde{\Orth}^+(M))$. That is, for any $\Psi\in \mathbf{FM}_k(\widetilde{\Orth}^+(M))$, there exist positive integers $d$ and $k_0$ and $F_j \in \mathbf{M}_{k_0+jk}(\widetilde{\Orth}^+(M))$ for $0\leq j \leq d$ such that
$$
\sum_{j=0}^d F_j \cdot \Psi^{d-j} = 0 \quad \text{and} \quad F_0 \neq 0. 
$$
\end{theorem}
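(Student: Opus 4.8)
The plan is to prove first that $\mathbf{FM}_*(\widetilde{\Orth}^+(M))$ has finite rank as a graded module over $\mathbf{M}_*(\widetilde{\Orth}^+(M))$, and then to deduce the explicit integral-dependence relation by a standard homogeneity argument on graded domains. Write $R=\mathbf{M}_*(\widetilde{\Orth}^+(M))$ and $N=\mathbf{FM}_*(\widetilde{\Orth}^+(M))$, and let $l$ denote the rank of $L$; then $N_k=\mathbf{FM}_k(\widetilde{\Orth}^+(M))$. Both $R$ and $N$ are graded integral domains with $R\subseteq N$, so $N$ is a torsion-free graded $R$-module. I will use the standard fact that $R$ is a finitely generated graded $\CC$-algebra of Krull dimension $\dim\cD(M)+1=l+3$: it is the ring of sections of an ample $\QQ$-line bundle on the Baily--Borel compactification of $\cD(M)/\widetilde{\Orth}^+(M)$, a normal projective variety of dimension $\dim\cD(M)=l+2$. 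By graded Noether normalization, $R$ contains a graded polynomial subring $S=\CC[\theta_1,\dots,\theta_{l+3}]$ whose generators $\theta_i$ are homogeneous and algebraically independent, of positive weights $a_1,\dots,a_{l+3}$. Since $S\subseteq R$, any family in $N$ that is linearly independent over $R$ is linearly independent over $S$, so $\operatorname{rank}_R N\leq\operatorname{rank}_S N$, and it suffices to bound the latter.

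This rank bound is the heart of the argument, and it rests on the a-priori estimate $\dim_\CC N_k\leq c_L\,k^{l+2}$ of Theorem \ref{th:finite-dim}. For the polynomial ring $S$ a lattice-point count gives
$$
\sum_{j\leq k}\dim_\CC S_j=\#\Big\{(n_1,\dots,n_{l+3})\in\NN^{l+3}:\textstyle\sum_i n_i a_i\leq k\Big\}\sim\frac{k^{l+3}}{(l+3)!\,a_1\cdots a_{l+3}}
$$
as $k\to\infty$. Suppose $\Psi_1,\dots,\Psi_r\in N$ are linearly independent over $S$; replacing each by a suitable homogeneous component we may assume they are homogeneous, of weights $w_1\leq\cdots\leq w_r$. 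Independence means that $\bigoplus_{i=1}^r S\Psi_i$ is a free graded submodule of $N$ isomorphic to $\bigoplus_{i=1}^r S(-w_i)$, whence $\dim_\CC N_k\geq\sum_{i=1}^r\dim_\CC S_{k-w_i}$ for every $k$. Summing over $k\leq M$ and using $w_i\leq w_r$,
$$
\sum_{k\leq M}\dim_\CC N_k\geq r\sum_{j\leq M-w_r}\dim_\CC S_j\sim\frac{r}{(l+3)!\,a_1\cdots a_{l+3}}\,M^{l+3},
$$
whereas $\sum_{k\leq M}\dim_\CC N_k\leq c_L\sum_{k\leq M}k^{l+2}\leq c_L\,M^{l+3}$. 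Letting $M\to\infty$ forces $r\leq(l+3)!\,a_1\cdots a_{l+3}\,c_L$. Hence $\operatorname{rank}_S N<\infty$, so $\rho:=\operatorname{rank}_R N<\infty$, which is the first assertion.

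To extract the polynomial relation, I would argue as follows. Since $N$ is torsion-free of rank $\rho$ over the domain $R$, the $\mathrm{Frac}(R)$-vector space $N\otimes_R\mathrm{Frac}(R)$ has dimension $\rho$, so for $\Psi\in\mathbf{FM}_k(\widetilde{\Orth}^+(M))$ (we may assume $\Psi\neq 0$, the zero case being trivial) the elements $1,\Psi,\dots,\Psi^{\rho}$ are linearly dependent over $\mathrm{Frac}(R)$; clearing denominators gives a non-trivial relation $\sum_{j=0}^{\rho}a_j\Psi^{\rho-j}=0$ with $a_j\in R$. Decomposing each $a_j$ into homogeneous components and collecting the terms of total weight $k_0+\rho k$, the identity $\sum_j (a_j)_{k_0+jk}\,\Psi^{\rho-j}=0$ holds for every $k_0$, and it is non-trivial for a suitable $k_0$ because the original relation is non-trivial and $N$ is a domain. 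Discarding the leading terms with vanishing coefficients and reindexing, we arrive at positive integers $d\leq\rho$ and $k_0$ together with $F_j\in\mathbf{M}_{k_0+jk}(\widetilde{\Orth}^+(M))$ such that $\sum_{j=0}^{d}F_j\,\Psi^{d-j}=0$ and $F_0\neq 0$, as required.

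The main obstacle is the rank bound itself. One cannot simply invoke the finiteness of integral extensions or Noether normalization for $N$, because the finite generation of $N$ over $R$ (as an algebra, equivalently as a module) is essentially the content of the theorem. The argument circumvents this by using only the a-priori bound of Theorem \ref{th:finite-dim}, together with the crucial coincidence that the growth exponent $l+2$ there equals the growth exponent of $\dim_\CC R_k$; consequently each $R$-linearly independent element of $N$ consumes a fixed positive proportion of the available dimension, and the number of such elements is thereby bounded. Everything else — the reduction to the polynomial subring $S$, and the passage from algebraic dependence over $\mathrm{Frac}(R)$ to a homogeneous relation with modular-form coefficients — is routine.
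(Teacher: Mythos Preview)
Your proof is correct and follows the same strategy as the paper: both compare the growth of $\dim_\CC \mathbf{FM}_k$ (bounded by $c_L k^{l+2}$ from Theorem~\ref{th:finite-dim}) against the growth of $\dim_\CC \mathbf{M}_k$, which is of the same order $k^{l+2}$, to conclude that only finitely many elements of $\mathbf{FM}_*$ can be linearly independent over $\mathbf{M}_*$.

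The paper's proof is terser: it simply cites \cite[Proposition~1.2]{GHS07} for the asymptotic $\dim \mathbf{M}_k(\widetilde{\Orth}^+(M))=O(k^{l+2})$ and declares the conclusion. You instead invoke the Baily--Borel compactification to identify the Krull dimension of $R$ as $l+3$, pass to a graded Noether normalization $S\subseteq R$, and carry out the Hilbert-function comparison explicitly over $S$. This is not a genuinely different route but a more self-contained unpacking of the step the paper leaves to the reader; your version has the advantage of making transparent that it is the \emph{lower} bound on the growth of $\dim S_k$ (not merely the upper bound on $\dim R_k$) that drives the argument. You also spell out the passage from finite rank to a homogeneous algebraic relation with $F_0\neq 0$, which the paper omits entirely.
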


\begin{proof}
If $-id \in \widetilde{\Orth}^+(M)$, then $-id \in \widetilde{\Orth}(L)$ and $J_{k,L,m}^{\widetilde{\Orth}(L)}=0$ for any $m\geq 0$ and odd $k$, and therefore $\mathbf{FM}_{k}(\widetilde{\Orth}^+(M)) = 0$ for odd $k$. In this case, $k$ will be assumed to be even in the discussion below.  
On the one hand, by \cite[Proposition 1.2]{GHS07}, $\dim \mathbf{M}_{k}(\widetilde{\Orth}^+(M)) = O(k^{l+2})$, as $k$ goes to infinity. On the other hand, by Theorem \ref{th:finite-dim}, $\dim \mathbf{FM}_{k}(\widetilde{\Orth}^+(M)) = O(k^{l+2})$, as $k$ goes to infinity. We conclude from the two facts that the rank of the module $\mathbf{FM}_*(\widetilde{\Orth}^+(M))$ over $\mathbf{M}_*(\widetilde{\Orth}^+(M))$ is finite, that is, there are only finitely many elements of $\mathbf{FM}_*(\widetilde{\Orth}^+(M))$ that are linearly independent over $\mathbf{M}_*(\widetilde{\Orth}^+(M))$.
\end{proof}

This theorem is the basis for proving the modularity of symmetric formal Fourier-Jacobi series (see \cite[Theorem 4.5]{BR15} and \cite[Proposition 4.7]{AIP24}). We hope to use this theorem to prove Conjecture \ref{conj} in the near future. 

\begin{remark}
Let $\varrho_n$ denote the minimal slope bound for Siegel modular forms on $\Sp_{2n}(\ZZ)$. A positive lower bound on $\varrho_n$ was known 50 years ago (see \cite{Eic75} and \cite[Section 2.5]{BR15}). The exact value of $\varrho_n$ is only known for $n\leq 5$. More precisely, $\varrho_1=12$,  $\varrho_2=10$, $\varrho_3=9$, $\varrho_4=8$, $\varrho_5=54/7$. 
\end{remark}

\begin{remark}
In \cite[Section 6]{WW24}, the second named author and Williams classified $40$ root lattices $L$ for which the inequality \eqref{eq:Jacobi-estimation} degenerates into an equality (see \cite[Table 4]{WW24}). In this way, it was shown that Conjecture \ref{conj} holds for these lattices. Let $L$ be such a root lattice. By \cite[Lemma 6.2 and Theorem 6.8]{WW24}, for every weak Jacobi form $\psi_m$ of weight $k$ and index $m$ for $L$ that is invariant under $\widetilde{\Orth}(L)$ (denote the space of such forms by $J_{k,L,m}^{\w, \widetilde{\Orth}(L)}$), there exists a modular form $F$ of weight $k+12m$ on $\widetilde{\Orth}^+(2U\oplus L)$ such that $\ord(F)=m$ and its leading Fourier--Jacobi coefficient is $\Delta^m\cdot \psi_m$.  It follows that the minimal slope bound $\varrho_L$ is given by
$$
\varrho_L = \min\Big\{ 12 - \frac{k}{m} : \; \text{$k,m>0$ such that there exists a non-zero $\phi \in J_{-k,L,m}^{\w, \widetilde{\Orth}(L)}$} \Big\}. 
$$
Recall that the bigraded ring 
$$
J_{*,L,*}^{\w,\widetilde{\Orth}(L)}=\bigoplus_{k\in\ZZ}\bigoplus_{m\in\NN} J_{k,L,m}^{\w,\widetilde{\Orth}(L)}
$$
is finitely generated over $M_*(\SL_2(\ZZ))$, and the generators are known (see, e.g., \cite[Section 3.3]{WW24}). Therefore, it is easy to calculate $\varrho_L$ for these lattices $L$. For example, for irreducible root lattices $L=A_n$ with $1\leq n \leq 7$, we have $\varrho_L = 11-n$; for irreducible root lattices $L=D_n$ with $4\leq n\leq 8$, we have $\varrho_L=12-n$; for $L=E_6$, we have $\varrho_L=7$; for $L=E_7$, we have $\varrho_L=7$; for $L=A_2\oplus E_6$, we have $\varrho_L=4$; for $L=3A_2$, we have $\varrho_L=3$. In addition, by \cite[Theorem 6.13]{WW24}, for $L=A_1(n)$ with $1\leq n\leq 4$, we have $\varrho_L=12-2n$. Remark that the case of $L=A_1(n)$ corresponds to Siegel paramodular forms of degree two and level $n$.  
\end{remark}

\begin{remark}
Assume that $M$ is not of type $2U\oplus L$. If $M$ contains an isotropic plane, then modular forms on $\widetilde{\Orth}^+(M)$ also have Fourier--Jacobi expansions, while the Fourier--Jacobi coefficients will be holomorphic Jacobi forms on congruence subgroups of $\SL_2(\ZZ)$. By Theorem \ref{th:k<m}, it is not hard to generalize Theorems \ref{th:slope}, \ref{th:finite-dim} and \ref{th:algebraic} to this case. 
\end{remark}

\bibliographystyle{plainnat}
\bibliofont
\bibliography{refs}

\end{document}